\author{\Large{Damanvir Singh Binner and Amarpreet Rattan}}
\documentclass[12pt]{article}
\usepackage[margin=1in]{geometry}
\usepackage{amsfonts,amsmath,amssymb, amsthm, tikz}
\usepackage{xcolor,colortbl}
\usepackage[notcite, notref,color,final]{showkeys}
\usepackage[flushleft]{threeparttable}
\usepackage{hyperref}
\definecolor{refkey}{rgb}{0,0,1}
\definecolor{labelkey}{rgb}{0,0,1}
\usepackage{fancyvrb}

\usepackage{lineno}
\newtheorem{theorem}{Theorem}
\newtheorem{conjecture}[theorem]{Conjecture}
\newtheorem{lemma}[theorem]{Lemma}
\newtheorem{corollary}[theorem]{Corollary}

\theoremstyle{remark}
\newtheorem*{remark}{Remark}

\newcommand{\pihat}{\hat{\pi}}

\begin{document}
\title{\Large{On Conjectures Concerning the Smallest Part and Missing Parts of Integer Partitions}}

\date{}
\maketitle
\newcommand{\cmagenta}{\color{black}}
\newcommand{\cred}{\color{black}}
\newcommand{\cskyblue}{\color{skyblue}}
\newcommand{\cnavyblue}{\color{navyblue}}
\newcommand{\cpurple}{\color{black}}
\newcommand{\cgreen}{\color{green}}
\newcommand{\cbrown}{\color{brown}}
\newcommand{\corange}{\color{black}}
\newcommand{\cyellow}{\color{yellow}}
\newcommand{\freq}{f}

\begin{center}
\vspace*{-8mm}
\large{Department of Mathematics\\
 Simon Fraser University\\
 Burnaby, BC V5A 1S6 \\
 Canada\\
dbinner@sfu.ca\\
rattan@sfu.ca}
\end{center}

\begin{center}
\large{Abstract}
\end{center} 
For positive integers $L \geq 3$ and $s$,  Berkovich and Uncu (Ann. Comb. $23$ ($2019$) $263$--$284$)
 conjectured an inequality between the sizes of two closely related sets of partitions whose parts lie in the interval $\{s,
\ldots, L+s\}$.  Further restrictions are placed on the sets by
specifying impermissible parts as well as a minimum part.  The authors proved their conjecture for the
cases $s=1$ and $s=2$.  In the present article, we prove their conjecture for
general $s$ by proving a stronger theorem.  We also prove other related conjectures found in the same paper.

\section {Introduction}
\label{Intro}

Let $n$ be a nonnegative integer. A \emph{partition $\pi = (\pi_1,
\pi_2, \dots)$ of $n$} is a weakly decreasing list of positive
integers whose sum is $n$, and we write $|\pi| = n$ to indicate this.  We
allow the empty partition as the unique partition of $0$.  Each $\pi_i$
is known as a \emph{part} of $\pi$.  In the present article, it is
more convenient to use the notation that expresses the number of parts
of each size in a partition.   In this notation, we write $\pi = (1^{f_1}, 2^{f_2},
\ldots)$, where $f_i$ is the \emph{frequency} of $i$ or the number of times a
part $i$ occurs in $\pi$.   Thus, each frequency $f_i$ is a nonnegative integer, and when
$f_i = 0$ this expresses that $\pi$ has no part of size $i$.  When the frequency
of a number is 0, it may or may not be omitted in the expression.  In the latter
notation, it is clear that $|\pi| = \sum_i i \cdot f_i$.   Thus $(4,4,2,2,1)$,
$(1^1, 2^2, 3^0, 4^2, 6^0)$ and $(1^1, 2^2, 4^2, 5^0)$ all represent the same
partition of 13.

In \cite{BerkovichAlexander2017SEPI}, Berkovich and Uncu
conjectured some intriguing partition inequalities regarding the relative sizes of certain
sets of partitions.  We recall their definitions. For positive integers $L$ and $s$, 
\begin{itemize}
	\item $C_{L,s}$ denotes the set of partitions where the smallest
		part is $s$, all parts are $\leq L+s$, and $L+s-1$ does not
		appear as a part; 
	\item $D_{L,s}$  denotes the set of nonempty partitions
		with parts in the set $\{s+1,  \ldots,
		L+s\}$. 
	\end{itemize}
Conjecture \ref{Main} is found in \cite[Conjecture
3.2]{BerkovichAlexander2017SEPI}, and one focus of this article
is to prove it.
\label{page1}
\begin{conjecture}[Berkovich and Uncu (2019)]
\label{Main}
For positive integers $L \geq 3$ and $s$, there exists an $M$,
which only depends on $s$, such that $$ |\{\pi \in C_{L,s} : |\pi| =
N \}| \geq |\{\pi \in D_{L,s} : |\pi| = N\}|,$$ for every $N \geq
M$.
\end{conjecture}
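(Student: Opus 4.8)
The plan is to recast the conjecture as a statement about partitions with two fixed part sets and then prove a sharp, explicit form of it --- the ``stronger theorem'' --- by exhibiting an injection.

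First I would pass to generating functions. With $c(N)=|\{\pi\in C_{L,s}:|\pi|=N\}|$ and $d(N)=|\{\pi\in D_{L,s}:|\pi|=N\}|$, one reads off
$$\sum_{N} c(N)q^{N}=\frac{q^{s}}{(1-q^{s})(1-q^{L+s})}\prod_{j=s+1}^{L+s-2}\frac{1}{1-q^{j}},\qquad \sum_{N} d(N)q^{N}=\prod_{j=s+1}^{L+s}\frac{1}{1-q^{j}}-1,$$
and these combine into the identity
$$\sum_{N}\bigl(c(N)-d(N)\bigr)q^{N}=1+\frac{2q^{s}-1-q^{L+2s-1}}{\prod_{j=s}^{L+s}(1-q^{j})}.$$
Deleting one part equal to $s$ also gives a bijection showing that $c(N)$ is the number of partitions of $N-s$ with all parts in $A:=\{s,s+1,\ldots,L+s-2,L+s\}$, while $d(N)$ counts partitions of $N$ with all parts in $B:=\{s+1,\ldots,L+s\}$. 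The target statement then becomes: for every $L\ge 3$ and every $N\ge M(s)$ there is an injection $\Phi$ from the partitions of $N$ with parts in $B$ to the partitions of $N-s$ with parts in $A$, with $M$ depending on $s$ alone.

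I would build $\Phi$ from a few size-changing local moves whose purpose is to remove the value $L+s-1$ (permitted in $B$, forbidden in $A$) and to shed a total of $s$ from the part multiset while staying inside $A$. Typical moves are: $(L{+}s{-}1)+(L{+}s{-}1)\rightsquigarrow (L{+}s)+(L{+}s{-}2)$, which clears pairs of forbidden parts at no change of size; $(L{+}s{-}1)\rightsquigarrow (L{-}1)$, legal precisely when $L\ge s+1$, which clears a leftover forbidden part and sheds $s$ at once; for any part $p\ge 2s$, the move $p\rightsquigarrow p-s$; and, when all parts are small, merging $p,q\in\{s+1,\ldots,2s-1\}$ into the single part $p+q-s$, or splitting that into $s+(p{+}q{-}2s)$ when it overshoots the allowed range. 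Organizing the argument by the number of parts equal to $L+s-1$ and by whether $L\ge s+1$ or $3\le L\le s$ should cover every $\mu$: in the regime $L\le s$ every part lies in $[s+1,2s]$, so a partition of $N$ has at least $N/(2s)$ parts, and for $N\ge M(s)$ this abundance of parts provides the room needed for the merging moves and their occasional three-part variants. Making each move act on a canonical choice of parts --- for definiteness, always the largest eligible ones --- should keep $\Phi$ invertible on its image.

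I expect the main obstacle to be twofold. First, injectivity: with several move types interacting one must check that the output of $\Phi$ records unambiguously which move was applied and where, so that $\Phi$ can be reversed. Second, and more delicate, uniformity in $L$ together with an explicit $M(s)$: one must show that the finitely many degenerate configurations on which no move is available --- too few parts to merge, or all parts equal to a common value for which every targeted sum is forced onto the forbidden value $L+2s-1$ --- can occur only when $N$ is bounded by an explicit (roughly quadratic) function of $s$, so that taking $M(s)$ past this bound removes them, after which the remaining finitely many small values of $L$ and $N$ are checked directly. As reassurance that the threshold can be made independent of $L$ at all, for each fixed $L\ge 3$ one has $c(N)/d(N)\to (L+s-1)/s>1$ as $N\to\infty$, so the inequality certainly holds eventually; the real content of the theorem, and the point of the construction, is that $M$ need not grow with $L$.
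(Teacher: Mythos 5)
Your reformulation is sound: the generating-function identity is correct, and recasting the problem as an injection from partitions of $N$ with parts in $B=\{s+1,\dots,L+s\}$ into partitions of $N-s$ with parts in $A=\{s,\dots,L+s\}\setminus\{L+s-1\}$ is exactly the combinatorial content of the conjecture, and an injection is also how the paper proceeds. But the proposal stops at the two points you yourself flag as "the main obstacle," and at least one of your proposed moves demonstrably fails in the regime that carries all the difficulty. When $3\le L\le s$, every part of $\mu$ lies in $[s+1,L+s]\subseteq[s+1,2s]$, the move $(L+s-1)\rightsquigarrow(L-1)$ is illegal, and merging two parts $p,q$ gives $p+q-s\in[s+2,3s]$, which for small $L$ and large $s$ (e.g.\ $L=3$) routinely exceeds $L+s$ while being strictly less than $2s$, so it can be realized neither as one allowed part nor as a sum of two allowed parts (each of which must be $\ge s$). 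One is then forced into rewriting the sum of \emph{many} parts as a nonnegative combination of allowed part sizes, which is governed by Frobenius-number considerations; this is precisely where the paper invokes Sylvester's theorem, fixes one canonical representation for each relevant total, and --- crucially --- tags every case by the multiplicity of the part $s$ in the image so that the composite map can be inverted. Your "always act on the largest eligible parts" convention does not by itself supply such a certificate: two different source partitions processed by different move types can a priori collide, and nothing in the output records which rewrite was used.

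The second gap is quantitative. You assert that the degenerate configurations die out once $N$ exceeds a roughly quadratic function of $s$, but give no argument, and the multi-part rewrites just described require thresholds coming from Frobenius numbers of several coprime pairs built from $s$, iterated over the case analysis; the paper's resulting bound $\Gamma(s)=\gamma(3s+2,s)$ for the small-$L$ regime is of order $(6s)^{(6s)^{18s}}$, not $s^2$. (No optimality is claimed there, so a quadratic bound is not ruled out, but it is not established by anything in your outline.) The paper also splits the proof differently from what you sketch: for $L$ large relative to $s$ (roughly $L\ge s+3$, and separately $L\ge 3s+3$ for small impermissible parts) it gives injections with bounds $\kappa(s),\kappa'(s),\kappa''(s)$ depending only on $s$, and for the bounded range $3\le L\le 3s+2$ it proves a version with a bound $\gamma(L,s)$ depending on both parameters, then takes the maximum over the finitely many remaining $L$. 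Your closing asymptotic sanity check $c(N)/d(N)\to(L+s-1)/s$ is correct but, as you note, only gives eventual positivity for each fixed $L$, not uniformity in $L$. In short: right framework, but the injection is not actually constructed where it matters, and the move set as described cannot be completed without importing the Sylvester-type machinery and case-tagging that constitute the bulk of the paper's proof.
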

They proved in \cite[Theorem 1.1, Theorem 3.1]{BerkovichAlexander2017SEPI}
Conjecture \ref{Main} for $s=1$ (with bound $M=1$) and $s=2$ (with bound
$M=10$).     In both cases, the authors found a suitable injection.  Conjecture
\ref{Main} is therefore a natural generalization of those theorems.  Their
investigations suggested further conjectures, three of which we give below.  To
state the first, for positive integers $L$ and $s$, if $L \geq s+1$, 
\begin{itemize}
\item  $C^{*}_{L,s}$ denotes the set of
		partitions where the smallest part is $s$, all
		parts are $\leq L+s$, and $L$ does not appear as a
		part.
\end{itemize}
The next conjecture is found in \cite[Conjecture
3.3]{BerkovichAlexander2017SEPI}.
\begin{conjecture}[Berkovich and Uncu (2019)] \footnote{{\cmagenta For recent progress on
		both Conjectures \ref{Main} and \ref{Similar}, see Section
\ref{sec:recent}.}}
    \label{Similar}
    For positive integers $L \geq 3$ and $s$, there exists an $M$,
    which only depends on $s$, such that 
    \begin{equation*}
        |\{\pi \in C^{*}_{L,s} : |\pi| = N \}| \geq |\{\pi \in D_{L,s} :
        |\pi| = N\}|,
    \end{equation*}
	for every $N \geq M$.
\end{conjecture}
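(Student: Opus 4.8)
The plan is to deduce Conjecture~\ref{Similar} from the (stronger) inequality we establish for Conjecture~\ref{Main}, via an elementary generating-function manipulation. Abbreviate $C_{L,s}(N)=\{\pi\in C_{L,s}:|\pi|=N\}$ and similarly for the other families, set $P=P_{L,s}(q)=\prod_{j=s+1}^{L+s}(1-q^{j})^{-1}$, and let $E_{L,s}$ denote the set of partitions whose smallest part is exactly $s$ and whose parts are all $\le L+s$. Deleting one copy of the forbidden part sets up bijections giving
\[
\sum_{\pi\in C_{L,s}}q^{|\pi|}=(1-q^{L+s-1})\,\frac{q^{s}}{1-q^{s}}\,P,
\qquad
\sum_{\pi\in C^{*}_{L,s}}q^{|\pi|}=(1-q^{L})\,\frac{q^{s}}{1-q^{s}}\,P ,
\]
each being $\sum_{\pi\in E_{L,s}}q^{|\pi|}=\frac{q^{s}}{1-q^{s}}P$ minus the image of those partitions containing the relevant forbidden part. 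Subtracting, and writing $\overline{D}_{L,s}=D_{L,s}\cup\{\varnothing\}$,
\[
\sum_{\pi\in C^{*}_{L,s}}q^{|\pi|}-\sum_{\pi\in C_{L,s}}q^{|\pi|}
=-q^{L+s}\,\frac{1-q^{s-1}}{1-q^{s}}\,P ,
\]
so $|C^{*}_{L,s}(N)|=|C_{L,s}(N)|-[q^{N}]\,q^{L+s}\frac{1-q^{s-1}}{1-q^{s}}P$; note this already recovers $C^{*}_{L,1}=C_{L,1}$, hence the case $s=1$.

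First I would split $\frac{1-q^{s-1}}{1-q^{s}}=\frac{s-1}{s}+\frac{(1-q)\,g(q)}{1+q+\cdots+q^{s-1}}$, where the constant is forced by evaluating at $q=1$ and $g$ is an explicit polynomial of degree $<s$ (a one-line computation shows the remainder has a double zero at $q=1$, so the displayed $(1-q)$ is genuine). Multiplying by $q^{L+s}P$ and extracting coefficients,
\[
[q^{N}]\,q^{L+s}\,\tfrac{1-q^{s-1}}{1-q^{s}}\,P
=\tfrac{s-1}{s}\bigl|\overline{D}_{L,s}(N-L-s)\bigr|+R_{L,s}(N),
\qquad
R_{L,s}(N)=[q^{N}]\,q^{L+s}\tfrac{(1-q)g(q)}{1+q+\cdots+q^{s-1}}\,P .
\]
Because the $(1-q)$ cancels one of the $L$ poles of $P$ at $q=1$, the term $R_{L,s}(N)$ is $O(N^{L-2})$ and, crucially, is bounded by a degree-$(L-2)$ polynomial in $N$ whose coefficients are controlled \emph{uniformly in $L$}. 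Thus Conjecture~\ref{Similar} is equivalent to
\[
|C_{L,s}(N)|-|D_{L,s}(N)|\;\ge\;\tfrac{s-1}{s}\bigl|\overline{D}_{L,s}(N-L-s)\bigr|+R_{L,s}(N)
\]
for all $N$ past a threshold depending only on $s$.

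I would then invoke our strengthening of Conjecture~\ref{Main} to bound the left side below, its dominant term being $\frac{(s-1)!\,(L-1)}{(L-1)!\,(L+s)!}\,N^{L-1}$. On the right, the asymptotics of $P$ give $\bigl|\overline{D}_{L,s}(M)\bigr|\sim\frac{s!}{(L-1)!(L+s)!}M^{L-1}$, so $\tfrac{s-1}{s}\bigl|\overline{D}_{L,s}(N-L-s)\bigr|$ has dominant term $\frac{(s-1)!\,(s-1)}{(L-1)!\,(L+s)!}\,N^{L-1}$, and the difference of the two dominant terms is $\frac{(s-1)!\,(L-s)}{(L-1)!\,(L+s)!}\,N^{L-1}$, which is strictly positive exactly because $L\ge s+1$; this inequality $L>s$ plays here the role that $L>1$ played for Conjecture~\ref{Main}. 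The main obstacle is to get a threshold $M$ depending on $s$ alone: one must show this positive leading term dominates, uniformly over all $L\ge\max(3,s+1)$, the sum of $R_{L,s}(N)$ together with every sub-leading contribution (from the lower-order poles of $P$ at $q=1$ and from its poles at other roots of unity) appearing on either side. This is precisely the $L$-uniform error control underlying the proof of the stronger theorem, which I would reuse with only cosmetic changes; alternatively, one can run the same residue analysis directly on $(1-q^{L})\frac{q^{s}}{1-q^{s}}P-\sum_{\pi\in D_{L,s}}q^{|\pi|}$, bypassing Conjecture~\ref{Main} entirely.
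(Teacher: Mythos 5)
Your generating-function identity is correct: since $C_{L,s}=I_{L,s,L+s-1}$ and $C^{*}_{L,s}=I_{L,s,L}$, one indeed gets $\sum_{\pi\in C^{*}_{L,s}}q^{|\pi|}-\sum_{\pi\in C_{L,s}}q^{|\pi|}=-q^{L+s}\tfrac{1-q^{s-1}}{1-q^{s}}P$, and your leading-order coefficients check out. But the reduction has two genuine gaps. First, you propose to ``invoke our strengthening of Conjecture \ref{Main}'' to lower-bound $|C_{L,s}(N)|-|D_{L,s}(N)|$ by its asymptotic dominant term $\tfrac{(s-1)!(L-1)}{(L-1)!(L+s)!}N^{L-1}$. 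The strengthening proved in the paper (Theorem \ref{Large}, and more generally Theorem \ref{MostGen}) is a purely injective argument yielding only strict positivity, i.e.\ a lower bound of $1$; it supplies no quantitative lower bound of order $N^{L-1}$, and no ``$L$-uniform error control'' or residue analysis exists in that proof for you to reuse. You would have to build the entire asymptotic machinery (contributions of all poles of $P$ at roots of unity, with explicit constants) from scratch, and that is the hard part, not a cosmetic change.

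Second, and more fundamentally, a comparison of leading terms in $N^{L-1}$ cannot produce a threshold $M$ depending on $s$ alone. For fixed $N$ and $L\to\infty$ (say $L+s>N$), the quantities $|C_{L,s}(N)|$, $|C^{*}_{L,s}(N)|$ and $|D_{L,s}(N)|$ all stabilize, while your ``dominant term'' $N^{L-1}\cdot\tfrac{(s-1)!(L-s)}{(L-1)!(L+s)!}$ tends to $0$ super-exponentially; in that regime it dominates nothing, and any threshold extracted from polynomial-in-$N$ asymptotics necessarily depends on $L$. This is precisely why the paper splits the proof of Theorem \ref{MostGen} into a large-$L$ regime ($L\ge 3s+3$, handled by direct injections in Theorems \ref{Largek} and \ref{LargeL_Smallk}) and a bounded-$L$ regime ($L\le 3s+2$, Theorem \ref{General}, where the $L$-dependent bound $\gamma(L,s)$ can be replaced by $\gamma(3s+2,s)=\Gamma(s)$ only because $L$ is bounded in terms of $s$). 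Conjecture \ref{Similar} then follows as the special case $k=L$ of Theorem \ref{MostGen} (Corollary \ref{thm:proofconj3}); your route, as it stands, does not close either gap.
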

In the definition of $C^{*}_{L,s}$, we must have $L \geq s+1$, so the inclusion
of $L \geq 3$ in the conjecture is to exclude the case $L = 2$ and $s=1$. 

Conjectures \ref{Main} and \ref{Similar} are part of a broader body of
recent work concerning sets of partitions whose parts come from some interval.  See for
example \cite{abr, berkbook, chapman}.   While we further resolve additional
related conjectures from \cite{BerkovichAlexander2017SEPI} below, there are a number
of other research directions suggested in that article that we do not pursue here.  

While Conjectures \ref{Main} and \ref{Similar} motivated our work, we in fact
prove a stronger result.  For positive integers $L$, $s$ and $k$, with $s+1 \leq k \leq L+s$,
\begin{itemize}
	\item {\corange $I_{L,s,k}$} is the set of
partitions where the smallest part is $s$, all parts are $\leq L+s$, and $k$ does
not appear as a part.
\end{itemize}
Whenever a part cannot occur from a range of allowable parts, as with $k$ in the
definition of {\corange $I_{L,s,k}$}, we refer to that as an \emph{impermissible part}.  
The sets $C_{L,s}$ and $C^{*}_{L,s}$ above are the special cases of
{\corange $I_{L,s,k}$} given by
{\corange $I_{L,s,L+s-1}$} and {\corange $I_{L,s,L}$}, respectively.  Thus, the parameter $k$ allows us to deal
with impermissible parts in the set $\{s+1, \ldots, L+s\}$ collectively. The
next theorem generalizes Conjectures \ref{Main} and \ref{Similar}.   
\begin{theorem}
\label{MostGen}
For positive integers $L$, $s$ and $k$, with $L \geq 3$ and $s+1 \leq k \leq
L+s$, we have    
\begin{equation*}
	|\{\pi \in {\corange  I_{L,s,k}} : |\pi| = N \}| > |\{\pi \in D_{L,s} : |\pi| =
        N\}|,
    \end{equation*}
for all $N \geq \Gamma(s)$, where $\Gamma(s)$ is defined in \eqref{eq:defgammas}.
\end{theorem}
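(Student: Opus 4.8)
\emph{Step 1 (reduction).} I would first strip Theorem~\ref{MostGen} down to a single inequality between two partition counts. A partition counted on the left of Theorem~\ref{MostGen} has, by the definition of $I_{L,s,k}$, at least one part equal to $s$; deleting one such part is a bijection onto the partitions of $N-s$ all of whose parts lie in $\{s,s+1,\dots,L+s\}\setminus\{k\}$. On the right, for $N\ge 1$ the set $\{\pi\in D_{L,s}:|\pi|=N\}$ is precisely the set of partitions of $N$ with all parts in $\{s+1,\dots,L+s\}$. Writing
\begin{align*}
R_m&=\#\{\text{partitions of }m\text{ with parts in }\{s,\dots,L+s\}\setminus\{k\}\},\\
p_N&=\#\{\text{partitions of }N\text{ with parts in }\{s+1,\dots,L+s\}\},
\end{align*}
the theorem is equivalent to the statement that $R_{N-s}>p_N$ for every $N\ge\Gamma(s)$; the standard ``add a part'' bijections rewrite this once more as $2Q_{N-s}>Q_N+Q_{N-s-k}$, where $Q_m$ counts partitions of $m$ with parts in $\{s,\dots,L+s\}$, using $R_{N-s}=Q_{N-s}-Q_{N-s-k}$ and $p_N=Q_N-Q_{N-s}$. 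The direction of the inequality is not mysterious: the admissible part-sets behind $R$ and $p$ each have $L$ elements, with products of parts $s(s+1)\cdots(L+s)/k$ and $(s+1)\cdots(L+s)$, in the ratio $s/k<1$; hence the degree-$(L-1)$ quasipolynomials $N\mapsto R_{N-s}$ and $N\mapsto p_N$ have leading coefficients in the ratio $k/s>1$, so $R_{N-s}>p_N$ for all sufficiently large $N$. The content of Theorem~\ref{MostGen} is that ``sufficiently large'' can be taken to depend on $s$ alone, uniformly over $L\ge 3$ and $k$.

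\emph{Step 2 (the injection).} I would prove $R_{N-s}>p_N$ by constructing an injection $\Phi$ from the $p_N$-set into the $R_{N-s}$-set that is never onto. The workhorse move is: if $\tau=(\tau_1\ge\cdots\ge\tau_\ell)$ has all parts in $\{s+1,\dots,L+s\}$ and $\ell\ge s$, decrease each of the $s$ smallest parts of $\tau$ by $1$; the output is a partition of $N-s$ whose parts still lie in $\{s,\dots,L+s\}$ (the smallest part merely drops from some $b\ge s+1$ to $b-1\ge s$), and the move is injective with an easily described image, since the $s$ decreased entries remain the $s$ smallest parts of the output and are restored by adding $1$ back. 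Whenever this decrease, or $\tau$ itself, produces the impermissible value $k$, one reroutes --- decreasing a different block of $s$ parts, or first peeling a part $k$ off of $\tau$ and redepositing its weight elsewhere --- arranging the branches to have pairwise disjoint images; the freedom in choosing \emph{which} $s$ parts to decrease is what makes the rerouting possible. Non-surjectivity, and hence the strict inequality, then follows by exhibiting one partition in the $R_{N-s}$-set outside the image of $\Phi$, for instance a suitable partition with several parts equal to $s$.

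\emph{Step 3 (uniformity in $L$, and the bound $\Gamma(s)$).} The main obstacle is the part cap $L+s$, which is why an explicit $\Gamma(s)$ as in \eqref{eq:defgammas} is needed rather than merely ``$N$ large.'' When $N\le L+s$ the cap is inactive on every partition in sight, so both sides become $L$-free: the claim reduces to an inequality about partitions with parts $\ge s$ avoiding $k$, and the workhorse move of Step~2 (which never raises a part) settles this once $N$ exceeds an explicit $\Gamma_1(s)$ depending only on $s$. When $N>(s-1)(L+s)$ every partition in the $p_N$-set automatically has at least $s$ parts, so the workhorse move applies to all of them and only the $k$-rerouting remains. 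The real difficulty is the intermediate band $L+s<N\le(s-1)(L+s)$ --- empty when $s\le 2$, which is why those cases were already accessible --- where a partition in the $p_N$-set may have as few as $2$ parts, the cap is active, and neither argument applies verbatim; this band requires a finer case analysis on the number of parts, and is where the bookkeeping is heaviest. Finally, a few genuinely exceptional low-lying configurations force the threshold upward --- such as a one-part partition $(N)$ with $N$ just above $s$, for which no partition of $N$ lies in $I_{L,s,k}$ at all, and small equality cases such as $R_8=p_{10}$ when $s=2$, $k=3$ and $L\ge 8$ --- and these, together with $\Gamma_1(s)$ and the thresholds from the two non-trivial regimes, are exactly what is assembled into $\Gamma(s)$.
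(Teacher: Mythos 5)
Your Step 1 reformulation is correct (it is exactly the generating-function identity behind \eqref{eq:coeffHdef}), and you have correctly located the two sources of difficulty: the impermissible part $k$ and the uniformity of the bound in $L$. But what you have written is a plan, not a proof, and the plan is missing precisely the ideas that constitute the paper's argument. First, the rerouting when $k$ occurs. Removing the $f_k$ copies of $k$ from $\pi$ frees up weight $k f_k$, which must be redeposited as some parts of size $s$ plus other admissible parts so that the total is preserved \emph{and} the map remains injective. ``Redepositing its weight elsewhere'' is the entire content of the paper's Theorems \ref{Large}, \ref{Largek} and \ref{LargeL_Smallk}: one needs Sylvester's theorem (Lemma \ref{Frobenius}) and Lemma \ref{Simple} to solve equations such as $kj = s(2j-1) + \sum_i i\, r_{i,j}$ in nonnegative integers, with the coefficient of $s$ chosen so that the frequency of $s$ in the image serves as a signature identifying which case the preimage came from (this is what Tables \ref{tab:tab1} and \ref{tab:tab2} record). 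Your workhorse move gives you no control over the frequency of $s$ in the output (it produces parts $s$ only from parts $s+1$), so you have no mechanism for making the branches' images disjoint; asserting that ``the freedom in choosing which $s$ parts to decrease'' suffices is not an argument.

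Second, your trichotomy on $N$ versus $L$ does not close. In the regime $N\le L+s$ the cap is indeed inactive, but partitions in the $p_N$-set can have fewer than $s$ parts (e.g.\ $(N)$ itself), so the workhorse move does not ``settle this''; and you explicitly concede the intermediate band $L+s<N\le (s-1)(L+s)$, which is nonempty for every $s\ge 3$ and is where the paper does its heaviest work. The paper's resolution is structurally different: it splits on $L$ versus $s$ rather than on $N$ versus $L$. For $L\ge 3s+3$ it runs two explicit injections with bounds $\kappa'(s)$ and $\kappa''(s)$ (Theorems \ref{Largek} and \ref{LargeL_Smallk}, split on whether $k\ge 2s+2$ or $k\le 2s+1$); for $3\le L\le 3s+2$ it uses a third injection (Theorem \ref{General}) whose bound $\gamma(L,s)$ depends on $L$, and then observes that boundedness of $L$ by $3s+2$ converts this into the $s$-only bound $\Gamma(s)=\gamma(3s+2,s)$ of \eqref{eq:defgammas}, which dominates $\kappa'(s)$ and $\kappa''(s)$. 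Until you (i) construct the redistribution of the weight $kf_k$ explicitly, (ii) supply an invariant of the image that separates the branches, and (iii) handle the intermediate band rather than defer it, the proposal does not prove the theorem.
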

At this point, the precise value of $\Gamma(s)$ is not important.  We have, however,
stated Theorem \ref{MostGen} with the constant $\Gamma(s)$ inserted to
emphasize that it is explicitly known and only depends on $s$.  It also allows
us to easily reference this bound when using the partition inequality presented in Theorem
\ref{MostGen} to prove other results.  We prove Theorem \ref{MostGen} in Section \ref{sec:sim}. While our methods are elementary and involve constructing injective maps between the
relevant sets, they entail analyzing many cases.

For the remaining conjectures of Berkovich and Uncu considered in the present
article, define
\begin{itemize}
    \item the \emph{$q$-Pochhammer symbol} by $$(a;q)_n := (1-a)(1-aq)(1-aq^2)
        \cdots (1-aq^{n-1}),$$
		for an integer $n \geq 1$, with {\cmagenta $(a;q)_0 := 1$};
\item the series $H_{L,s,k}(q)$ by
    \begin{equation*}
        H_{L,s,k}(q) := \frac{q^s(1-q^k)}{(q^s;q)_{L+1}} -
        \left(\frac{1}{(q^{s+1};q)_L}-1\right),
    \end{equation*}
    for positive integers $L,s$ and $k$. 
\end{itemize}
A series $\sum_{n \geq 0} a_n q^n$ is said to be \emph{eventually
positive} if there exists some $l \in \mathbb{N}$ such that $a_n > 0$ for all $n \geq l$.  
The next conjecture is found in \cite[Conjecture 7.1]{BerkovichAlexander2017SEPI}.
\begin{conjecture}[Berkovich and Uncu (2019)] \footnote{{\cmagenta See Footnote 1, Page
	\pageref{page1}}}
    \label{Analytic}
    For positive integers $L$, $s$ and $k$, with $L \geq 3$ and $k \geq s+1$, the series $H_{L,s,k}(q)$ is eventually positive.
\end{conjecture}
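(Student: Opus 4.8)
The plan is to recognise $H_{L,s,k}(q)$ as a difference of partition generating functions and then read off the statement from Theorem~\ref{MostGen}. First I would record the factorisation $(q^s;q)_{L+1} = (1-q^s)(1-q^{s+1})\cdots(1-q^{L+s})$. When $s+1 \le k \le L+s$, the numerator factor $1-q^k$ of the first term of $H_{L,s,k}$ cancels the like factor in the denominator, so
\[
\frac{q^s(1-q^k)}{(q^s;q)_{L+1}} \;=\; \frac{q^s}{1-q^s}\prod_{\substack{s+1 \le j \le L+s\\ j \ne k}}\frac{1}{1-q^j},
\]
which is precisely $\sum_{N \ge 0}|\{\pi \in {\corange I_{L,s,k}} : |\pi| = N\}|\,q^N$: the factor $q^s/(1-q^s)$ forces at least one part equal to $s$ (hence the smallest part is $s$), and the remaining product admits exactly the permissible parts of $\{s+1,\dots,L+s\}\setminus\{k\}$. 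Likewise $1/(q^{s+1};q)_L$ is the generating function for partitions with all parts in $\{s+1,\dots,L+s\}$, so $1/(q^{s+1};q)_L - 1$ generates the nonempty ones, i.e.\ equals $\sum_{N \ge 1}|\{\pi \in D_{L,s} : |\pi| = N\}|\,q^N$. Matching constant terms as well, we obtain, for $s+1 \le k \le L+s$,
\[
H_{L,s,k}(q) \;=\; \sum_{N \ge 0}\Bigl(\,|\{\pi \in {\corange I_{L,s,k}} : |\pi| = N\}| - |\{\pi \in D_{L,s} : |\pi| = N\}|\,\Bigr)q^N,
\]
and Theorem~\ref{MostGen} asserts that the $N$th coefficient is strictly positive for every $N \ge \Gamma(s)$. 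This settles Conjecture~\ref{Analytic} in the range $s+1 \le k \le L+s$, even with the explicit positivity threshold $\Gamma(s)$.

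It remains to treat $k > L+s$, where $1-q^k$ does not cancel. Here I would compare $H_{L,s,k}$ with $H_{L,s,L+s}$: directly from the definition,
\[
H_{L,s,k}(q) - H_{L,s,L+s}(q) \;=\; \frac{q^s\bigl(q^{L+s}-q^k\bigr)}{(q^s;q)_{L+1}} \;=\; \frac{q^{2s+L}-q^{s+k}}{(q^s;q)_{L+1}}.
\]
Writing $P_m := [q^m]\,1/(q^s;q)_{L+1}$ for the number of partitions of $m$ into parts from $\{s,\dots,L+s\}$, the $N$th coefficient of the right-hand side is $P_{N-2s-L}-P_{N-s-k}$, and since $k > L+s$ forces $2s+L < s+k$, this is nonnegative for all large $N$ because $(P_m)$ is eventually nondecreasing. (Eventual monotonicity of $(P_m)$ is standard: $1/(q^s;q)_{L+1}$ has a unique pole of maximal order, namely order $L+1$ at $q=1$, so $P_m$ agrees with a quasipolynomial of degree $L$ with positive leading coefficient once $m$ is large.) Hence $H_{L,s,k}(q) = H_{L,s,L+s}(q)$ plus a series with eventually nonnegative coefficients, and since $H_{L,s,L+s}(q)$ is eventually positive by the first paragraph, so is $H_{L,s,k}(q)$.

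I do not anticipate a genuine difficulty: once the two generating-function identities are in hand, the conjecture is essentially a transcription of Theorem~\ref{MostGen}, and the only care needed is bookkeeping --- checking that the cancellation of $1-q^k$ is valid exactly in the range $s+1 \le k \le L+s$, matching constant terms, and invoking eventual monotonicity of $(P_m)$ in the out-of-range case. One could alternatively give a self-contained proof for every $k \ge s+1$ by an asymptotic comparison: the positive part of $H_{L,s,k}$ and the subtracted series $1/(q^{s+1};q)_L - 1$ each have $N$th coefficient asymptotic to a constant times $N^{L-1}$, with leading coefficients in the ratio $k/s > 1$; but this route only yields an ineffective threshold, whereas passing through Theorem~\ref{MostGen} delivers the explicit bound $\Gamma(s)$.
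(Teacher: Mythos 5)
Your argument is correct, and while the first half coincides with the paper, the second half takes a genuinely different route. For $s+1 \le k \le L+s$ you reproduce exactly the paper's reduction: the cancellation of $1-q^k$ yields the combinatorial interpretation \eqref{eq:coeffHdef}, and Theorem~\ref{MostGen} finishes that range. For $k > L+s$, however, the paper does not compare with $H_{L,s,L+s}$ directly; it uses the shift identity of Lemma~\ref{Helpful} (whose difference is \emph{termwise} nonnegative because the factor $1-q^i$ cancels for $s \le i \le L+s$), which settles $L \ge s$ outright, and then closes the remaining gap $L+s < k \le 2s$ when $L < s$ by an induction (Lemma~\ref{thm:lemfinal}) resting on an explicit injection proving eventual positivity of $(q^s - q^{L+s-1})/(q^s;q)_{L+1}$ (Lemma~\ref{Conjecture3}). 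Your single comparison $H_{L,s,k} - H_{L,s,L+s} = (q^{2s+L}-q^{s+k})/(q^s;q)_{L+1}$ together with eventual monotonicity of $P_m$ collapses all of this into one step, which is shorter; the trade-offs are (i) your route is ineffective and its threshold depends on $L$ as well as $s$, which suffices for Conjecture~\ref{Analytic} as stated but not for the paper's stronger Theorem~\ref{Genk}, whereas the paper's combinatorial route keeps everything explicit with a bound depending only on $s$; and (ii) your justification of eventual monotonicity is slightly incomplete as written: having a unique pole of maximal order at $q=1$ gives $P_m \sim c_L m^L$ but does not by itself force $P_{m+1}\ge P_m$ eventually (a quasipolynomial of degree $d$ with constant leading coefficient but periodic degree-$(d-1)$ coefficient need not be eventually monotone, as partitions into parts $\{2,3\}$ show). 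The fix is easy --- the poles at primitive $d$-th roots of unity, $d\ge 2$, have order at most $\lceil (L+1)/2\rceil$, so the periodic corrections have degree at most $\lceil (L+1)/2\rceil - 1 < L-1$ and are dominated by the derivative $Lc_Lm^{L-1}$ of the polynomial part --- but it should be said.
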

As stated, the bound $l$ guaranteeing the coefficient of $q^N$ in $H_{L,s,k}(q)$
is positive for all $N \geq l$ may depend on $L$, $s$ or $k$ in Conjecture \ref{Analytic}.   When $s+1 \leq k \leq
L+s$, elementary partition theory gives the coefficient of $q^N$ in $H_{L,s,k}(q)$ 
as 
\begin{equation}\label{eq:coeffHdef}
	|\{\pi \in {\corange  I_{L,s,k}} : |\pi| = N \}| - |\{\pi \in D_{L,s} : |\pi| =
        N\}|.  
\end{equation}
Hence, Theorem \ref{MostGen} proves
Conjecture \ref{Analytic} when $s+1 \leq k \leq L+s$, and indeed Conjectures
\ref{Main}, \ref{Similar} and \ref{Analytic} motivated Theorem \ref{MostGen}.  However, Conjecture \ref{Analytic} is
valid for values of $k$ that do not have the combinatorial interpretation
specified in \eqref{eq:coeffHdef}.  We prove a result stronger than
Conjecture \ref{Analytic} that also generalizes Theorem \ref{MostGen}.
\begin{theorem}
\label{Genk}
  For positive integers $L$, $s$ and $k$, with $L \geq 3$ and $k \geq s+1$, the coefficient of
  $q^N$ in $H_{L,s,k}(q)$ is positive whenever $N \geq \Gamma(s)$, where
  $\Gamma(s)$ is defined in \eqref{eq:defgammas}.
\end{theorem}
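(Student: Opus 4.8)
The plan is to reduce Theorem \ref{Genk} to Theorem \ref{MostGen}, handling only the extra values of $k$ not covered by the combinatorial interpretation in \eqref{eq:coeffHdef}, namely $k > L+s$ (and, if relevant, the degenerate reading when $k = s+1$ is already covered). First I would observe that the coefficient of $q^N$ in $H_{L,s,k}(q)$ depends on $k$ only through the term $q^s(1-q^k)/(q^s;q)_{L+1}$, and more specifically only through $-q^{s+k}/(q^s;q)_{L+1}$, since the $q^s/(q^s;q)_{L+1}$ piece and the $1/(q^{s+1};q)_L - 1$ piece are independent of $k$. Writing $[q^N]$ for the coefficient extraction operator, I would set
\begin{equation*}
  a_{N,k} := [q^N] H_{L,s,k}(q) = [q^N]\frac{q^s}{(q^s;q)_{L+1}} - [q^N]\frac{q^{s+k}}{(q^s;q)_{L+1}} - [q^N]\left(\frac{1}{(q^{s+1};q)_L}-1\right),
\end{equation*}
and note that $[q^N] q^{s+k}/(q^s;q)_{L+1} = [q^{N-s-k}] 1/(q^s;q)_{L+1}$, which counts partitions into parts from $\{s,\ldots,L+s\}$ of size $N-s-k$; in particular it is weakly decreasing in $k$ for fixed $N$ (as $k$ grows the target $N-s-k$ shrinks, and $p(n,\{s,\dots,L+s\})$ is weakly increasing in $n$ on nonnegative inputs, vanishing for negative inputs). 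Hence $a_{N,k}$ is weakly increasing in $k$ for each fixed $N$, $L$, $s$.

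Given this monotonicity, the key step is: for every $N \geq \Gamma(s)$ and every $k$ with $s+1 \leq k \leq L+s$, Theorem \ref{MostGen} together with \eqref{eq:coeffHdef} gives $a_{N,k} > 0$; in particular $a_{N, L+s} > 0$ for all $N \geq \Gamma(s)$. Then for any $k > L+s$ we have $a_{N,k} \geq a_{N,L+s} > 0$ by the monotonicity just established, so the coefficient is positive for all $N \geq \Gamma(s)$ in that regime as well. Combining the two ranges of $k$ yields the statement for all $k \geq s+1$. I would spell out the elementary partition-theoretic identities behind the coefficient extractions — $1/(q^s;q)_{L+1} = \sum_{n\geq 0} p(n,\{s,s+1,\dots,L+s\}) q^n$ and $1/(q^{s+1};q)_L - 1 = \sum_{n \geq 1} p(n, \{s+1,\dots,L+s\}) q^n = |\{\pi \in D_{L,s}: |\pi| = n\}|$ — so that \eqref{eq:coeffHdef} is justified and the monotonicity in $k$ is transparent.

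The main obstacle here is essentially bookkeeping rather than a genuine difficulty: one must be careful that $\Gamma(s)$, which is engineered for the combinatorial range $s+1 \leq k \leq L+s$ in Theorem \ref{MostGen}, is genuinely independent of $k$ and of $L$ in the way that lets the monotonicity argument push it to all larger $k$ — but since Theorem \ref{MostGen} already asserts positivity for all such $L$ and $k$ simultaneously with the single bound $\Gamma(s)$, this is immediate. The only other point requiring a word of care is the edge case where $N - s - k$ is negative or zero: there the subtracted coefficient is $0$ (or the single term for $n=0$), which only helps, so no separate argument is needed. Thus the proof is short: establish the generating-function identities, deduce monotonicity of $[q^N]H_{L,s,k}$ in $k$, invoke Theorem \ref{MostGen} for the base range $s+1 \le k \le L+s$, and extend to $k > L+s$ by monotonicity.
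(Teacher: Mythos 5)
Your reduction of the range $s+1 \le k \le L+s$ to Theorem \ref{MostGen} via \eqref{eq:coeffHdef} is fine and matches the paper, but the extension to $k > L+s$ has a genuine gap: the monotonicity of $[q^N]H_{L,s,k}(q)$ in $k$ rests on the claim that $p(n) := p\big(n,\{s,\dots,L+s\}\big)$ is weakly increasing in $n$ on nonnegative inputs, and that claim is false. For example, with $s=5$ and $L=3$ the allowed parts are $\{5,6,7,8\}$, and $p(8)=1$ while $p(9)=0$; more generally $p(L+s)=1 > p(L+s+1)=0$ whenever $L \le s-2$. Consequently $a_{N,k}$ is \emph{not} weakly increasing in $k$: taking $k'=k+1$ with $N-s-k=9$ in the example above gives $a_{N,k+1}=a_{N,k}-1$. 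What you actually need is the weaker statement $p(N-2s-L) \ge p(N-s-k)$ for all $k > L+s$, i.e.\ that $p$ evaluated at one huge argument dominates $p$ at every smaller argument; for $m$ close to $N-2s-L$ this is precisely an eventual first-difference monotonicity statement of Bateman--Erd\H{o}s type, which is not obvious, is not supplied with an effective bound compatible with $\Gamma(s)$, and cannot be obtained from crude upper and lower estimates since $p(n)$ and $p(n-1)$ are asymptotically equal. So the "main obstacle is bookkeeping" assessment is not right: this step is where the difficulty lives.

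The paper avoids first differences entirely. Lemma \ref{Helpful} shows $H_{L,s,k+i}(q)-H_{L,s,k}(q)$ is nonnegative only for $s \le i \le L+s$, exactly because then the numerator factor $1-q^i$ cancels against the denominator $(q^s;q)_{L+1}$ (for $i=1$ there is no such cancellation, matching the counterexample above). Jumping by such $i$ from the base range covers all $k \ge s+1$ when $L \ge s$, but leaves gaps $L+s < k \le 2s$ when $L < s$; these are filled by Lemma \ref{Conjecture3} (a combinatorial proof that $(q^s-q^{L+s-1})/(q^s;q)_{L+1}$ has positive coefficients for $N \ge \gamma(s,s)$, itself leaning on Theorem \ref{General}) together with the strong induction of Lemma \ref{thm:lemfinal}, which telescopes differences of length $L-1$. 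If you want to rescue your approach, you would need to either restrict your monotonicity claim to steps $i \in [s, L+s]$ and then handle the leftover $k$ in the regime $L<s$ separately, or prove an effective eventual monotonicity theorem for $p\big(\cdot,\{s,\dots,L+s\}\big)$ — either of which amounts to the work you have omitted.
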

Again, we emphasize that the bound given in Theorem \ref{Genk} only depends on
$s$, is explicitly known, and is the same as the bound in Theorem
\ref{MostGen}.  We use Theorem \ref{MostGen} along with other results to prove Theorem
\ref{Genk} in Section \ref{sec:bigk}.

For the last conjecture of Berkovich and Uncu considered here, given a positive integer $L$,
\begin{itemize}
\item $G_{L,2}(q)$ is the series
    \begin{equation*}
        G_{L,2}(q): = \sum_{\substack{\pi \in \mho, \\ s(\pi)=2, \\ l(\pi)-s(\pi)
        \leq L}} q^{|\pi|} -  \sum_{\substack{\pi \in \mho, \\ s(\pi) \geq 3, \\
        l(\pi)-s(\pi) \leq L}} q^{|\pi|}, 
    \end{equation*}
    where $s(\pi)$ and $l(\pi)$ denote the smallest and largest parts of $\pi$,
    respectively, and $\mho$ denotes the set of partitions $\pi$ with $|\pi| >
    0$.
\end{itemize}
A series $ S = \sum_{n \geq
0} a_nq^n $ is said to be \emph{nonnegative} if $a_n \geq 0$ for all $n$. The
nonnegativity of the series $S$ is denoted by $ S \succeq 0. $  We prove the
next conjecture, found in \cite[Conjecture 5.3]{BerkovichAlexander2017SEPI}, in Section \ref{GL2}.
\begin{conjecture}[Berkovich and Uncu (2019)]
\label{s=2}
For $L=3$, $$G_{L,2}(q)+q^3 + q^9 + q^{15} \succeq 0; $$ for $L=4$, $$G_{L,2}(q)+q^3 + q^9 \succeq 0; $$ and for $L \geq 5$, $$ G_{L,2}(q)+q^3  \succeq 0.$$
\end{conjecture}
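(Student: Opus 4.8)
The plan is to relate $G_{L,2}(q)$ to a difference of series of the form appearing in Theorem \ref{MostGen}, and then use that theorem to control all but finitely many coefficients. First I would rewrite $G_{L,2}(q)$ using the substitution $\pi \mapsto \pi - (s(\pi)-1)$ applied to each part, which is a bijection sending a partition with smallest part $\sigma$ and $l(\pi)-s(\pi)\le L$ to a partition with smallest part $1$ and largest part $\le L+1$, keeping track of the shift in size. Grouping the partitions in each of the two sums defining $G_{L,2}(q)$ according to their smallest part $\sigma$, the first sum contributes (with $\sigma = 2$) a generating function for partitions with smallest part $2$, parts $\le L+2$; in the present article's language this is closely tied to $I_{L,2,k}$-type sets and $D_{L,2}$, while the second sum, over $\sigma \ge 3$, telescopes into a sum over $s\ge 3$ of generating functions for partitions with smallest part exactly $s$ and $l-s\le L$. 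The key observation is that consecutive terms in this telescoping sum largely cancel, leaving an expression that, after the shifting bijection, is exactly (or nearly exactly) a signed combination of $|\{\pi\in I_{L,s,k}:|\pi|=N\}|$ and $|\{\pi\in D_{L,s}:|\pi|=N\}|$ for small fixed values of $s$ (here $s=2$ and $s=3$), whose nonnegativity for large $N$ is precisely the content of Theorem \ref{MostGen}.

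Concretely, I expect the identity to take the shape
\begin{equation*}
    G_{L,2}(q) = \bigl(A_{L}(q) - B_{L}(q)\bigr) - q^{?}\bigl(A'_{L}(q) - B'_{L}(q)\bigr) + (\text{lower-order correction}),
\end{equation*}
where each $A_L(q)-B_L(q)$ is a coefficientwise-nonnegative-for-large-$N$ series supplied by Theorem \ref{MostGen} (with the explicit bound $\Gamma(2)$, resp.\ $\Gamma(3)$), and the correction terms are honest polynomials or have only finitely many negative coefficients. Once $G_{L,2}(q)$ is expressed this way, the series $G_{L,2}(q) + q^3$ (and the $L=3,4$ variants with the extra $q^9, q^{15}$ terms) will be manifestly nonnegative in all sufficiently high degrees, say all $N \ge \Gamma(3)$ or some similar explicit constant. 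It then remains to check nonnegativity of the finitely many coefficients of degree below that bound.

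The main obstacle, and the step requiring the most care, is twofold. First, getting the telescoping/cancellation in the $\sigma \ge 3$ sum exactly right: the parts of a partition with smallest part $\sigma$ lie in $\{\sigma, \ldots, \sigma+L\}$, and after shifting down by $\sigma - 2$ the window becomes $\{2,\ldots,L+2\}$ but with the minimum forced to be $2$, so I must track precisely which shifted partitions get overcounted and in what multiplicity — this is where genuine new bookkeeping (as opposed to a direct appeal to Theorem \ref{MostGen}) is needed, and it is plausibly the reason the conjecture needs the ad hoc corrections $q^3$, $q^9$, $q^{15}$ at small $L$. Second, the explicit low-degree verification: one must compute the relevant coefficients up to the bound (which depends on $\Gamma(2)$ and $\Gamma(3)$ and hence may be moderately large) and confirm nonnegativity — routine but necessary, and it is exactly these small cases that dictate the extra monomials in the $L=3$ and $L=4$ statements, explaining why the conjecture is stated piecewise. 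I would organize the write-up as: (i) the shifting bijection and the resulting closed form for $G_{L,2}(q)$; (ii) reduction, via Theorem \ref{MostGen}, of nonnegativity in high degree to a finite check; (iii) the finite check, handling $L=3$, $L=4$, and $L\ge 5$ separately and recording where the exceptional monomials come from.
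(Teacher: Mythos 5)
There is a genuine gap, and it is quantitative rather than structural. Your step (i) is essentially sound: grouping by smallest part and shifting does lead to a closed form, namely the identity $G_{L,2}(q) = H_{L,2,L}(q)/(1-q^L)$ (Theorem \ref{OtherGL2}, already due to Berkovich and Uncu), which is the correct version of the ``signed combination'' you anticipate. The problem is step (iii). The bound supplied by Theorem \ref{MostGen} is $\Gamma(2) = \gamma(8,2)$, and the paper's own estimate shows $\Gamma(s) > (s+1)^{5000}$; so your ``finite check'' of all coefficients of degree below the bound would have to cover roughly $3^{5000}$ coefficients. That is not routine --- it is not feasible at all, and the paper explicitly flags this: eventual positivity from Theorem \ref{Genk} is not enough, and one needs a method that controls the coefficients of $H_{L,2,L}(q)$ for \emph{small} $n$. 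The actual proof therefore builds entirely new injections, specific to $s=2$, with tiny explicit thresholds ($N>3$ for $L\ge 11$, $N\ge N_L$ with $N_L\le 67$ for $5\le L\le 10$, $N>43$ for $L=3$, $N>20$ for $L=4$), after which the genuinely finite residual check is done by hand or machine. Your proposal has no substitute for this construction, and it is the real content of the proof.

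A second, subtler omission: because $b_n = a_n + a_{n-L} + a_{n-2L} + \cdots$, the single negative coefficient $a_3 = -1$ contaminates every $b_{3+iL}$, so adding $q^3$ to $G_{L,2}(q)$ fixes only the coefficient at $n=3$. To get $b_{3+iL}\ge 0$ for $i\ge 1$ you need \emph{strict} positivity $a_{3+iL}\ge 1$ somewhere in each such partial sum; the paper arranges this by exhibiting, for each relevant $n$, an explicit partition in $C^*_{L,2}$ outside the image of its injection (e.g.\ $a_n\ge 1$ for all $n\ge 14$ when $L\ge 11$). Mere nonnegativity of the high-degree coefficients, which is all Theorem \ref{MostGen} would give you even if its bound were small, does not suffice to close the argument.
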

\begin{remark}
	Our statement of Conjecture \ref{s=2} differs slightly from the one given in 
	\cite{BerkovichAlexander2017SEPI}, as their statement is not strictly correct.  In the
	case $L=3$, the conjecture in \cite{BerkovichAlexander2017SEPI} is stated as $G_{3,2}(q) + q^3 + q^9 \succeq 0$.  However, it
    can be checked, either through machine computation or by hand, that the coefficient of
	$q^{15}$ in $G_{3,2}(q)$ is -1,  and hence the discrepancy between our statement and
	theirs.  Subject to this minor modification, their conjecture is as
    stated above.  We further remark that in \cite{BerkovichAlexander2017SEPI}
    the authors prove a similar nonnegativity result for a series
    $G_{L,1}(q)$.  The definition of $G_{L,1}(q)$ is the same as the one for
    $G_{L,2}(q)$, except the restriction on $s(\pi)$ is that $s(\pi) = 1$ in the
    first sum and $s(\pi) \geq 2$ in the second. 
\end{remark}

Our proofs rely heavily on the following two lemmas, the first of which is a well-known result of Sylvester.  
\begin{lemma}[Sylvester (1882)]
\label{Frobenius}
For natural numbers $a$ and $b$ such that $\gcd(a,b)=1$, the equation
$ax+by=n$ has a solution $(x, y)$, with $x$ and $y$ nonnegative
integers, whenever $n \geq (a-1)(b-1)$. 
\end{lemma}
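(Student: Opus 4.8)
The plan is to use B\'ezout's identity together with a careful choice among the one-parameter family of integer solutions. Since $\gcd(a,b) = 1$, the element $a$ is invertible modulo $b$, so there is a unique residue $x_0 \in \{0, 1, \ldots, b-1\}$ with $a x_0 \equiv n \pmod{b}$; concretely, take $x_0$ to be the representative of $a^{-1} n$ in that range. Then $n - a x_0$ is an integer divisible by $b$, so we may write $n - a x_0 = b y_0$ for some integer $y_0$, and by construction $a x_0 + b y_0 = n$. It then remains only to verify that $(x_0, y_0)$ is nonnegative.

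The inequality $x_0 \geq 0$ is immediate from the choice of $x_0$. For $y_0 \geq 0$, I would use $x_0 \leq b - 1$ together with the hypothesis $n \geq (a-1)(b-1)$ to estimate
\[
b y_0 = n - a x_0 \geq (a-1)(b-1) - a(b-1) = -(b-1).
\]
Hence $y_0 \geq -(b-1)/b > -1$, and since $y_0$ is an integer this forces $y_0 \geq 0$. Taking $(x,y) = (x_0, y_0)$ then proves the lemma.

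On the structure of the argument: the only place the full strength of the bound $(a-1)(b-1)$ is used is the displayed estimate, and that bound is sharp (for $n = (a-1)(b-1)-1$ no nonnegative solution exists), though we do not need sharpness here. The degenerate cases $a = 1$ or $b = 1$ are handled automatically, since then $(a-1)(b-1) = 0$ and the congruence argument still applies (trivially so when $b=1$). There is no genuine obstacle here, as the result is classical and elementary; the only care needed is pinning down the residue $x_0$ correctly and tracking signs in the final estimate. If one prefers to avoid invoking B\'ezout, the same $x_0$ can be produced by the pigeonhole observation that $0, a, 2a, \ldots, (b-1)a$ form a complete residue system modulo $b$.
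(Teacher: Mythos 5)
Your proof is correct. Note that the paper does not supply its own proof of this lemma---it cites Sylvester (1882) and refers the reader to proofs in Alfons\'in's book and to an elementary proof elsewhere---so there is no internal argument to compare against. Your argument is the standard one: take the least nonnegative residue $x_0$ of $a^{-1}n$ modulo $b$, so that $x_0 \leq b-1$, and observe that $by_0 = n - ax_0 \geq (a-1)(b-1) - a(b-1) = -(b-1) > -b$, forcing the integer $y_0$ to be nonnegative. The sign computation is right, the degenerate cases $a=1$ and $b=1$ are handled, and the remark that the bound is sharp (no solution at $n = (a-1)(b-1)-1$) matches the paper's footnote on the Frobenius number.
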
 

The quantity $(a-1)(b-1)$ is known as the 
\emph{Frobenius number}\footnote{The Frobenius number for two coprime integers
	$a$ and $b$
	is usually defined as the largest integer \emph{not} expressible as a nonnegative
	linear combination of $a$ and $b$, and this number is always $(a-1)(b-1) - 1$.   Here,
	we have chosen to call the smallest number $N$ such that every $n \geq N$
	can be written as a linear combination of $a$ and $b$ the Frobenius number.
As indicated, it is known that $N=(a-1)(b-1)$.} of $a$ and $b$, and it is known
to be the smallest number with the property in Lemma \ref{Frobenius}.  Lemma \ref{Frobenius} was first known to Sylvester in
\cite{Sylvester82}, but we refer the reader to the four proofs in \cite[Pages
31-34]{alfonsin}.   More recently, an elementary proof was given in
\cite[Corollary 12]{Binner:2019aa}.  In addition to Sylvester's theorem, we also require the following simple lemma.

\begin{lemma}
\label{Simple}   
Let $s$ and $n$ be positive integers such that $n \geq s+1$.  Then the
equation $$ (s+1)X_{s+1} + (s+2)X_{s+2} + \cdots + (2s+1)X_{2s+1} = n
$$ has a solution $(X_{s+1}, X_{s+2}, \dots, X_{2s+1})$, where $X_i$ is a nonnegative integer for all $i$.
\end{lemma}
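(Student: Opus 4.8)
The plan is to avoid Sylvester's theorem (Lemma~\ref{Frobenius}) entirely here: applying it with $a = s+1$ and $b = s+2$ (which are coprime) would only guarantee a representation $n = (s+1)x + (s+2)y$ for $n \geq s(s+1)$, whereas we need one for every $n \geq s+1$. Instead, I would exploit the fact that $\{s+1, s+2, \ldots, 2s+1\}$ is a block of $s+1$ \emph{consecutive} integers, so that it meets every residue class modulo $s+1$.

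Concretely, first apply the division algorithm to write $n = m(s+1) + t$ with $0 \leq t \leq s$. Since $n \geq s+1$, we have $m \geq 1$. Next, rewrite this as
$$ n = (m-1)(s+1) + \bigl((s+1) + t\bigr), $$
and observe that $s+1 \leq (s+1)+t \leq 2s+1$, since $0 \leq t \leq s$. Hence, setting $X_{s+1} = m-1$, $X_{(s+1)+t} = 1$, and $X_i = 0$ for all remaining indices $i$ produces a nonnegative integer solution. (In the boundary case $t = 0$ one instead simply takes $X_{s+1} = m$, with all other $X_i = 0$, to avoid having two separate contributions to the same variable.)

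There is essentially no obstacle to this argument; the only point requiring a moment's care is verifying $m \geq 1$, so that $X_{s+1} = m - 1$ is nonnegative, and this is precisely where the hypothesis $n \geq s+1$ is used. The lemma follows immediately once this decomposition is recorded.
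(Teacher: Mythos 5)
Your proof is correct and is essentially identical to the paper's own argument: both apply the division algorithm to write $n = m(s+1) + t$ and then shift one copy of $s+1$ onto the remainder to land in the consecutive block $\{s+1,\ldots,2s+1\}$, with the $t=0$ case handled separately. Nothing further is needed.
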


\begin{proof}
We use the division algorithm to write $n=(s+1)q+r$ for some $q \geq
1$ and $0 \leq r \leq s$. If $r=0$, setting $X_{s+1} = q$ with all other $X_i
=0$ gives a suitable solution.  Otherwise $1 \leq r \leq s$, and then $$ n = (s+1)(q-1) + (s+1+r).$$ Note
that $ s+2 \leq s+1+r \leq 2s+1 $, so setting $X_{s+1} = q-1$ and
$X_{s+1+r} = 1$ with all other $X_i = 0$ gives a solution, completing the proof.
\end{proof}

\subsection{Recent Proofs of the Above Conjectures}\label{sec:recent}

Recently, Zang and Zeng \cite{zang} gave proofs of
Conjectures \ref{Main}, \ref{Similar} and \ref{Analytic}.  We
highlight the specific similarities and differences between their proofs and
ours, as well as the strengths of both approaches.
In our case, our approach almost always involves constructing an
injective map between the relevant sets of partitions, and those maps heavily rely on Lemmas \ref{Frobenius} and \ref{Simple}.   Furthermore, our
approach allows us to give, in all cases, explicit bounds for when the partition inequalities hold.  Our methods also allow us 
to prove Conjecture \ref{s=2}, which was unresolved until now.

In their paper and ours, a crucial step is to prove Conjecture \ref{Analytic} and then use it
to prove the other conjectures.  Another similarity is that the proofs of
Conjecture \ref{Analytic} are separated into two cases:  the case with
large $L$ and $k$ (compared to $s$) and the case with small $L$ or $k$.  The
comparisons between the two approaches to Conjecture \ref{Analytic} are as follows.
\begin{itemize}
\item An important achievement in both papers is to show that there exists an $M$, which
	depends only on $s$, such that the coefficient of $q^N$ in $H_{L,s,k}(q)$ is positive whenever $N \geq M$.  Thus, our work and theirs
    both achieve this strengthening of Conjecture \ref{Analytic}.  They prove
    this strengthening only for $\max(s+1,L) \leq k \leq L+s$ (\cite[Theorem
    1.1]{zang}), while we show this for any $k \geq s+1$ (Theorem \ref{Genk}).
    \item For large $L$ and $k$, their proofs and ours are different,
        as are the lower bounds on $L$ and $k$ for when these results hold.  The bounds on $N$ guaranteeing
        the coefficient of $q^N$ in $H_{L,s,k}(q)$ is positive are similar.  The
        lower bounds on $L$ and $k$ are lower in our case ($L \geq s+2$ and $k
        \geq 2s+2$) versus their case ($k \geq L \geq 2s^3 + 5s^2 +1$).  For
        their results and ours, the coefficient of $q^N$ in the series $H_{L, s,
        k}(q)$ is positive when $N$ exceeds a lower bound of order  $O(s^5)$.
       However, their restriction on $k$, that $k \geq L$, means that for a given $s$, if $L$
       becomes arbitrarily large, their result does not have this lower bound on
       $N$ for an arbitrarily large set of values of $k$, whereas in our case
       the bound is valid whenever $k \geq 2s+2$.  This is especially important
       in the limiting case $L \rightarrow \infty$. 
       For example, for a given $s$, for any fixed $k \geq 2s+2$, and any $N >
       (25s)^{5}$,  our approach shows that the number of partitions of $N$ with smallest part $s$ and no part equal to $k$ is more than the number of partitions of $N$ with smallest part greater than $s$.  
        On the other hand, their approach does not yield any such result since
        they require $k \geq L$ and here $L \rightarrow \infty$.

            \item For small $L$ or $k$,  their approach and ours differ greatly.  In \cite{zang}, the authors use 
        a celebrated result of Frobenius and Schur (related to the
        \emph{Frobenius coin problem}), which states that for a set $A = \{a_1, \ldots,
        a_m\}$ of positive integers whose greatest common divisor is 1, the number of
        partitions of a positive integer $n$ whose parts are restricted to $A$
        is approximately
        \begin{equation*}
            \frac{n^{m-1}}{(m-1)! a_1 \cdots a_m}.
        \end{equation*}
        This result is asymptotic in $n$, and it is unknown when this
        approximation is accurate.   In fact, even finding for which $n$ onwards there is
	at least one such partition is a well-known open problem;  see \cite{alfonsin}.
	Therefore, the result of Zang and Zeng is also asymptotic.
        In contrast, our methods are combinatorial, and we produce explicit
        bounds on when $H_{L,s,k}(q)$ is eventually positive.
        For large $L$ and small $k$ ($L \geq 3s+3$ and $s+1 \leq k \leq 2s+1$),
        our bounds are $O(s^{10})$, while for small $L$ ($L \leq 3s+2$), our          
        bounds are of the order $O\Big((6s)^{(6s)^{18s}}\Big)$. 

        Some advantages of the proof of Zang and Zeng in this case is that it is
        short, elegant, and easily understood.  Also, their methods show an
        intriguing connection between the present problems
    and the above mentioned theorem of Frobenius and Schur.   They further show, in \cite[Theorem
    1.3]{zang}, the eventual positivity of a series
    $H^*_{L,s,r,k_1,k_2}(q)$ that generalizes the series $H_{L,s,k}(q)$. In this case also, their results are asymptotic.  The chief
        advantage of our methods is that they produce explicit bounds, and they 
        also lead to a proof of Conjecture
        \ref{s=2}.  Indeed, in \cite[Page 12]{zang}, the authors state that
        techniques that produce explicit bounds on when $H_{L,s,k}(q)$ is
        eventually positive may lead to a proof of Conjecture \ref{s=2}.  Our methods confirm this.
\end{itemize}
A final remark about our results is that while we find explicit bounds
throughout this paper, we make no claims about the optimality of those bounds.  The question of finding the minimal bounds for
        when these results hold remains open.

\section{The case when $L$ is large for Theorem \ref{MostGen}}
\label{Proofs}

In this section, we build to proving Theorem \ref{MostGen} when $L$ is
relatively large, by which we mean $L$ is larger than a constant times $s$. In each
case, our lower bound on $L$ is explicitly stated.  We begin with a case
pertaining to Conjecture \ref{Main}, and then later generalize those arguments to prove
the large $L$ case of Theorem
\ref{MostGen}.  The general technique used in this manuscript will be
illustrated in this section.

\subsection{The case when the impermissible part $k$ is $L+s-1$, and $L$ is
relatively large}

In this section, we focus on the case $k=L+s-1$ in Theorem \ref{MostGen} with 
$L \geq s+3$.  That is, we are considering the case $C_{L,s}$ (or equivalently
$I_{L,s,L+s-1}$) when $L \geq s+3$.  This corresponds to the case of $L \geq
s+3$ in Conjecture \ref{Main}.

For any $s \geq 1$, define the quantities:

\begin{itemize}
\item $F(s) = (10s-2)(15s-3)+8s$; 
\item $\kappa(s) = (12s-1)((s+1)+(s+2)+\cdots (F(s)-1))+1$.
\end{itemize}

The number $\kappa(s)$ serves as $M$ in our proof of Conjecture
$\ref{Main}$ when $L \geq s+3$.

\begin{theorem}
\label{Large}
If $s$ and $L$ are positive integers with $L \geq s+3$ and $N \geq
\kappa(s)$, then 
\begin{equation*}
	|\{\pi \in C_{L,s} : |\pi| = N \}| > |\{\pi \in D_{L,s} : |\pi| = N\}|.
\end{equation*}
\end{theorem}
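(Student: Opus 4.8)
The natural approach is to construct an explicit injection $\phi \colon \{\pi \in D_{L,s} : |\pi| = N\} \to \{\pi \in C_{L,s} : |\pi| = N\}$ whose image misses at least one partition, for every $N \geq \kappa(s)$. A partition $\pi \in D_{L,s}$ has all parts in $\{s+1, \ldots, L+s\}$, while a partition in $C_{L,s}$ must have smallest part exactly $s$, all parts $\leq L+s$, and no part equal to $L+s-1$. So the job of $\phi$ is essentially to manufacture at least one part of size $s$ (and to destroy any parts of size $L+s-1$) while preserving the total $N$. The basic move is to take a part $a$ of $\pi$ with $a \geq s+1$, remove it, and replace it by a part $s$ together with $a - s$ more units distributed among \emph{new} parts — but this has to be done carefully so that the result still lies in $C_{L,s}$ (in particular no part exceeds $L+s$, no part equals $L+s-1$) and so that the map is injective and non-surjective. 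This is exactly where Lemmas \ref{Frobenius} and \ref{Simple} enter: they guarantee that leftover amounts of the right size can be written as nonnegative combinations of allowed part-sizes.

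\textbf{Key steps.} First I would split $D_{L,s}$ according to the structure of $\pi$: for instance, whether $\pi$ already contains a "large" part (one close to $L+s$), whether it contains the forbidden value $L+s-1$, whether it has many parts or few, and whether $N$ is large relative to the parts present. Because $L \geq s+3$, the gap between $s$ and the top of the allowed range is wide enough to give room to maneuver. In the main case — $\pi$ has a reasonably large part or enough total mass — one strips off a chosen part (or a chosen chunk of the partition) of controlled size $n$, with $n \geq$ some threshold, and then invokes Lemma \ref{Frobenius} (using coprime part-sizes lying in $\{s, s+1, \ldots, L+s\}\setminus\{L+s-1\}$, e.g. $s$ and $s+1$ or similar small coprime pairs when $s=1$, or an appropriate coprime pair in general) or Lemma \ref{Simple} to rewrite $n$ as a sum of permissible parts that includes at least one copy of $s$. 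The thresholds $F(s)$ and $\kappa(s)$ are designed precisely so that (a) Sylvester's bound $(a-1)(b-1)$ is met for the relevant coprime pairs, and (b) there is always enough mass to carry out the construction. One then checks injectivity by exhibiting how to recover $\pi$ from $\phi(\pi)$ (reading off which parts were "new" versus inherited, using the size bounds to disambiguate), and non-surjectivity by pointing to an explicit partition in $C_{L,s}$ of size $N$ not in the image (e.g. one built entirely out of copies of $s$, or $s$ together with one other well-chosen part). The strict inequality in the theorem, as opposed to $\geq$, comes from this missed partition.

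\textbf{Main obstacle.} The hard part will be the case analysis and, within it, simultaneously guaranteeing all three properties — landing in $C_{L,s}$ (the constraint "no part equal to $L+s-1$" is the annoying one, since the replacement chunks must avoid that single value), injectivity, and a cleanly identifiable partition outside the image — uniformly across all $N \geq \kappa(s)$. In particular, small or "exceptional" partitions in $D_{L,s}$ (few parts, or all parts clustered near the top of the range, or $N$ only slightly above $\kappa(s)$) will need separate, more hands-on treatment, and verifying that the various sub-maps glue into a single injection without collisions between cases is the delicate bookkeeping. The constants $10s-2$, $15s-3$, $8s$ and $12s-1$ appearing in $F(s)$ and $\kappa(s)$ are artifacts of pushing each of these sub-cases through Sylvester's bound with enough slack, so I would expect the proof to proceed case-by-case, in each case reducing to an application of Lemma \ref{Frobenius} or Lemma \ref{Simple} once the relevant quantity is shown to exceed the appropriate Frobenius number, which is ensured by $N \geq \kappa(s)$.
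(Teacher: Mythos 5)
Your plan correctly identifies the paper's overall strategy --- an explicit injection $\phi$ from $\{\pi \in D_{L,s} : |\pi|=N\}$ into $\{\pi \in C_{L,s} : |\pi|=N\}$, built case-by-case, powered by Lemmas \ref{Frobenius} and \ref{Simple}, with strictness coming from a partition missed by the image --- but the two steps you flag as "the hard part" are precisely where the proof lives, and your sketch does not contain the ideas needed to carry them out. First, the injectivity mechanism: you propose to recover $\pi$ from $\phi(\pi)$ by "reading off which parts were new versus inherited, using the size bounds to disambiguate," but this does not work as stated, because inserted parts and inherited parts occupy the same range of sizes. The device the paper uses is different: the \emph{frequency of $s$} in $\phi(\pi)$ is itself the case-identifier. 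Each case inserts a number of copies of $s$ drawn from a prescribed set (multiples of $12$; the values $15$, $20$, $14$; the values $2,4,6,8$; odd numbers other than $15$), these sets are pairwise disjoint, and within each case the inserted frequencies of two fixed coprime part-sizes determine, via the fixed linear equation from Lemma \ref{Frobenius}, exactly which part was removed. Relatedly, your suggestion to use $s$ itself in a coprime pair such as $(s,s+1)$ would collide with this bookkeeping, since the multiplicity of $s$ must be reserved as the case marker; the paper instead uses pairs like $(5s+1,5s+2)$, $(10s-1,15s-2)$, etc., all avoiding $s$.

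Second, and more seriously, your plan has no mechanism for the genuinely hard case: a partition $\pi$ with \emph{no} part equal to $L+s-1$ and \emph{no} part of small size occurring with large frequency, so that there is nothing obvious to "strip off" without creating a detectable collision. The paper's resolution is that $N \geq \kappa(s)$ forces $L+s \geq F(s)$ and forces some part $l \geq F(s)$ to occur; one removes a single copy of $l$ and rewrites $l - 2s$ (or $l-4s$, $l-6s$, $l-8s$) as a nonnegative combination of one of four coprime pairs chosen from $\{5s+1, 5s+2, 10s-1, 15s-2\}$, where the pair is selected according to which of these four values are \emph{absent} from $\pi$ --- this absence is what makes the newly created parts recoverable and hence the map injective. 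This is the step that dictates the forms of $F(s)$ and $\kappa(s)$, and it is a specific combinatorial idea, not merely "pushing sub-cases through Sylvester's bound with enough slack." Without it (and without the frequency-of-$s$ encoding), the proposal is a correct description of the goal rather than a proof.
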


\begin{proof}
We construct an injective map 
\begin{equation*}
	\phi : \{\pi \in D_{L,s} : |\pi| = N\} \rightarrow \{\pi \in C_{L,s} : |\pi| = N \}.
\end{equation*} 

To show strict inequality, at the end of the proof we show that there is an element
in the codomain of $\phi$ not in its range. 

For $\pi \in D_{L,s}$, the image of $\pi$ under $\phi$ is given in cases 
depending on the frequency of $L+s-1$ in $\pi$.  Hence, for brevity, we set
$\freq=f_{L+s-1}$, so any $\pi \in D_{L,s}$ has the form
\begin{equation*}
	\pi = ((s+1)^{f_{s+1}}, \ldots, (L+s-1)^{\freq}, (L+s)^{f_{L+s}}). 
\end{equation*}
Our definition of the image of $\pi$ under $\phi$ is given in
two cases, when $\freq = 0$ and when $\freq \geq 1$, and
each case contains several subcases.  Our strategy for ensuring $\phi$ is injective is
to have images of partitions under $\phi$ from different cases have
different frequencies of $s$, while ensuring that in each case itself $\phi$ is injective.
To make our strategy and arguments on injectivity clear, we summarize in Table \ref{tab:tab1}  the
frequencies of $s$ in partitions in the image of $\phi$ for each
case.  As the right hand
column of Table \ref{tab:tab1} contains disjoint sets, the frequency of $s$ in a partition in the image
immediately determines from what case its preimage comes.  Then we only
need to ensure that $\phi$ is injective in each case.
\begin{table}[htpb]
    \centering
    \begin{tabular}{|c|c|}
 \hline
Case      &   Possible frequencies of $s$ \\
\hline
1(a)        &   Multiples of 12 \\
\hline
1(b)(i)     &   15 \\
\hline
1(b)(ii)    &   20 \\
\hline
1(b)(iii)   &   2,4,6,8 \\
\hline
2(a)        &   Odd numbers other than 15 \\
\hline
2(b)        &   14 \\
\hline
    \end{tabular}
\caption{The frequency of $s$ in the image of a partition under the function $\phi$ in the
    different cases for Theorem \ref{Large}.}
        \label{tab:tab1}
\end{table}

Case 1: Suppose that $\freq=0$ in $\pi$. In this case, we obtain
$\phi(\pi)$ by inserting some number of parts equal to $s$ into the partition
$\pi$; as $\freq = 0$, we do not need to remove the parts of size $L+s-1$,
but must compensate by altering the other parts of $\pi$. The
number of parts equal to $s$ to be inserted into $\pi$ is given by the subcases below.

Case 1(a): Suppose that there exists an $m$ such that $s+1 \leq m \leq
F(s)-1$ and $f_m \geq 12s$. Let $m_0$ be the least such number. Then
define 
\begin{equation*}
	\phi(\pi) = (s^{12m_0}, (s+1)^{f_{s+1}},\ldots m_0^{f_{m_0}-12s},\ldots).
\end{equation*}
We can see that $\phi$ is injective in this case because from the frequency of
$s$ in $\phi(\pi)$ we can easily determine $m_0$, and from this $\pi$ can be
recovered. 

Case 1(b): Suppose that the condition of Case 1(a) does not hold.  That
is, for every $m$ such that $s+1 \leq m \leq F(s)-1$, we have $f_m
\leq 12s - 1$. {\cred  Note that if such partitions do not exist, then Case
	$1$(b) does not arise, and
there is no need to construct an injection}. Since $N \geq \kappa(s)$, we must have $L+s \geq
F(s)$, and also there must exist an $h \geq F(s)$ such that $f_h >
0$. Let $l$ be the least such number. Thus, we can write $\pi$ as
$$ \pi = ((s+1)^{f_{s+1}},\ldots,(F(s)-1)^{f_{F(s)-1}}, \dots,
l^{f_{l}},\ldots).$$ 
We have some further subcases.

Case 1(b)(i):  If $f_{5s+1} \geq 1$ and $f_{10s-1} \geq 1$, then
define 
\begin{equation*}
	\phi(\pi) =
	(s^{15},(s+1)^{f_{s+1}},\ldots,(5s+1)^{f_{5s+1}-1},\ldots,(10s-1)^{f_{10s-1}-1},\ldots).
\end{equation*}
The injectivity of $\phi$ is clear in this case.

Case 1(b)(ii):  If $f_{5s+1} = 0$ or $f_{10s-1} = 0$ and  $f_{5s+2}
\geq 1$ and $f_{15s-2} \geq 1$, then define 
$$\phi(\pi) =
(s^{20},(s+1)^{f_{s+1}},\ldots,(5s+2)^{f_{5s+2}-1},\ldots,(15s-2)^{f_{15s-2}-1},\ldots).$$ 
The injectivity of $\phi$ is also clear in this case.

Case 1(b)(iii):  If $f_{5s+1} = 0$ or $f_{10s-1} = 0$ and $f_{5s+2}
= 0$ or $f_{15s-2} = 0$.  Then at least one of the following statements
is true:

\begin{itemize}
\item $T_1$: $f_{5s+1}=0$ and $f_{5s+2}=0$;
\item $T_2$: $f_{5s+1}=0$ and $f_{15s-2}=0$;
\item $T_3$: $f_{10s-1}=0$ and $f_{5s+2}=0$;
\item $T_4$: $f_{10s-1}=0$ and $f_{15s-2}=0$.
\end{itemize}

The indices in each of the statements are intentionally
chosen to be coprime with each other.
For example, let us show that $5s+1$ and $15s-2$ are coprime.  If
$g=\gcd(5s+1,15s-2)$, then $g \mid (5s+1)$ and $g \mid (15s-2)$. But
then $g \mid 3(5s+1)-(15s-2) = 5$. Therefore $g=1$ or $g=5$, but $g
\neq 5$ since $5 \nmid (5s+1)$.  The other pairs can be shown to be coprime with
similar ease.

Since $F(s)-8s = (10s-2)(15s-3)$ and the aforementioned indices in each
statement are coprime, by Lemma $\ref{Frobenius}$ the
following equations have nonnegative integer solutions for all $m \geq
F(s)$:

\begin{equation*}
    \begin{split}
        &\bullet\; (5s+1)x_m + (5s+2)y_m = m-2s;\\
        &\bullet\; (5s+1)z_m + (15s-2)w_m = m-4s;\\
        &\bullet\; (10s-1)u_m + (5s+2)v_m = m-6s;\\
        &\bullet\; (10s-1)p_m + (15s-2)q_m = m-8s.
    \end{split}\hspace{8.2cm}
\end{equation*}

That the lower bound on $m$ is sufficient for all the equations to
have nonnegative integer solutions follows from the lower bound being sufficient
for the last equation to have such solutions;  there the lower
bound on $m$ is the one specified by Lemma \ref{Frobenius}.  For each $m \geq F(s)$, fix some values of
$x_m$, $y_m$, $z_m$, $w_m$, $u_m$, $v_m$, $p_m$ and $q_m$ that satisfy
the equations, and keep these values fixed throughout the proof. Recall that
$l$ was defined to be the least number greater than or equal to
$F(s)$ that appears with nonzero frequency in the partition $\pi$.
Then we have the following cases:

\begin{itemize}
\item if $T_1$ is true, define 
    \begin{equation*}
        \phi(\pi) = (s^2, (s+1)^{f_{s+1}}, \ldots, (5s+1)^{x_{l}},
        (5s+2)^{y_{l}}, \ldots,(F(s)-1)^{f_{F(s)-1}}, \dots, l^{f_{l}-1},
        \ldots);
    \end{equation*}
\item if $T_1$ is false and $T_2$ is true, define 
    \begin{equation*}\phi(\pi) = (s^4,
	    (s+1)^{f_{s+1}}, \ldots, (5s+1)^{z_{l}}, \ldots, (15s-2)^{w_{l}},\\
        \ldots,(F(s)-1)^{f_{F(s)-1}}, \dots, l^{f_{l}-1}, \ldots);
    \end{equation*}
\item if $T_1$ and $T_2$ are false and $T_3$ is true, define 
    \begin{equation*} 
	    \phi(\pi) = (s^6, (s+1)^{f_{s+1}}, \ldots, (5s+2)^{v_{l}}, \ldots,
        (10s-1)^{u_{l}},\\ \ldots,(F(s)-1)^{f_{F(s)-1}}, \dots, l^{f_{l}-1},
        \ldots);
    \end{equation*}
\item if $T_1$, $T_2$ and $T_3$ are false and $T_4$ is true, define
    \begin{equation*}
	    \phi(\pi) = (s^8, (s+1)^{f_{s+1}}, \ldots, (10s-1)^{p_{l}}, \ldots,
        (15s-2)^{q_{l}},\\ \ldots,(F(s)-1)^{f_{F(s)-1}}, \dots, l^{f_{l}-1},
        \ldots).
    \end{equation*}

\end{itemize}

%
The function $\phi$ is injective in Case 1(b).  To see why, given a partition $\pihat =
\phi(\pi)$ whose frequency of $s$ is 2, 4, 6 or 8, we can recognize $\pi$ as
coming from this case.  Then if, for example, the frequency of $s$ in $\pihat$
is 2,  then $T_1$ is true, and the frequencies of $5s+1$ and $5s+2$ in $\pihat$ give the values of
$x_{l}$ and $y_{l}$, respectively.  Then from the defining equation for $x_l$ and
$y_l$ given by
\begin{equation*} 
    (5s+1)x_{l} + (5s+2)y_{l} = l-2s,
\end{equation*}
we can recover $l$.  From there it is easy to find $\pi$.  A similar argument
applies if the frequency of $s$ in $\pihat$ is 4, 6 or 8.  Thus, in all of Case
1(b), $\phi$ is injective.

This completes the description of $\phi$ for the case $\freq = 0$.   Note, in aggregate,
the function $\phi$ is injective in Case 1.  If $\pihat = \phi(\pi)$, then the frequency
of $s$ in $\pihat$ indicates from which subcase $\pi$ comes and, as shown above, $\pi$ is
then recoverable.

Case $2$: Suppose $\freq \geq 1$.  Hence, to produce the image of $\pi$
under $\phi$ in
this case, we must remove all
parts of size $L+s-1$.  Recall that $\pi$ has the form
\begin{equation*}
	\pi = ((s+1)^{f_{s+1}}, \ldots , (L+s-1)^\freq,(L+s)^{f_{L+s}}).
\end{equation*}
Since $L \geq s+3$, we have $(L-s-1)j \geq 1$ for all
$j \geq 1$, and therefore
\begin{equation} \label{First}
	(L+s-1)j-s(2j-1) \geq s+1.
\end{equation}
Then, by Lemma \ref{Simple}, for all $j \geq 1$, the equation 
\begin{equation}\label{eq:defr}
	(L+s-1)j = s(2j-1)+(s+1)r_{s+1,j} + (s+2)r_{s+2,j} + \cdots + (2s+1)r_{2s+1,j}
\end{equation}
has nonnegative integer solutions
$r_{s+1,j},r_{s+2,j},\ldots,r_{2s+1,j}$. For each $j \geq 1$, fix  a solution
$r_{s+1,j},r_{s+2,j},\ldots,r_{2s+1,j}$.

Case $2$(a): Suppose $\freq \neq 8$.  {\cmagenta Since $L \geq s+3$, we
have} $L+s-1 > 2s+1$.  Define 
\begin{equation}
    \begin{split}
	\phi(\pi) = (s^{2\freq-1}, (s+1)^{f_{s+1}+r_{s+1,\freq}},
		\ldots,(2s+1)^{f_{2s+1}+ r_{2s+1,\freq}},(2s+2)^{f_{2s+2}},\\
        \ldots,
        (L+s-2)^{f_{L+s-2}}, (L+s-1)^0, (L+s)^{f_{L+s}}).
\end{split}\label{eq:defcase2}
\end{equation}
The case $\freq=8$ is dealt with separately below to ensure injectivity of
$\phi$ since then $2\freq-1=15$, and the frequency of 15 for $s$ in partitions in
the image of $\phi$ has already been used in Case 1(b)(i).

Case $2$(b): Suppose $\freq = 8$. It follows from \eqref{First} that $$ 8(L+s-1)-15s \geq s+1,$$ and thus 
\begin{equation*}
	8(L+s-1)-14s \geq s+1.
\end{equation*}
Therefore, by Lemma $\ref{Simple}$, the equation
\begin{equation}\label{eq:defy}
    8(L+s-1)=14s+(s+1)t_{s+1} + (s+2)t_{s+2} + \cdots +
(2s+1)t_{2s+1}
\end{equation} has a nonnegative integer solution.
Fix a solution $t_{s+1}, t_{s+2}, \ldots, t_{2s+1}$ of this equation. Thus, for
\begin{equation*}
	\pi = ((s+1)^{f_{s+1}}, \ldots , (L+s-1)^8,(L+s)^{f_{L+s}}),
\end{equation*} 
we define 
\begin{multline*}
	\phi(\pi) = (s^{14}, (s+1)^{f_{s+1}+t_{s+1}}, \ldots,
	(2s+1)^{f_{2s+1}+t_{2s+1}}, (2s+2)^{f_{2s+2}},\\
	\ldots,
    (L+s-2)^{f_{L+s-2}}, (L+s-1)^0, (L+s)^{f_{L+s}}).
\end{multline*}
To see why $\phi$ is injective in Case 2, suppose that $\pihat = \phi(\pi)$ and
that the frequency of $s$ in $\pihat$ is either an odd number not equal to 15, or 14.  In the
former case, from the frequency of $s$, we can determine $f = f_{L+s-1}$ from
\eqref{eq:defcase2} for its preimage;  then from \eqref{eq:defr}, one can
determine the constants $r_{s+1, f}, \ldots, r_{2s+1, f}$.  From this, it is
clear the partition $\pi$ can be reconstructed, so $\phi$ is injective in Case 2(a).  In the latter case when the frequency
of $s$ is 14, we can apply a similar argument using \eqref{eq:defy}.  We
conclude $\phi$ is injective in Case 2.

 We refer the reader back to Table \ref{tab:tab1} to note that the map $\phi$ is injective overall. If
$\pihat = \phi(\pi)$, then the frequency of $s$ in $\pihat$ gives the case from
which $\pi$ came, and in each case itself $\phi$ was shown to be injective. 

The injection above shows that $$ |\{\pi \in C_{L,s} : |\pi| = N \}| \geq
|\{\pi \in D_{L,s} : |\pi| = N\}|,$$ for every $N \geq \kappa(s)$. To complete the
proof of Theorem \ref{Large}, we prove that the inequality is in fact strict. To
show this, we find a partition of $N$ that is in $C_{L,s}$ but not in the image
of $\phi$. Since $N \geq \kappa(s)$ is large enough, by Lemma \ref{Frobenius}, there
exist nonnegative integers $x_0$ and $y_0$ such that $$ N = 10s + (s+1)x_0 +
(s+2)y_0.$$ 
Consider the partition $\lambda_N = (s^{10}, (s+1)^{x_0},
(s+2)^{y_0})$ of $N$.  Since $L \geq 3$, we have $L+s-1 > s+2$, so  $\lambda_N$
is in $C_{L,s}$.  However, the partition $\lambda_N$ is not in the image of $\phi$ since the
frequency of $s$ in $\lambda_N$ is $10$, and 10 does not occur as a frequency of
$s$ in a partition in the image of $\phi$ by Table \ref{tab:tab1}.

\end{proof}

\subsection{Generalizing Theorem \ref{Large} if the impermissible part $k$ is large}

A careful analysis of the proof of Theorem \ref{Large} in the previous section shows that we have not used the
fact
that the impermissible part is one less than the largest allowable part.  Therefore, the
proof can be extended for a general impermissible part $k$ under some restrictions.  We
presented the proof for $k=L+s-1$ in the previous section first to keep the case analysis simpler and illustrate our
techniques for the later proofs.  In the proof below, we explain how the proof
of Theorem
\ref{Large} can be easily generalized.  

We modify the definitions of $F(s)$ and $\kappa(s)$. For $s \geq 1$, define the quantities:
\begin{align*}
    \begin{split}
        &\bullet\; F'(s) = (21s-2)(35s-3)+8s;\\
        &\bullet\; \kappa'(s) = (12s-1)((s+1)+(s+2)+\cdots (F'(s)-1))+1.
    \end{split}
    \hspace{3.5cm}
\end{align*}

\begin{theorem}
\label{Largek}
Suppose $L$, $s$ and $k$ are positive integers such that $L \geq s+2$ and $ 2s+2
\leq k \leq L+s$.  Then 
\begin{equation*}
	|\{\pi \in {\corange I_{L,s,k}} : |\pi| = N \}| > |\{\pi \in D_{L,s} : |\pi| = N\}|,
\end{equation*}
for any $N \geq
\kappa'(s)$.
\end{theorem}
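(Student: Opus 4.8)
The plan is to mimic the proof of Theorem~\ref{Large} almost verbatim, replacing the single impermissible part $L+s-1$ by an arbitrary impermissible part $k$ in the range $2s+2 \le k \le L+s$, and checking that the construction of the injection $\phi$ never relied on the specific value $k = L+s-1$. As before, I would build an injective map
\begin{equation*}
	\phi : \{\pi \in D_{L,s} : |\pi| = N\} \longrightarrow \{\pi \in I_{L,s,k} : |\pi| = N \},
\end{equation*}
split into Case~1 (the frequency $f := f_k$ of $k$ in $\pi$ is $0$) and Case~2 ($f \ge 1$), with the same bookkeeping via the frequency of $s$ in the image (Table~\ref{tab:tab1}) guaranteeing that preimages from different subcases are distinguishable. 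In Case~1 we insert parts equal to $s$ and compensate by modifying \emph{other} parts, exactly as in Cases 1(a), 1(b)(i)--(iii) of Theorem~\ref{Large}; the only point to watch is that the ``compensating'' parts used there, namely $5s+1$, $5s+2$, $10s-1$, $15s-2$, must still be admissible (i.e.\ $\le L+s$ and $\ne k$) and that the bound $F(s)-1$ on the search range in Case~1(a) must still lie below $L+s$. In Case~2 we remove all $f$ copies of $k$ and redistribute the removed weight $kf$ into parts of size $s, s+1, \ldots, 2s+1$ using Lemma~\ref{Simple}, precisely as in \eqref{eq:defr} and \eqref{eq:defy}.

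The key inequality that must be re-verified is the analogue of \eqref{First}: in Case~2 we need, for all $j \ge 1$,
\begin{equation*}
	kj - s(2j-1) \ge s+1,
\end{equation*}
so that Lemma~\ref{Simple} applies to the equation $kj = s(2j-1) + (s+1)r_{s+1,j} + \cdots + (2s+1)r_{2s+1,j}$. Since $k \ge 2s+2$, we have $kj - s(2j-1) = (k-2s)j + s \ge 2j + s \ge s+2 > s+1$, so this holds; indeed the hypothesis $k \ge 2s+2$ is exactly what makes this work, which is why that bound appears in the statement. We also need to know that $L+s \ge F'(s)$ whenever $N \ge \kappa'(s)$ and the Case~1(b) condition holds — this forces the largest allowable part to be big enough that the coprime indices $21s-2$, $35s-3$ (the reason for enlarging $F(s)$ to $F'(s)$) and the compensating parts $5s+1, 5s+2, 10s-1, 15s-2$ all sit inside $\{s+1,\ldots,L+s\}$. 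The enlargement from $(10s-2)(15s-3)$ to $(21s-2)(35s-3)$ is presumably to create extra ``room'' so that, when $k$ happens to coincide with one of the compensating indices $5s+1,\,5s+2,\,10s-1,\,15s-2$ used in Case~1(b), we can substitute a \emph{different} coprime pair of indices (hence larger candidates like $21s-2$, $35s-3$) that avoid $k$; one must check the finitely many ways $k$ can collide with these indices and in each collision exhibit an alternative coprime pair of admissible parts, with the corresponding Frobenius bound still dominated by $F'(s)$.

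The main obstacle, and the only genuinely new content relative to Theorem~\ref{Large}, is handling the collisions between $k$ and the special indices appearing in Case~1(b), and more generally making sure that every part we touch (both the ones we decrement in Case~1 and the ones we increment in Case~2, namely $s+1,\ldots,2s+1$) is never equal to $k$: since $k \ge 2s+2$, the incremented parts $s+1,\ldots,2s+1$ are automatically safe, but the decremented parts in Case~1 are not, so the case analysis of Case~1(b) must be thickened to cover ``$k$ equals one of $5s+1,5s+2,10s-1,15s-2$'' and replace the offending pair by a coprime pair drawn from a slightly larger pool, which is what drives the definition of $F'(s)$. After that, the strict inequality is obtained exactly as before: since $N \ge \kappa'(s)$, Lemma~\ref{Frobenius} gives $N = 10s + (s+1)x_0 + (s+2)y_0$ for nonnegative integers $x_0, y_0$, and the partition $\lambda_N = (s^{10}, (s+1)^{x_0}, (s+2)^{y_0})$ lies in $I_{L,s,k}$ (as $k \ge 2s+2 > s+2$) but not in the image of $\phi$, because the frequency $10$ of $s$ does not occur among the images. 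Finally, since $F'(s) > F(s)$ and $\kappa'(s) > \kappa(s)$, the case $k = L+s-1$ covered by Theorem~\ref{Large} is subsumed, and together with the trivial inclusion $L \ge s+2$ this will later combine with the small-$L$ and small-$k$ cases to yield Theorem~\ref{MostGen}.
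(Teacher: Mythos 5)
Your proposal follows essentially the same route as the paper: replace $L+s-1$ by $k$ throughout the proof of Theorem~\ref{Large}, re-verify the key inequality $kj - s(2j-1) \geq s+1$ (which holds since $k \geq 2s+1$, with the extra hypothesis $k \geq 2s+2$ ensuring the parts $s+1,\ldots,2s+1$ inserted via Lemma~\ref{Simple} never equal $k$), and handle the possible collision of $k$ with the compensating indices of Case~1(b) by switching to an alternative coprime quadruple, which is exactly what forces the enlargement to $F'(s)$ and $\kappa'(s)$. The only piece you leave schematic is the explicit replacement quadruple --- the paper uses $7s+1$, $7s+2$, $21s-1$, $35s-2$ (whence $F'(s)=(21s-2)(35s-3)+8s$), with a further substitution ($7,13,21,29$) for the degenerate coincidence $s=1$, $k=9$ where $10s-1=7s+2$ --- but you correctly identify that this finite check is all that remains.
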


\begin{proof}
The proof of Theorem \ref{Largek} is the same as the proof of Theorem \ref{Large} with
$L+s-1$ being replaced with $k$ everywhere, with some minor modifications.   Note that we do not replace $L+s$ with $k+1$;
we only change the impermissible part from $L+s-1$ to $k$.   Since $k \geq
2s+1$, when $L+s-1$ is replaced by $k$, the crucial equation \eqref{First}
becomes
 \begin{equation*} 
	kj-s(2j-1) \geq s+1
\end{equation*} 
and holds for all $j \geq 1$. The condition $k \geq 2s+2$ is required to ensure that the
impermissible part $k$ is different from $2s+1$, which may have been added as a part in the
analogue of \eqref{eq:defr}, given by
\begin{equation*}
	kj = s(2j-1)+(s+1)r_{s+1,j} + (s+2)r_{s+2,j} + \cdots + (2s+1)r_{2s+1,j}.
\end{equation*}

Finally, we observe that the proof of Theorem \ref{Large} requires modification if
the impermissible part $k$ is one of the numbers $5s+1$, $5s+2$, $10s-1$ or
$15s-2$ because, to produce the image of a partition under $\phi$, these
numbers were added as parts in Case 1(b).  If $k$ is one these numbers, then we can repeat the same
proof with the numbers $5s+1$, $5s+2$, $10s-1$ and $15s-2$ replaced with $7s+1$,
$7s+2$, $21s-1$ and $35s-2$, respectively, and it is this modification that
necessitates changing the constants $F(s)$ and $\kappa(s)$ to $F'(s)$ and $\kappa'(s)$,
respectively.  Note that for $s=1$, the numbers $10s-1$
and $7s+2$ coincide and are equal to $9$.  Thus, for $s=1$ and $k=9$, we choose a set of numbers different from
$5s+1$, $5s+2$, $10s-1$ and $15s-2$ ($6$, $7$, $9$ and $13$ when $s=1$). The choice of $7$, $13$, $21$ and $29$ (instead of $6$, $7$, $9$ and $13$)
solves the problem for $s=1$ and $k=9$.  
These are all the necessary modifications needed to obtain a proof of the theorem.
\end{proof}

\subsection{An analogue of Theorem \ref{Large} if the impermissible part $k$ is small}

Theorem \ref{Largek} requires the condition $k \geq 2s+2$. The next theorem
{\cred  focuses} on when $s+1 \leq k \leq 2s+1$. For $s \geq 1$,  we modify $F(s)$ and $\kappa(s)$ as follows:
\begin{align*}
    \begin{split}
        &\bullet\; F''(s) = (120s(s+1)-2)(180s(s+1)-3)+420s(s+1);\\
        &\bullet\; \kappa''(s) = (300s(s+1)-1)((s+1)+(s+2)+\cdots (F''(s)-1))+1.
    \end{split}
    \hspace{3.5cm}
\end{align*}

\begin{theorem}
\label{LargeL_Smallk}
Suppose $L$, $s$ and $k$ are positive integers such that $L \geq 3s+3$ and $s+1 \leq k \leq 2s+1$. Then
\begin{equation*}
	|\{\pi \in {\corange  I_{L,s,k}} : |\pi| = N \}| > |\{\pi \in D_{L,s} : |\pi| = N\}|,
\end{equation*}
for any $N \geq
\kappa''(s)$.
\end{theorem}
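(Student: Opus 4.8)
\textbf{Proof proposal for Theorem \ref{LargeL_Smallk}.}
The plan is to adapt the injective-map strategy of Theorem \ref{Large}, but to overcome the new obstacle that the impermissible part $k$ now lies in the low range $\{s+1,\dots,2s+1\}$, which is precisely the range of parts used as ``filler'' when we invoke Lemma \ref{Simple} to redistribute weight after deleting large parts. When $k$ is large (Theorems \ref{Large} and \ref{Largek}) we only ever \emph{removed} copies of $k$ and added small parts; now $k$ itself is small, so in the analogue of equations \eqref{eq:defr} and \eqref{eq:defy} the part $k$ could reappear as filler, which is not allowed. First I would set up the same overall dichotomy on $\freq := f_k$, the frequency of the impermissible part $k$ in a partition $\pi \in D_{L,s}$: Case 1 where $\freq = 0$ (insert copies of $s$ and compensate by thinning out some intermediate part, exactly as in Cases 1(a)--1(b) of Theorem \ref{Large}, with the four coprime ``escape'' indices possibly shifted away from $k$ as in the proof of Theorem \ref{Largek}), and Case 2 where $\freq \geq 1$ (delete all copies of $k$, introduce an odd or otherwise distinguished number of copies of $s$, and redistribute the leftover weight among parts in a short interval). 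The frequency of $s$ in the image again encodes the case, giving injectivity, and one extra partition $\lambda_N = (s^{10},(s+1)^{x_0},(s+2)^{y_0})$ of $N$ (available by Lemma \ref{Frobenius} once $N$ is large) stays outside the image to force strictness.

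The key new device, and the reason the constants balloon to $F''(s)$ and $\kappa''(s)$, is a \emph{$k$-free} version of Lemma \ref{Simple}. The main step is: for $n$ sufficiently large, solve $\sum_{i} i X_i = n$ using only parts $i$ in a window of the form $\{s+1,\dots,2s+1\}\setminus\{k\}$ together with some slightly larger parts, i.e. I would replace the single equation of Lemma \ref{Simple} by an application of Lemma \ref{Frobenius} to a pair of coprime residues chosen from $\{s+1,\dots,2s+1,\dots\}$ but avoiding $k$. Concretely, one picks two integers $a,b$ near $2s$ (or multiples thereof, which is why factors like $120s(s+1)$ and $180s(s+1)$ appear — they guarantee coprimality and enough room to dodge $k$ for every value of $k$ in the allowed range and every $s$), verifies $\gcd(a,b)=1$, and uses $aX_a + bX_b = n - (\text{the $s$-mass we inserted})$, solvable whenever $n$ exceeds $(a-1)(b-1)$ plus a bounded correction. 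Since the deleted mass from $\freq$ copies of $k$ is at least $(s+1)\freq$ and $k \le 2s+1 \le \tfrac{L+s}{?}$, and $L \ge 3s+3$ guarantees $L+s-1$ (and the shifted escape indices, all $O(s^2)$) are genuinely available as parts, the numerics go through; the bound $N \ge \kappa''(s)$ is chosen to dominate the worst Frobenius number $(a-1)(b-1)$ arising over all $O(s)$ relevant pairs, multiplied by the largest frequency bound (here $300s(s+1)-1$ playing the role that $12s-1$ played before) and summed over the window up to $F''(s)-1$, exactly mirroring the definition of $\kappa(s)$ from $F(s)$.

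After establishing the $k$-free redistribution lemma, the rest is bookkeeping: re-run the six-case structure of Theorem \ref{Large} (Table \ref{tab:tab1}) with every occurrence of the ``small filler'' and of the escape indices $5s+1,5s+2,10s-1,15s-2$ replaced by values that are (i) pairwise coprime in the needed pairs, (ii) distinct from $k$ for all $s+1\le k\le 2s+1$, and (iii) at most $F''(s)-1$ so they are legitimately available when $N\ge\kappa''(s)$ forces $L+s\ge F''(s)$. One must also re-check that $L \ge 3s+3$ (rather than $s+3$) is exactly what is needed so that, after deleting $\freq$ copies of $k \le 2s+1$, the residual weight $kj - (\text{inserted }s\text{-mass})$ is still $\ge s+1$ and lands in the solvable range of the new lemma — this is the analogue of inequality \eqref{First}. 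The injectivity argument is verbatim that of Theorem \ref{Large}: distinct cases produce distinct frequencies of $s$, and within each case the defining Diophantine equation lets one recover the relevant index and hence $\pi$. I expect the main obstacle to be purely combinatorial rather than conceptual: choosing the replacement indices and the inflated constants so that coprimality, avoidance of $k$, and the size bound $\le F''(s)-1$ hold \emph{simultaneously and uniformly} in both $s$ and $k$ — in particular handling the small-$s$ coincidences (as in the $s=1$, $k=9$ exception noted for Theorem \ref{Largek}) without special-casing, which is presumably why the constants are taken as generously as $F''(s)=(120s(s+1)-2)(180s(s+1)-3)+420s(s+1)$.
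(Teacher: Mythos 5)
Your high-level diagnosis is right -- the new difficulty is that $k$ now sits inside the window of small filler parts -- but the proposal has two genuine gaps. First, your strictness witness $\lambda_N = (s^{10},(s+1)^{x_0},(s+2)^{y_0})$ is carried over verbatim from Theorem \ref{Large}, where the impermissible part was large. Here $s+1 \le k \le 2s+1$, so $k$ may equal $s+1$ or $s+2$ and $\lambda_N$ then contains the impermissible part and does not lie in $I_{L,s,k}$ at all. The witness must be built from parts strictly above $2s+1$; the paper uses $(s^{480(s+1)},(2s+2)^{x_0},(2s+3)^{y_0})$, with $2s+2$ and $2s+3$ coprime and guaranteed $\le L+s$ by $L \ge 3s+3$.

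Second, and more seriously, the Case 2 mechanism of Theorem \ref{Large} does not survive the transplant you describe. There, deleting $j$ copies of the impermissible part and inserting $2j-1$ copies of $s$ leaves residual weight $kj - s(2j-1) = (k-2s)j + s$, which is positive for all $j$ only because $k \ge 2s+1$ (this is exactly \eqref{First}). For $s+1 \le k \le 2s$ this quantity is negative for all large $j$, so no choice of ``$k$-free filler'' can absorb it -- the bookkeeping fails before coprimality even enters. The fix requires inserting only about \emph{one} copy of $s$ per deleted copy of $k$ (the paper inserts $\freq + 3j(\freq) - 2$ copies, with $j(\freq) \approx \freq/60(s+1)$, so the residual $k\freq - s(\freq + 3j(\freq)-2) \ge 2s+2$ is then paid out in filler parts from $\{2s+2,\dots,4s+3\}$, all of which exceed $2s+1 \ge k$; this is the actual role of $L \ge 3s+3$, namely $4s+3 \le L+s$, not the availability of the escape indices, which is forced by $N \ge \kappa''(s)$ giving $L+s \ge F''(s)$). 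That slower insertion rate in turn breaks the simple ``odd versus even frequency of $s$'' case separation, and one needs a finer encoding -- residues modulo $60(s+1)$ distinguishing the large-frequency case from the $f_k \le 2$ cases, with each deleted $k$ in the latter converted into a part $k+s$. None of this is recoverable from ``introduce an odd or otherwise distinguished number of copies of $s$ and redistribute the leftover weight,'' so the central quantitative step of the proof is missing.
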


\begin{proof}
Although the proof of Theorem \ref{LargeL_Smallk} is similar in style and
essence to that of Theorem \ref{Large}, it requires several more substantial
modifications, so we explain them in detail.  

We again construct an injective map 
\begin{equation*}
	\phi : \{\pi \in D_{L,s} : |\pi| = N\} \rightarrow \{\pi \in {\corange
	I_{L,s,k}} : |\pi| = N \}.
\end{equation*} 
To show strict inequality, at the end of the proof we show that there is an element
in the codomain of $\phi$ not in its range.

For $\pi \in D_{L,s}$, the image of $\pi$ under $\phi$ is given in cases 
depending on the frequency of $k$ in $\pi$.  For brevity, we set
$\freq=f_{k}$, so any $\pi \in D_{L,s}$ has the form
\begin{equation*}
	\pi = ((s+1)^{f_{s+1}}, \ldots, k^{\freq}, \ldots, (L+s)^{f_{L+s}}). 
\end{equation*}
Our definition of the image of $\pi$ under $\phi$ is given in
two cases, when $\freq \geq 3$ and when $\freq \leq 2$, and
each case contains several subcases.  Our strategy for ensuring $\phi$ is
injective is, like in Theorem \ref{Large},
to have the image of partitions under $\phi$ from different cases have
different frequencies of $s$.

Case 1: Suppose $\freq \geq 3$.  Then there exists a unique $j(\freq) \geq 0$ such that 
\begin{equation*}
    (60(s+1)-3)j(\freq)+3 \leq \freq \leq (60(s+1)-3)(j(\freq)+1)+2.
\end{equation*}  
We regard $j(\cdot)$ as a function from $\{3, 4, ...\}$ to the set of
nonnegative integers.  Because $k \geq s+1$, for any $i \geq 3$, we have
\begin{equation*}
    k i - s(i+3j(i)-2) \geq 2s+2.
\end{equation*}
Then, by Lemma \ref{Simple} (applied with $s$ replaced by $2s+1$ there), for all $i \geq 3$, the equation 
\begin{equation}\label{Neweq:defr}
    k i = s(i+3j(i)-2)+(2s+2)r_{2s+2,i} + (2s+3)r_{2s+3,i} + \cdots + (4s+3)r_{4s+3,i}
\end{equation}
has nonnegative integer solutions
$r_{2s+2,i},r_{2s+3,i},\ldots,r_{4s+3,i}$. For each $i \geq 3$, fix  a solution
$r_{2s+2,i},r_{2s+3,i},\ldots,r_{4s+3,i}$.  Since $L \geq 3s+3$, we have $L+s
\geq 4s+3$, so $2s+2, \ldots, 4s+3$ are valid parts for partitions in $D_{L,s}$.  Define 
\begin{equation*}
    \phi(\pi) = (s^{\freq+3j(\freq)-2}, \ldots, k^0, \ldots,(2s+2)^{f_{2s+2}+ r_{2s+2,\freq}}, \ldots, (4s+3)^{f_{4s+3}+r_{4s+3,\freq}}, \ldots ).
\end{equation*}
Note that because $s+1 \leq k \leq 2s+1$, the frequency $k$ in $\phi(\pi)$ is genuinely 0,
as it is not one of the parts whose frequency has increased.

To see why $\phi$ is injective in Case 1, note that each member of the set 
\begin{equation*}
    V = \{i+3j(i)-2: i \geq 3\}
\end{equation*}
is uniquely determined by its defining value of $i$;  if $i > i'$,  then
$j(i) \geq j(i')$, and so $i + 3j(i) - 2 > i' + 3j(i') - 2$.  Thus, if $\pihat =
\phi(\pi)$ and the frequency of $s$ in $\pihat$ is in $V$, we can reverse the process above.  From
the frequency of $s$ in $\pihat$, we can recover $\freq$;  from there we can use
$\freq$, $j(\freq)$ and \eqref{Neweq:defr} to determine the constants
$r_{2s+2,\freq},r_{2s+3,\freq},\ldots,r_{4s+3,\freq}$.  From this point, determining
$\pi$ is straightforward.  We note here, for showing that $\phi$ is injective overall
later, that the members of $V$ are congruent to $1, 2, 3, 4, \ldots, - 3$ modulo
$60(s+1)$.  That is, no member of $V$ is congruent to $0,-1$ or $-2$ modulo
$60(s+1)$.

Case 2: Suppose $\freq \leq 2$.  As in Case 1, to obtain the image of a partition
under $\phi$, we must remove the parts of size $k$ (if any) and insert parts of $s$.  To
ensure $|\phi(\pi)| = |\pi|$, we must alter the frequencies of other parts to compensate.   The
number of parts equal to $s$ to be inserted into $\pi$ is given by the subcases
below.  We describe all subcases of Case 2, and then discuss why $\phi$ is
injective in Case 2.

Case 2(a): Suppose that there exists an $m$ such that $s+1 \leq m \leq
F''(s)-1$ and $f_m \geq 300s(s+1)$. Let $m_0$ be the least such number.  Notice that $m_0
\neq k$ because $f \leq 2$.  Then
define 
\begin{equation*}
	\phi(\pi) = (s^{300(s+1)m_0-\freq}, (s+1)^{f_{s+1}},\ldots, k^0, \ldots, (s+k)^{f_{s+k}+f}, \ldots, m_0^{f_{m_0}-300s(s+1)},\ldots).
\end{equation*}

Case 2(b): Suppose that the condition of Case 2(a) does not hold. That
is, for every $m$ such that $s+1 \leq m \leq F''(s)-1$, we have $f_m
\leq 300s(s+1) - 1$. {\cred Note that if such partitions do not exist, then Case
	$2$(b) does not arise, and
there is no need to construct an injection}. Since $N \geq \kappa''(s)$, we must have $L+s \geq
F''(s)$, and also there must exist an $h \geq F''(s)$ such that $f_h >
0$. Let $l$ be the least such number. Thus, we can write $\pi$ as
$$ \pi = ((s+1)^{f_{s+1}}, \ldots, k^\freq, \ldots,(F''(s)-1)^{f_{F''(s)-1}}, \dots,
l^{f_l},\ldots).$$ 
We have some further subcases. To ease notation, we define the following quantities:
\begin{equation*}
    \begin{split}
        &\bullet\; \alpha = 60s(s+1)+1;\\
        &\bullet\; \beta = 60s(s+1)+2;\\
        &\bullet\; \gamma = 120s(s+1)-1;\\
        &\bullet\; \delta = 180s(s+1)-2.
    \end{split}\hspace{11cm}
\end{equation*}
Note that the quantities $\alpha$, $\beta$, $\gamma$ and $\delta$ are chosen
such that they are distinct from $k$ and
\begin{equation*}
    \begin{split}
        &\bullet\; \alpha, \beta, \gamma, \delta < F''(s),\\
        &\bullet\; \gcd(\alpha, \beta) = \gcd(\alpha, \delta) = 1,\\
        &\bullet\; \gcd(\gamma, \beta) = \gcd(\gamma, \delta) = 1,\\
        &\bullet\; \alpha + \gamma = 180s(s+1),\\
        &\bullet\; \beta + \delta = 240s(s+1). 
    \end{split}\hspace{10cm}
\end{equation*}

Case 2(b)(i):  If $f_{\alpha} \geq 1$ and $f_{\gamma} \geq 1$, then
define 
\begin{equation*}
	\phi(\pi) =
	(s^{180(s+1)-\freq},(s+1)^{f_{s+1}}, \ldots, k^0, \ldots, (s+k)^{f_{s+k}+f}, \ldots, \alpha^{f_{\alpha}-1},\ldots, \gamma^{f_{\gamma}-1}, \ldots).
\end{equation*}

Case 2(b)(ii):  If $f_{\alpha} = 0$ or $f_{\gamma} = 0$ and  $f_{\beta}
\geq 1$ and $f_{\delta} \geq 1$, then define 
$$\phi(\pi) =
(s^{240(s+1)-\freq},(s+1)^{f_{s+1}}, \ldots, k^0, \ldots, (s+k)^{f_{s+k}+f}, \ldots, \beta^{f_{\beta}-1},\ldots, \delta^{f_{\delta}-1}, \ldots). $$

Case 2(b)(iii):  If $f_{\alpha} = 0$ or $f_{\gamma} = 0$ and $f_{\beta}
= 0$ or $f_{\delta} = 0$,  then at least one of the following statements
is true:

\begin{itemize}
\item $T_1$: $f_{\alpha}=0$ and $f_{\beta}=0$;
\item $T_2$: $f_{\alpha}=0$ and $f_{\delta}=0$;
\item $T_3$: $f_{\gamma}=0$ and $f_{\beta}=0$;
\item $T_4$: $f_{\gamma}=0$ and $f_{\delta}=0$.
\end{itemize}

Since $F''(s)-420s(s+1) = (\gamma-1)(\delta-1)$, and since the relevant numbers are
coprime, by Lemma $\ref{Frobenius}$ the
following equations have nonnegative integer solutions for all $m \geq
F''(s)$:

\begin{align}\label{eq:tscenarios}
    \begin{split}
        &\bullet\; \alpha x_m + \beta y_m = m-60s(s+1);\\
        &\bullet\; \alpha z_m + \delta w_m = m-120s(s+1);\\
	&\bullet\; \gamma u_m + \beta v_m = m-360s(s+1);\\
	&\bullet\; \gamma p_m + \delta q_m = m-420s(s+1).
    \end{split}
    \hspace{8.6cm}
\end{align}

That the lower bound on $m$ is sufficient for all the equations to
have nonnegative integer solutions follows from the lower bound being sufficient
for the last equation to have such solutions;  there the lower
bound on $m$ is the one specified by Lemma \ref{Frobenius}.  For each $m \geq F''(s)$, fix some values of
$x_m$, $y_m$, $z_m$, $w_m$, $u_m$, $v_m$, $p_m$ and $q_m$ that satisfy
the equations, and keep these values fixed throughout the proof. Recall that
$l$ was defined to be the least number greater than or equal to
$F''(s)$ that appears with nonzero frequency in the partition $\pi$.
Then we have the following cases:

\begin{itemize}
\item if $T_1$ is true, define 
    \begin{multline*}
        \phi(\pi) = (s^{60(s+1)-f}, (s+1)^{f_{s+1}}, \ldots, k^0, 
	\ldots, (s+k)^{f_{s+k}+f}, \ldots, \alpha^{x_{l}}, \beta^{y_{l}},\\
\ldots,(F''(s)-1)^{f_{F''(s)-1}},
\dots, l^{f_{l}-1}, \ldots); 
\end{multline*}
\item if $T_1$ is false and $T_2$ is true, define 
\begin{multline*}
        \phi(\pi) = (s^{120(s+1)-f}, (s+1)^{f_{s+1}}, \ldots, k^0, 
	\ldots, (s+k)^{f_{s+k}+f}, \ldots, \alpha^{z_{l}}, \ldots, \delta^{w_{l}},\\
\ldots,(F''(s)-1)^{f_{F''(s)-1}},
\dots, l^{f_{l}-1}, \ldots); 
\end{multline*}
\item if $T_1$ and $T_2$ are false and $T_3$ is true, define 
\begin{multline*}
 \phi(\pi) = (s^{360(s+1)-f}, (s+1)^{f_{s+1}}, \ldots, k^0, 
	\ldots, (s+k)^{f_{s+k}+f}, \ldots, \beta^{v_{l}}, \ldots, \gamma^{u_{l}},\\
\ldots,(F''(s)-1)^{f_{F''(s)-1}},
\dots, l^{f_{l}-1}, \ldots); 
\end{multline*}
\item if $T_1$, $T_2$ and $T_3$ are false and $T_4$ is true, define
 \begin{multline*}
        \phi(\pi) = (s^{420(s+1)-f}, (s+1)^{f_{s+1}}, \ldots, k^0, 
	\ldots, (s+k)^{f_{s+k}+f}, \ldots, \gamma^{p_{l}}, \ldots, \delta^{q_{l}},\\
\ldots,(F''(s)-1)^{f_{F''(s)-1}},
\dots, l^{f_{l}-1}, \ldots). 
\end{multline*}
\end{itemize}
Note in all cases $\phi(\pi)$ has at least one part of size $s$, no parts of
size $k$, and all parts are $\leq L+s$, so $\phi(\pi) \in I_{L,s,k}$.

Define the following sets:
\begin{itemize}
\item $V_1 = \{60(s+1)i: i \geq 1\}$;
\item $V_2 = \{60(s+1)i-1: i \geq 1\}$;
\item $V_3 = \{60(s+1)i-2: i \geq 1\}$.
\end{itemize}

Note that the frequency of $s$ in a partition in the image of $\phi$ in Case 2 lies in $V_1$, $V_2$ or
$V_3$ according to whether $\freq$ is $0$, $1$ or $2$, respectively.  To see why $\phi$ is
injective in Case 2, suppose that $\pihat = \phi(\pi)$ and that the frequency of $s$ in
$\pihat$ lies in one of those three sets.   This frequency can be a member of one of two
sets:
\begin{equation}\label{eq:possibles}
	\{300i(s+1) - h : i  \geq 2,\; h = 0,1,2\}\;\; \textnormal{ or }\;\; \{60i(s+1) - h :
	i=1,2,3,4,6,7,\;  h=0,1,2\}.
\end{equation}	
The former set of values pertains to Case 2(a), while the latter to Case 2(b).  These
possible values for the frequency of $s$ in $\pihat$ distinguish which subcase $\pi$ comes
from.  Say, for example, the frequency of $s$ in $\pihat$ is $120(s+1) - h$ for
some $h=0,1,2$.  From this,
we recover $f$ as $h$.  We also know from the frequency of $s$ in $\pihat$ that  
for $\pi$ the condition $T_1$ above is false, but
$T_2$ is true.  Hence, the frequencies of $\alpha$ and $\delta$ in $\pihat$ give
the values of $z_l$ and $w_l$, respectively.  We can then use the second
equation in \eqref{eq:tscenarios} to find the value of $l$.  Once the value of $l$ is known, 
we can reconstruct $\pi$.  When the frequency of
$s$ in $\pihat$ is some other value in the sets given in \eqref{eq:possibles}, we can
similarly reconstruct $\pi$.

Finally, we note that $\phi$ is injective overall.  As discussed in both Cases 1 and 2
separately, $\phi$ is injective.  However, the sets $V, V_1, V_2$ and $V_3$ are
all pairwise disjoint.  Indeed, members of $V_1, V_2$ and $V_3$ are congruent to 0, -1 and -2 modulo
$60(s+1)$, respectively, whereas members of $V$, as noted in Case 1, are not congruent to 0, -1 or -2
modulo $60(s+1)$.   As these are the possible values of the frequency of $s$ in a partition in the
image of $\phi$, they distinguish the different cases for preimages under
$\phi$, and we conclude that $\phi$ is injective.

The injection above shows that $$ |\{\pi \in {\corange I_{L,s,k}} : |\pi| = N \}| \geq
|\{\pi \in D_{L,s} : |\pi| = N\}|,$$ for every $N \geq \kappa''(s)$. To complete the
proof of Theorem \ref{Large}, we prove that the inequality is in fact strict by
finding a partition of $N$ that is in {\corange $I_{L,s,k}$} but not in the image
of $\phi$. Since $N \geq \kappa''(s)$ is large enough, by Lemma \ref{Frobenius} there
exist nonnegative integers $x_0$ and $y_0$ such that $$ N = 480s(s+1) + (2s+2)x_0 +
(2s+3)y_0.$$ This gives a partition $\lambda_N = (s^{480(s+1)}, (2s+2)^{x_0},
(2s+3)^{y_0})$ of $N$ that is in {\corange $I_{L,s,k}$} (because $s+1 \leq k \leq 2s+1$), but not in the image of $\phi$ since the
frequency of $s$ in $\lambda_N$ is $480(s+1)$, a number which is different from the
frequencies of $s$ in partitions in the image of $\phi$. 
\end{proof}

\section{Proofs of Theorems \ref{MostGen} and \ref{Genk}}
 \label{Generalize}

 In Section \ref{sec:smallk}, we prove $H_{L,s,k}(q)$ is eventually positive for
 $s+1 \leq k \leq L+s$ (Theorem \ref{General}).   The bound $M$ for which $N \geq M$ guarantees the
coefficient of $q^N$ in $H_{L,s,k}(q)$ is positive is initially given as depending on $L$
and $s$, so we do not immediately obtain a proof of Theorem \ref{MostGen}.  However, from this result
and results in Section \ref{Proofs}, we are able to obtain a quick proof of
Theorem \ref{MostGen} in Section \ref{sec:sim}.  As noted in Section
\ref{Intro}, Conjectures \ref{Main} and \ref{Similar} can be obtained as
corollaries of Theorem \ref{MostGen}.  In Section \ref{sec:bigk}, we prove
Theorem \ref{Genk}.

\subsection{The case $s+1 \leq k \leq L+s$ for $H_{L,s,k}(q)$} \label{sec:smallk}

For positive integers $L \geq 3$ and $s$, define:
\begin{align}\label{eq:defplsgammals}
    \begin{split}
        &\bullet\; P_{L,s} = (s+1)(s+2)\ldots(s+L);\\
        &\bullet\; \gamma(L,s) =
        \left(\left(s+1\right)+\left(s+2\right)+\cdots+\left(s+L\right)\right)\\
        &\hspace{6.2cm} \cdot \left(P_{L,s}^{\left(P_{L,s}^2-1\right)L+2}+\left(\left(P_{L,s}^2-1\right)L-2\right)P_{L,s}\right).
    \end{split}
\end{align}
The number $\gamma(L,s)$, which only depends on $L$ and $s$, serves as our
bound $M$ in the next theorem.

\begin{theorem}
\label{General}
For positive integers $L, s$ and $k$, with $L \geq 3$ and $s+1 \leq k \leq L+s$, the inequality
$$ |\{\pi \in {\corange I_{L,s,k}} : |\pi| =
N \}| > |\{\pi \in D_{L,s} : |\pi| = N\}| $$ holds for every $N \geq \gamma(L,s)$.
\end{theorem}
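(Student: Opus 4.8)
The plan is to split into three cases according to the sizes of $k$ and $L$, two of which are already settled by Section \ref{Proofs}. If $k \geq 2s+2$, then $k \leq L+s$ forces $L \geq s+2$, so Theorem \ref{Largek} applies and gives the inequality for all $N \geq \kappa'(s)$. If $s+1 \leq k \leq 2s+1$ and $L \geq 3s+3$, then Theorem \ref{LargeL_Smallk} applies and gives it for all $N \geq \kappa''(s)$. Since $\gamma(L,s)$ from \eqref{eq:defplsgammals} is increasing in $L$ and $\gamma(3,s)$ already dwarfs both $\kappa'(s)$ (which is $O(s^{5})$) and $\kappa''(s)$ (which is $O(s^{10})$), the conclusion holds for every $N \geq \gamma(L,s)$ in both of these cases. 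The remaining, genuinely new, situation is $s+1 \leq k \leq 2s+1$ together with $3 \leq L \leq 3s+2$, which I will call the \emph{small-$L$ case}; the rest of the argument is devoted to it.

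For the small-$L$ case I would construct an injection $\phi \colon \{\pi \in D_{L,s} : |\pi| = N\} \to \{\pi \in I_{L,s,k} : |\pi| = N\}$ and, at the end, exhibit one partition in the codomain outside its image to obtain strict inequality. The key structural point is that a partition in $D_{L,s}$ uses only the $L$ part-sizes $s+1,\dots,s+L$, so from $N = \sum_{i=1}^{L}(s+i)f_{s+i} \leq T\max_i f_{s+i}$, where $T = (s+1)+\cdots+(s+L)$, a pigeonhole argument shows that some frequency $f_m$ is enormous once $N$ is large. The precise shape of $\gamma(L,s)$ — whose leading term is $T\cdot P_{L,s}^{(P_{L,s}^2-1)L+2}$ — is calibrated exactly so that $N \geq \gamma(L,s)$ forces $f_m \geq P_{L,s}^{(P_{L,s}^2-1)L}$, with room to spare, for the part $m$ realizing that maximum.

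The image $\phi(\pi)$ is then built, in cases depending on $f_k$ and on which frequency is large (as in the proofs of Theorems \ref{Large} and \ref{LargeL_Smallk}), by: deleting all copies of the impermissible part $k$; deleting a carefully chosen number of copies of $m$; and compensating for the removed weight by inserting copies of $s$ together with small corrections to the frequencies of certain other allowed parts, the existence of these corrections being guaranteed by Lemmas \ref{Frobenius} and \ref{Simple} (applied as in \eqref{Neweq:defr}). The factor $L$ in the exponent $(P_{L,s}^2-1)L$ reflects that this balancing may need to be carried out once for each of the $L$ possible part-sizes, while $P_{L,s}^2-1$ bounds the Frobenius numbers that arise — products of allowed parts, each $\leq s+L$, are $< P_{L,s}^2$ — so pushing the construction cleanly through all the rounds requires the frequency of $s$ to range over $\Theta\bigl(P_{L,s}^{(P_{L,s}^2-1)L}\bigr)$ distinct values, which the enormous $f_m$ supplies. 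To keep $\phi$ injective, the frequency of $s$ in $\phi(\pi)$ is forced to lie in a set that depends on $m$, on $f_k$, and on which subcase was used, in such a way that $m$, the number of deleted copies of $m$, and $f_k$ can all be read back off, after which $\pi$ is recovered; strictness then follows by exhibiting, via Lemma \ref{Frobenius} exactly as in the strictness arguments of Section \ref{Proofs}, a partition of $N$ with parts in $\{s\}\cup(\{s+1,\dots,s+L\}\setminus\{k\})$ whose frequency of $s$ is not among those realized by $\phi$.

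The step I expect to be the main obstacle is the injectivity bookkeeping just described: because $D_{L,s}$ provides only $L$ distinct part-sizes, there is very little room to encode the data needed to invert $\phi$, so the surgery must be iterated deeply and the ``frequency of $s$'' markers for the various cases must remain pairwise disjoint across all the rounds — and it is exactly this constraint that inflates $\gamma(L,s)$ to be doubly exponential in $s$ in this regime. Verifying that the explicit $\gamma(L,s)$ is simultaneously large enough to force the enormous frequency $f_m$, to justify every use of Lemmas \ref{Frobenius} and \ref{Simple}, and to leave room for the strictness witness is the part demanding the most care, although each individual verification is elementary.
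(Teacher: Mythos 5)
Your reduction to the case $s+1 \leq k \leq 2s+1$, $3 \leq L \leq 3s+2$ is sound: the two regimes you dispose of via Theorems \ref{Largek} and \ref{LargeL_Smallk} are correctly identified, and since $\gamma(L,s) \geq \gamma(3,s) > P_{3,s}^{1727} > \max(\kappa'(s),\kappa''(s))$, the weaker bound $\gamma(L,s)$ indeed suffices there. (For what it is worth, the paper does the opposite: it proves Theorem \ref{General} uniformly for all $L \geq 3$ and $s+1 \leq k \leq L+s$ by a single injection, and only later, in the proof of Theorem \ref{MostGen}, restricts its use to $L \leq 3s+2$.) You have also correctly diagnosed the role of $\gamma(L,s)$: the pigeonhole over the $L$ available part sizes forces some $f_{s+h}$ to exceed the largest of the thresholds $m_{i,h}$, which is exactly what the leading term $T\cdot P_{L,s}^{(P_{L,s}^2-1)L+2}$ is calibrated to deliver.

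The gap is that the injection for the remaining case is never actually constructed; you describe its intended shape and then explicitly defer ``the injectivity bookkeeping,'' which is precisely where all the content of the theorem lives. Three concrete obstacles that your sketch does not confront. First, the compensation scheme of \eqref{Neweq:defr} uses parts $2s+2,\dots,4s+3$ and is unavailable here, since $L \leq 3s+2$ gives $L+s \leq 4s+2$; one must instead compensate with just two part sizes $s+\alpha, s+\beta \in \{s+1,s+2,s+3\}$ avoiding $k$, and when $k=s+2$ with $s$ odd the forced choice $s+1, s+3$ is \emph{not} coprime, so Lemma \ref{Frobenius} does not apply directly — the paper needs a separate parity argument (dividing by $2$ and using the Frobenius number of $\tfrac{s+1}{2}$ and $\tfrac{s+3}{2}$, together with correction terms of the form $s\,\delta_{k,s+2}\eta_s\eta_t$). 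Second, $f_k$ is unbounded, so the frequency of $s$ in the image must encode $f_k$ over an infinite range while staying disjoint from the markers of every other case; this forces a separate treatment of $f_k \geq P_{L,s}^2$ with its own encoding set, not merely ``a set depending on $m$, $f_k$ and the subcase.'' Third, in the intermediate range $0 < f_k < P_{L,s}^2$ the part with large frequency must be chosen against thresholds depending jointly on $f_k$ and the part index so that the pair can be decoded from a single integer (the frequency of $s$), and one must check this part cannot equal $k$. Each of these is resolvable — the paper resolves them — but as written your argument asserts rather than establishes that the surgery can be carried out and inverted, so the central claim remains unproved.
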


\begin{proof}
    For $N \geq \gamma(L,s)$, we construct an injective map 
\begin{equation*}
	\psi : \{\pi \in D_{L,s}
	: |\pi| = N \} \longrightarrow \{\pi \in {\corange I_{L,s,k}} : |\pi| = N \}.
\end{equation*}
To show strict inequality, at the end of the proof we show that there is an element
in the codomain of $\psi$ not in its range.

We shall separate our argument into cases and subcases depending on
the frequencies of parts of $\pi = ((s+1)^{f_{s+1}}, \ldots, k^{f_k}, \ldots,
(L+s)^{f_{L+s}})$ in the domain.  The
part whose frequency defines the cases is $k$;  hence, to simplify notation, we
set $\freq = f_k$, so a partition in the domain has the form
\begin{equation*}
  \pi = ((s+1)^{f_{s+1}}, \ldots, k^{f}, \ldots, (L+s)^{f_{L+s}}).
\end{equation*}

Case $1$: Suppose $\freq=0$.  Since $N \geq \gamma(L,s)$ is large enough, there is an $m$ such that
$s+1 \leq m \leq L+s$ and $f_m \geq s$. Let $m_0$ be the least such number.  Clearly, the
number $m_0$ cannot be $k$, since $\freq = 0$.  Then define
$$ \psi(\pi) = (s^{m_0}, (s+1)^{f_{s+1}},\ldots,m_0^{f_{m_0}-s},\ldots).$$  As
$f_k = f = 0$, and the frequency of $s > 0$ in $\psi(\pi)$, we see that $\psi(\pi) \in I_{L,s,k}$.

It is clear that $\psi$ is injective on the domain in this case, and the frequency of $s$
in a partition in its image is in  the set
\begin{equation*}
    U_1 = \{s+1, \dots, L+s\}.
\end{equation*}
Case $2$: Suppose that $ \freq \neq 0$ in $\pi$.  We have some subcases.  As the
partitions of {\corange $I_{L,s,k}$}
have no part equal to $k$ but must have a part of size $s$, to
obtain $\psi(\pi)$ from $\pi$ we remove the parts of size $k$, add parts of size
$s$, and compensate in some way so that $|\psi(\pi)| = |\pi|$.  Choose $\alpha$ and $\beta$ as follows:
\begin{itemize}
\item if $k \neq s+1$ and $k \neq s+2$, then choose $\alpha = 1$ and $\beta = 2$;
\item if $k=s+1$, choose $\alpha=2$ and $\beta =3$;
\item if $k=s+2$, choose $\alpha=1$ and $\beta=3$.
\end{itemize}

Since $L \geq 3$, we have $s+1 \leq s+\alpha < s+\beta \leq L+s$.  Importantly,
the numbers
$\alpha$ and $\beta$ are chosen so that $k \neq s+\alpha$ and $k \neq s+\beta$.
{\cpurple Furthermore, the numbers $s+\alpha$ and $s + \beta$ are coprime except possibly when
$k=s+2$;  in that case, if $s$ is odd, the greatest common divisor of $s+\alpha$
and $s+\beta$ is 2.  The case $k=s+2$ with $s$ is odd will require special treatment
below because of this.}

Case $2$(a): Suppose $\pi$ has $\freq \geq P_{L,s}^2$.  For any $j \geq
P_{L,s}^2$, let $h(j)$ be the integer satisfying $P^{h(j)}_{L,s} \leq j <
P^{h(j)+1}_{L,s}$, and consider the set
\begin{equation*}
    U_a = \{j + (h(j)-3)P_{L,s} -2 : j \geq P_{L,s}^2\}.
\end{equation*}
Clearly every $j \geq P_{L,s}^2$ gives a unique member of $U_a$.  Conversely,
from each member of $U_a$,
its defining value of $j$ can be recovered;  if $j > j'$, then $h(j)  \geq
h'(j)$, and so $j + (h(j)-3)P_{L,s} - 2 > j' + (h(j')-3)P_{L,s} - 2$.

As $N \geq \gamma(L,s)$, it is easy to verify, for any $j \geq P_{L,s}^2$, that
\begin{equation*}
	k j \geq s(j+(h(j)-3)P_{L,s} - 2)+(s+\alpha  -1)(s+\beta  -1).
\end{equation*}
Therefore, if $k \neq s+2$, or $k = s+2$ and $s$ is even, by Lemma \ref{Frobenius}, since $s+\alpha$
and $s+\beta$ are coprime, for any $j \geq P^2_{L,s}$, the equation
\begin{equation}\label{eq:defxj}
kj  = s(j+(h(j)-3)P_{L,s} -2)+ (s+\alpha)x_j + (s+\beta)y_j
\end{equation} 
has nonnegative integer solutions $x_j$ and $y_j$.  

{\cpurple We noted earlier that when $k=s+2$ and $s$ is odd then $s+\alpha = s+1$ and
$s+\beta =  s+3$
have a greatest common factor of 2, so we cannot immediately apply Lemma
\ref{Frobenius} to obtain $x_j$ and $y_j$ in \eqref{eq:defxj}.  We
can however overcome this difficulty as follows.  Since $P_{L,s}$
is even, the integer $ kj  - s(j+(h(j)-3)P_{L,s}-2)$ is also even for all $j$ and is
greater than or equal to $(s+1)(s+3)$.  Thus, it follows that $$ Q=\frac{kj  -
s(j+(h(j)-3)P_{L,s}-2)}{2}$$ is an integer greater than or equal to the
Frobenius number of the coprime integers $\frac{s+1}{2}$ and $\frac{s+3}{2}$.  Thus, by Lemma \ref{Frobenius}, the quantity $Q$ can be expressed as
a nonnegative integer combination of $\frac{s+1}{2}$ and $\frac{s+3}{2}$, which
can be used to find solutions $x_j$ and $y_j$ to \eqref{eq:defxj}.  So,
even in the case when $k=s+2$ and $s$ is odd, \eqref{eq:defxj} has nonnegative
integer solutions.}

In any case, for each $j \geq P_{L,s}^2$, fix a solution $x_j$ and $y_j$ to
\eqref{eq:defxj} and keep it fixed throughout the entire proof.

Define
\begin{equation*}
    \psi(\pi) = (s^{\freq+(h(f)-3)P_{L,s}-2}, \ldots,
(s+\alpha)^{f_{s+\alpha} + x_\freq}, \ldots,
(s+\beta)^{f_{s+\beta} + y_\freq},\ldots, k^0, \ldots), 
\end{equation*}
where it is understood that the part $k$ is not precisely placed (it may, for
example, be the case that $k < s+\beta$).

To see that $\psi$ is injective in this case, suppose that $\pihat =
\psi(\pi)$ and that the frequency of $s$ in $\pihat$ is in $U_a$.  As the defining values of members of $U_a$ can be
recovered, we can recover $f$ from the frequency of $s$.  Once $f$ is found, we
can use \eqref{eq:defxj} to find $x_f$ and $y_f$.  From here, we can easily
recover $\pi$.


We repeat the above strategy for the remaining cases. To that end,
define the set $$U_{b} =  \{P_{L,s}^h + (h-4)P_{L,s} : h \geq 3\} \cup
\{P_{L,s}^h + (h-4)P_{L,s} + 1 : h \geq 3\},$$
and note that the union is disjoint.  The reader is invited to confirm that  1)  each member of $U_b$
uniquely determines its defining value of $h$, and 2)  the sets
$U_1, U_a$
and $U_b$ are pairwise disjoint.   The image of $\pi$ under $\psi$ in each of the remaining
cases has its frequency of $s$ lie in
$U_b$.  Hence it suffices to show that when $\psi$ is restricted to the domain of the
remaining cases, it is injective.

The remaining case is when $f < P_{L,s}^2$.

Case $2$(b): Suppose that $0 < \freq < P_{L,s}^2$ in $\pi$.   Since $N \geq \gamma(s)$
is large enough, there exists an $h$ such that $1 \leq h \leq L$ and 
\begin{equation}\label{eq:fsh1}
  f_{s+h} \geq P_{L,s}^{(\freq-1)L+(h+2)}+((\freq-1)L+(h-2))P_{L,s}.
\end{equation} 
 For brevity of notation, for any $0 < i < P_{L,s}^2$ and $1 \leq h \leq L$, set
\begin{equation*}
    m_{i,h}= P_{L,s}^{(i-1)L+(h+2)}+((i-1)L+(h-2))P_{L,s}.
\end{equation*} 
We make a few key observations about the numbers $m_{i,h}$, the first of which
is that $m_{i,h} \in U_{b}$ for any valid $i$ and $h$.  Second, each number
$m_{i,h}$ is determined uniquely by its defining value of $(i,h)$.  To see why, if $m_{i,
h} = m_{i', h'}$, then, as the exponent in the first term of $m_{i,h}$ is at least three, we
have
\begin{equation*}
    (i-1)L + h+2 = (i'-1)L + h'+2,
\end{equation*}
and thus $h-h' = (i'-i)L$.
But since $1 \leq h, h' \leq L $, this implies that $h=h'$, and so $i'=i$.

Let $p$ be the least integer $1 \leq h \leq L$ for which \eqref{eq:fsh1} is
satisfied.  Notice that the restriction on $f$ prevents $s+p$ from being $k$.  By definition, 
\begin{equation}
    f_{s+p} \geq m_{\freq,p}.
  \label{eq:comp1}
\end{equation}
Notice that $m_{\freq, p}$ is divisible by $P_{L,s}$, and thus also by $(s+p)$;
hence, we can define $j_{\freq, p}$ by 
\begin{equation}\label{eq:jfk1}
  (s+p) j_{\freq,p} = sm_{\freq,p}.
\end{equation}  
From \eqref{eq:comp1} and \eqref{eq:jfk1}, it is easy to verify that 
\begin{equation*}
  f_{s+p} \geq j_{\freq,p} + 2s.
\end{equation*}
For any integer $u$, we define $\eta_u$ to be $1$ if $u$ is odd and $0$
otherwise.
One can easily verify that for any $1 \leq t \leq P_{L,s}^2$ and any $1 \leq i
\leq L$, we have
\begin{equation}\label{eq:fudge}
	2s(s+i)+tk -s \left(\delta_{k,s+2} \right) \eta_{s} \eta_{t}  \geq (s+\alpha-1) (s+\beta-1),
\end{equation}
{\cpurple We explain the peculiar term $s \left(\delta_{k,s+2}\right) \eta_{s} \eta_{t}$.  As before, when $k \neq s+2$, or $k = s+2$ and $s$ is even, the numbers $s+\alpha$ and $s+\beta$ are coprime,
so by Lemma \ref{Frobenius} there exist nonnegative integer solutions $z_{t,i}$ and $w_{t,i}$ such that 
\begin{equation}\label{eq:zti}
    2s(s+i)+tk = s \left(\delta_{k,s+2} \right) \eta_{s} \eta_{t} + (s+\alpha) z_{t,i} + (s+\beta) w_{t,i}.
\end{equation}
When $k=s+2$ and $s$ is odd, recall that $s+\alpha = s+1$ and $s+\beta = s+3$
are not coprime, but we can remedy this issue as before.   The term $s
\left(\delta_{k,s+2}\right) \eta_{s} \eta_{t}$ guarantees that the left hand side of \eqref{eq:fudge} is
even.   We then apply the same fix as earlier:  we divide the left
hand side of \eqref{eq:fudge} by 2, and the result is greater than the Frobenius
number of the coprime integers $\tfrac{s+1}{2}$ and $\tfrac{s+3}{2}$.  Then applying
Lemma \ref{Frobenius} gives us nonnegative integer solutions to \eqref{eq:zti}
in this case as well.}

In any case, for each $1 \leq t \leq P_{L,s}^2$ and $1 \leq i \leq L$, fix a solution
$z_{t,i}$ and $w_{t,i}$ to \eqref{eq:zti}.

Let $n_{\freq, p}$ be defined as $m_{\freq, p} + \left(\delta_{k,s+2} \right)
\eta_{s} \eta_{\freq}$.  Notice that $n_{\freq, p} \in U_b$.  To obtain $\psi(\pi)$ from $\pi$,  we add to $\pi$ a part $s$ with frequency 
$n_{\freq,p}$, reduce the frequency of $s+p$ by $j_{\freq,p}+2s$, remove the $f$
parts of $k$, and add $(s+\alpha)$ and $(s+\beta)$ with frequencies of
$z_{\freq,p}$ and $w_{\freq,p}$, respectively.  Thus, if $p \neq \alpha$ or $p \neq \beta$, we define 
\begin{equation*} 
    \psi(\pi) = \left(s^{n_{\freq,p}}, \ldots,
    (s+\alpha)^{f_{s+\alpha} + z_{\freq,p}}, \ldots, (s+\beta)^{f_{s+\beta} + w_{\freq,p}}, \ldots (s+p)^{f_{s+p} - j_{\freq,p} - 2s},
\ldots k^0, \ldots \right).
\end{equation*}
If $p = \alpha$, we define 
\begin{equation*} 
    \psi(\pi) = \left(s^{n_{\freq,\alpha}}, \ldots,
    (s+\alpha)^{f_{s+\alpha} + z_{\freq,\alpha}- j_{\freq,\alpha} - 2s}, \ldots, (s+\beta)^{f_{s+\beta} + w_{\freq,\alpha}},
\ldots k^0, \ldots \right).
\end{equation*}
If $p = \beta$, we define 
\begin{equation*} 
    \psi(\pi) = \left(s^{n_{\freq,\beta}}, \ldots,
    (s+\alpha)^{f_{s+\alpha} + z_{\freq,\beta}}, \ldots, (s+\beta)^{f_{s+\beta} + w_{\freq,\beta}-j_{\freq,\beta}-2s}, \ldots k^0, \ldots \right).
\end{equation*}
We first note that it follows from \eqref{eq:jfk1} and \eqref{eq:zti} that
$|\pi| = |\psi(\pi)|$.  As the frequency of $s$ in $\psi(\pi)$ is $n_{\freq,p}$,
it lies {\cmagenta in} $U_b$, as noted earlier.   To see why
$\psi$ is injective in this case, suppose that $\pihat = \psi(\pi)$ and that the frequency of
$s$ in $\pihat$ has the form $n_{\freq,p}$.   Recall that both $\freq$ and
$p$ can be recovered from $m_{\freq,p}$.  However, from $n_{\freq, p}$ it is
clear that we can recover $m_{\freq,p}$;  if $n_{\freq, p}$ is divisible by
$P_{L,s}$, then $m_{\freq,p} = n_{\freq,p}$, and otherwise $m_{\freq, p} =
n_{\freq,p} - 1$.  Thus $\freq$ and $p$ are recoverable from $n_{\freq,p}$ as
well.  From
there, using \eqref{eq:jfk1}, we can determine $j_{\freq,p}$.   Furthermore,
from $\freq$ and $p$, we can use \eqref{eq:zti} to determine $z_{\freq,p}$ and $w_{\freq,p}$.  From
this point, we can easily recover $\pi$.

We summarize the possible frequencies of $s$ in a partition in the image of $\psi$ and the
case to which it pertains in Table \ref{tab:tab2}.
\begin{table}
	\centering
	\begin{tabular}{|c|c|c|}
		\hline
		Case of $\pi$     & Value of $f$ &  Possible frequencies of $s$ in $\psi(\pi)$.\\
		\hline
		1 &  $f=0$ & $U_1$\\
		\hline
		2(a) & $f \geq P^2_{L,s}$ & $U_a$ \\
		\hline
		2(b) &  $0 < f < P^2_{L,s}$ & $U_b$ \\
		\hline
	\end{tabular}
	\caption{The possible frequencies of $s$ in partitions in the image of $\psi$.}
	\label{tab:tab2}
\end{table}
As discussed earlier, as $U_1, U_a$ and $U_b$ are pairwise disjoint, the
frequency of $s$ in a partition in the image of $\psi$ characterizes the case from which its preimage
comes.  The injectivity of $\psi$ over all cases follows from $\psi$ being injective in each case, which has been
demonstrated. 

The injection above shows that $$ |\{\pi \in {\corange I_{L,s,k}} : |\pi| = N \}| \geq |\{\pi \in D_{L,s} : |\pi| = N\}|,$$
for every $N \geq \gamma(L,s)$. To complete the proof of Theorem \ref{General},
we must prove that the inequality is in fact strict, so we find a partition
of $N$ that is in {\corange $I_{L,s,k}$} but not in the
image of $\psi$. {\cpurple Since $N \geq \gamma(L,s)$ is large enough, by Lemma
\ref{Frobenius} there
exist nonnegative integers $x_0$ and $y_0$ such that 
\begin{equation}\label{eq:outofrange}
	N = s \left( L+s+1 + \left(\delta_{k,s+2} \right) \eta_{s} \eta_{N-L}\right) + (s+\alpha)x_0 + (s+\beta)y_0.
\end{equation}
This gives a partition $$ \lambda_N = (s^{L+s+1 +
\left(\delta_{k,s+2} \right) \eta_{s} \eta_{N-L}}, (s+\alpha)^{x_0},
(s+\beta)^{y_0})$$ of $N$.  Again, the term $\left(\delta_{k,s+2} \right) \eta_s
\eta_{N-L}$ is to ensure that \eqref{eq:outofrange} has a solution even in the
case $k=s+2$ and $s$ is odd, as before.  It is easy to see $\lambda_N$ has the
desired properties.   Its frequency of $s$ is either $L+s+1$ or $L+s+2$ and its frequency of $k$ is 0, so
it is in $I_{L,s,k}$.  Furthermore, it is easy to see its frequency of $s$
is not a member of $U_1, U_a$ or
$U_b$, so $\lambda_N$ is not in the range of $\psi$}.
\end{proof}

\subsection{Proofs of Theorem \ref{MostGen} and Conjectures \ref{Main} and
\ref{Similar}}\label{sec:sim}

As noted earlier, while the bound in Theorem \ref{General} depends on both $L$
and $s$, we can use that theorem in tandem with  Theorems \ref{Largek} and
\ref{LargeL_Smallk} to give a proof of Theorem \ref{MostGen}.  For that, define
\begin{equation}
    \label{eq:defgammas}
    \Gamma(s) = \gamma(3s+2, s),
\end{equation}
where $\gamma(\cdot, \cdot)$ is defined as in \eqref{eq:defplsgammals}.

\begin{proof}[Proof of Theorem \ref{MostGen}]
    We show that if $N \geq \Gamma(s)$, the inequality
    \begin{equation*}
		|\{\pi \in {\corange  I_{L,s,k}} : |\pi| = N \}| > |\{\pi \in D_{L,s} : |\pi| = N\}|
    \end{equation*}
    holds.
    
    If $L \geq 3s+3$ and $2s+2 \leq k \leq L+s$, then by Theorem \ref{Largek}
    the inequality holds for all $N \geq \kappa'(s)$.  
    
    However, if $L \geq 3s+3$
    and $s+1 \leq k \leq 2s+1$, then by Theorem \ref{LargeL_Smallk} the
    inequality holds for all $N \geq \kappa''(s)$.   Thus, combining these two
    results, we see that for $L \geq 3s + 3$ and $s+1 \leq k \leq L+s$, if $N
    \geq \max(\kappa'(s), \kappa''(s))$ then the inequality holds.

    Finally, if $3 \leq L \leq 3s+2$ and $s+1 \leq k \leq L+s$, then by Theorem \ref{General}
	the inequality holds {\cred for all} $N \geq \gamma(3s+2,s) = \Gamma(s)$.
    It follows that for all $L \geq 3$ and $s+1 \leq k \leq L+s$, the inequality
    holds for all $N$ larger than the three constants $\kappa'(s), \kappa''(s)$
    and $\Gamma(s)$.  A simple comparison reveals that $\Gamma(s)$ is the
    largest of the three constants, and we give a short proof of this.  First, an easy
    calculation shows that $\kappa'(s) < 10^7 (s+1)^5$ and $\kappa''(s) < 10^{11}
    (s+1)^{10}$.  Next, we show that $\Gamma(s)$ is always larger than these numbers.
    Observe that $P_{3s+2,s} > (s+1)^{3s+2} \geq 32$ and thus $(P_{3s+2,s}^2-1) > 1000$.
    Using this, we find 
     \begin{align*}
     \Gamma(s) = \gamma(3s+2,s) &> P_{3s+2,s}^{P_{3s+2}^2-1} \\
     &> P_{3s+2,s}^{1000} \\
     &> (s+1)^{1000(3s+2)} \\
     &\geq (s+1)^{5000} \\
      &\geq 2^{4990}(s+1)^{10}  \\
      &> 10^{11} (s+1)^{10}   \\
      &> \max(\kappa'(s), \kappa''(s)), 
     \end{align*}
     completing the proof.
\end{proof}

\begin{remark}
If $s+2 \leq L \leq 3s+2$ and $k \geq 2s+2$, then we can use the bound $\kappa'(s)$ in
Theorem \ref{Largek} instead of the larger bound $\gamma(3s+2,s)$ suggested by the proof of Theorem \ref{MostGen}.
\end{remark}

As remarked in Section \ref{Intro}, setting $k=L+s-1$ and $k=L$ in Theorem
\ref{MostGen} proves Conjectures \ref{Main} and \ref{Similar}, respectively.  We state these
separately, however, in the next corollaries so that the bound $M$  for when the partition inequalities
hold is explicit. 

\begin{corollary}
    \label{thm:proofconj3}
    If $L \geq 3$ and $s$ are positive integers, then
\begin{equation*}
    |\{\pi \in C_{L,s} : |\pi| = N \}| \geq |\{\pi \in D_{L,s} : |\pi| =
	N\}|\;\;\; \textnormal{(Conjecture 1)}
\end{equation*}
and 
\begin{equation*}
    |\{\pi \in C^{*}_{L,s} : |\pi| = N \}| \geq |\{\pi \in D_{L,s} : |\pi| =
	N\}|\;\;\; \textnormal{(Conjecture 2)}
\end{equation*}
for all $N \geq \Gamma(s)$, where $\Gamma(s)$ is defined in
\eqref{eq:defgammas}.
\end{corollary}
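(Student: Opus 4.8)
The plan is to obtain Corollary \ref{thm:proofconj3} as an immediate specialization of Theorem \ref{MostGen}. Recall from Section \ref{Intro} that $C_{L,s}$ and $C^{*}_{L,s}$ are nothing but the sets $I_{L,s,L+s-1}$ and $I_{L,s,L}$, respectively. Hence the two displayed inequalities in the corollary are precisely the inequality in Theorem \ref{MostGen} with the impermissible part $k$ taken to be $L+s-1$ in the first case and $L$ in the second. Since Theorem \ref{MostGen} furnishes the strict inequality $>$ while the corollary only asserts $\geq$, all that remains is to check, in each case, that the chosen value of $k$ satisfies the hypothesis $s+1 \leq k \leq L+s$ of Theorem \ref{MostGen}, and then read off the bound $N \geq \Gamma(s)$, where $\Gamma(s)$ is the same constant defined in \eqref{eq:defgammas}.

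For the first inequality I would set $k = L+s-1$. The upper bound $L+s-1 \leq L+s$ is trivial, and the lower bound $s+1 \leq L+s-1$ is equivalent to $L \geq 2$, which holds because $L \geq 3$. Thus Theorem \ref{MostGen} applies and gives $|\{\pi \in C_{L,s} : |\pi| = N\}| > |\{\pi \in D_{L,s} : |\pi| = N\}|$ for all $N \geq \Gamma(s)$, establishing Conjecture \ref{Main} with $M = \Gamma(s)$. For the second inequality I would set $k = L$. Here $L \leq L+s$ is immediate since $s \geq 1$, and $s+1 \leq L$ is exactly the condition $L \geq s+1$ built into the definition of $C^{*}_{L,s}$; this is the only regime in which the second statement is meaningful, and the extra requirement $L \geq 3$ merely excludes the degenerate pair $(L,s) = (2,1)$. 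With $k = L$, Theorem \ref{MostGen} then yields $|\{\pi \in C^{*}_{L,s} : |\pi| = N\}| > |\{\pi \in D_{L,s} : |\pi| = N\}|$ for all $N \geq \Gamma(s)$, establishing Conjecture \ref{Similar} with the same bound.

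I do not anticipate any genuine obstacle: the argument is a two-line deduction once Theorem \ref{MostGen} is in hand, and the only points demanding a moment's care are the elementary verifications that $k \in \{s+1, \ldots, L+s\}$ in each case, together with the observation that the second inequality presupposes $L \geq s+1$ so that $C^{*}_{L,s}$ is defined. Finally, I would remark that the constant $M$ whose existence is posited in Conjectures \ref{Main} and \ref{Similar} is here made completely explicit: it may be taken to be $\Gamma(s) = \gamma(3s+2,s)$, which depends only on $s$.
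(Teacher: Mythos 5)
Your proposal is correct and is essentially the paper's own argument: the paper likewise obtains the corollary by specializing Theorem \ref{MostGen} to $k=L+s-1$ and $k=L$, noting that these are the cases $C_{L,s}=I_{L,s,L+s-1}$ and $C^{*}_{L,s}=I_{L,s,L}$, with the bound $\Gamma(s)$ carried over unchanged. Your explicit verification that $k$ lies in $\{s+1,\ldots,L+s\}$ in each case, and the remark about the $L\geq s+1$ proviso for $C^{*}_{L,s}$, are exactly the right (and only) points to check.
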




\subsection{The proof of Theorem \ref{Genk}}\label{sec:bigk}

In the previous section, we proved Theorem \ref{MostGen}, and, as remarked in
Section \ref{Intro}, this proves Theorem \ref{Genk} in the cases $s+1 \leq k \leq
L+s$.   We will, however, be able to use Theorem \ref{MostGen} to prove Theorem
\ref{Genk} in general.  We first prove some lemmas.

%
%

\begin{lemma}
\label{Helpful}
	For positive integers $L \geq 3$, $s$ and $ k \geq s+1$, the difference $H_{L,s,k+s}(q) -  H_{L,s,k}(q)$ is nonnegative.
\end{lemma}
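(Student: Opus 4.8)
The plan is to compute the difference $H_{L,s,k+s}(q) - H_{L,s,k}(q)$ directly from the definition of $H_{L,s,k}(q)$ and show that it has nonnegative coefficients by interpreting it combinatorially. Writing out the definition, the terms $-\left(\frac{1}{(q^{s+1};q)_L}-1\right)$ cancel, and we are left with
\begin{equation*}
	H_{L,s,k+s}(q) - H_{L,s,k}(q) = \frac{q^s\left((1-q^{k+s}) - (1-q^k)\right)}{(q^s;q)_{L+1}} = \frac{q^s(q^k - q^{k+s})}{(q^s;q)_{L+1}} = \frac{q^{k}(1-q^s)q^s}{(q^s;q)_{L+1}}.
\end{equation*}
Now $(q^s;q)_{L+1} = (1-q^s)(q^{s+1};q)_L$, so the factor $(1-q^s)$ cancels, leaving
\begin{equation*}
	H_{L,s,k+s}(q) - H_{L,s,k}(q) = \frac{q^{k+s}}{(q^{s+1};q)_L}.
\end{equation*}

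First I would verify the above algebra carefully (this is the only computation, and it is short). Then the remaining step is to observe that $\frac{1}{(q^{s+1};q)_L} = \prod_{i=1}^{L}\frac{1}{1-q^{s+i}} = \sum_{n \geq 0} p_{L,s}(n) q^n$, where $p_{L,s}(n)$ is the number of partitions of $n$ with all parts in $\{s+1, \ldots, s+L\}$; in particular all these coefficients are nonnegative. Multiplying by the monomial $q^{k+s}$ merely shifts this nonnegative series, so every coefficient of $H_{L,s,k+s}(q) - H_{L,s,k}(q)$ is nonnegative, which is exactly the claim.

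I do not anticipate a genuine obstacle here: the lemma is essentially an algebraic identity followed by the trivial observation that a generating function for a counting problem has nonnegative coefficients. The only place to be careful is the cancellation of the $(1-q^s)$ factor and making sure the Pochhammer indices match — i.e., that $(q^s;q)_{L+1}$ really does equal $(1-q^s)(q^{s+1};q)_L$, which follows immediately from the definition of $(a;q)_n$ given in the paper. One should also note that the identity and hence the conclusion hold for all $k \geq s+1$ (indeed for all $k \geq 1$), with no combinatorial interpretation of the individual series needed, which is precisely why this lemma is useful for extending Theorem \ref{MostGen} to large $k$ in the proof of Theorem \ref{Genk}.
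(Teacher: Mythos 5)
Your proposal is correct and is essentially the paper's own argument: the paper computes the same difference, cancels the numerator factor against the matching factor of $(q^s;q)_{L+1}$, and concludes nonnegativity from the resulting partition generating function. The only difference is that the paper records the slightly more general fact that $H_{L,s,k+i}(q)-H_{L,s,k}(q)\succeq 0$ for every $s\leq i\leq L+s$ (since $(1-q^i)$ is then a factor of the denominator), of which your computation is the case $i=s$.
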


\begin{proof}
	The lemma is stated in the simplest form that we need.  We shall, however,
	prove a stronger statement:   for $L,s$ and $k$ as in the lemma, the difference $H_{L,s,k+i}(q) -  H_{L,s,k}(q)$ is nonnegative for all
	$s \leq i \leq L+s$.

	We have $$ H_{L,s,k+i}(q) - H_{L,s,k}(q) =
	\frac{q^{s+k}(1-q^i)}{(1-q^s)(1-q^{s+1}) \cdots (1-q^{L+s})}. $$ For $s \leq i
	\leq L+s$, the factor $(1-q^i)$ in the numerator is also present in the
	denominator, so it cancels.  Hence the difference $H_{L,s,k+i}(q) -
	H_{L,s,k}(q)$ is nonnegative.
\end{proof}

From Theorem \ref{MostGen} and Lemma \ref{Helpful}, it follows that $H_{L,s,k}(q)$
is eventually positive for all $k \geq s+1$ whenever $L \geq s$.  However, we are still
left with various cases when $L < s$. For example, if $s=10$ and $L=3$, then
Theorem \ref{MostGen} shows that
$H_{L,s,k}(q)$ is eventually positive whenever $k$ is $11, 12$ or $13$. Then,
according to Lemma \ref{Helpful}, the series $H_{L,s,k}(q)$ is eventually positive whenever
$k$ is $21, 22$ or $23$, which leaves the gap $14 \leq k \leq
20$.  To complete the proof of Theorem \ref{Genk},  a close analysis of the cases
covered by a combination of Theorem \ref{MostGen} and Lemma \ref{Helpful} gives
that, when
$L < s$, it suffices to prove that $H_{L,s,k}(q)$ is
eventually positive whenever $L+s < k \leq 2s$.  We do so in Lemma
\ref{thm:lemfinal}, but before that we need another lemma. 
\begin{lemma}
\label{Conjecture3}
	For positive integers $L \geq 3$ and $s$, the coefficient of $q^N$ in the series
	\begin{equation*}
	\frac{q^s-q^{L+s-1}}{(q^s;q)_{L+1}} 
	\end{equation*} 
    is positive whenever $N \geq \gamma(s,s)$, where $\gamma(\cdot, \cdot)$ is
    defined in \eqref{eq:defplsgammals}.
\end{lemma}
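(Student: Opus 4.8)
The plan is to give a combinatorial interpretation of the coefficient of $q^N$ in the series
$$ \frac{q^s - q^{L+s-1}}{(q^s;q)_{L+1}} $$
and then reduce the claim to Theorem \ref{General} (equivalently, to the partition inequality proved there). First I would observe that $1/(q^s;q)_{L+1}$ is the generating function for partitions into parts from $\{s, s+1, \ldots, L+s\}$, so that $q^s/(q^s;q)_{L+1}$ counts such partitions with at least one part equal to $s$ — that is, partitions whose smallest part is exactly $s$ and all of whose parts are $\leq L+s$. Subtracting $q^{L+s-1}/(q^s;q)_{L+1}$ removes, from each such configuration, a bijective copy in which a part $L+s-1$ has been adjoined; a short inclusion–exclusion or telescoping argument then identifies the coefficient of $q^N$ in $\dfrac{q^s - q^{L+s-1}}{(q^s;q)_{L+1}}$ with
$$ |\{\pi : s(\pi) = s,\ \text{all parts} \leq L+s,\ L+s-1 \text{ does not appear}\}| = |\{\pi \in C_{L,s} : |\pi| = N\}|, $$
where I use that $C_{L,s} = I_{L,s,L+s-1}$ as recorded in the excerpt. (Here one has to be slightly careful: the cancellation of the $q^{L+s-1}$-weighted copies is exactly the statement that a partition with smallest part $s$, parts $\le L+s$, and at least one part equal to $L+s-1$ is in bijection with an arbitrary partition with smallest part $s$ and parts $\le L+s$ — delete one copy of $L+s-1$ — which is the standard "frequency of $L+s-1$ is $\ge 1$" device.)

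Once this identification is in place, the lemma reduces to showing that $|\{\pi \in C_{L,s} : |\pi| = N\}|$ is positive for all $N \geq \gamma(s,s)$. But Theorem \ref{General}, applied with $k = L+s-1$ (which satisfies $s+1 \le k \le L+s$ whenever $L \ge 2$, and in particular for $L \ge 3$), already gives the much stronger statement
$$ |\{\pi \in C_{L,s} : |\pi| = N\}| > |\{\pi \in D_{L,s} : |\pi| = N\}| \ge 0 $$
for all $N \ge \gamma(L,s)$. The only remaining point is the bound: the lemma asserts positivity from $\gamma(s,s)$ onward, whereas Theorem \ref{General} gives it from $\gamma(L,s)$ onward, and $\gamma(L,s)$ grows with $L$. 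So I would instead argue positivity of the coefficient directly and uniformly in $L$: for $N \ge \gamma(s,s) \ge s$, Lemma \ref{Simple} (or Sylvester's Lemma \ref{Frobenius}) produces a partition of $N$ into parts from $\{s+1, \ldots, 2s+1\}$ together with enough parts equal to $s$, arranged so that the part $L+s-1$ never appears (which is automatic once $L+s-1 > 2s+1$, i.e. $L > s+2$, and handled by a small separate construction when $L \le s+2$, e.g. padding with one extra part $s$). This produces an explicit element of $C_{L,s}$ of size $N$, hence the coefficient is $\ge 1$.

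The main obstacle I anticipate is bookkeeping around the bound $\gamma(s,s)$ rather than any conceptual difficulty: one must check that $\gamma(s,s)$ is large enough to run the Lemma \ref{Simple} / Frobenius construction for \emph{every} $L \ge 3$ simultaneously, including the small-$L$ cases where $L+s-1$ could collide with the parts $\{s+1, \ldots, 2s+1\}$ used in the construction. The cleanest route is probably: (1) establish the generating-function identity coefficient $= |\{\pi \in C_{L,s} : |\pi|=N\}|$; (2) when $L \ge s+3$, quote Theorem \ref{Large} (which gives strict positivity for $N \ge \kappa(s)$, and one checks $\gamma(s,s) \ge \kappa(s)$) or just exhibit the explicit partition above; (3) when $3 \le L \le s+2$, exhibit an explicit partition of $N$ in $C_{L,s}$ for all $N \ge \gamma(s,s)$ using Sylvester's lemma with coprime parts chosen to avoid $L+s-1$. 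Each sub-case is a short computation, and $\gamma(s,s)$ is visibly enormous compared to any Frobenius number of integers of size $O(s)$, so the verification that $\gamma(s,s)$ suffices is routine.
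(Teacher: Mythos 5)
There is a genuine gap at the very first step: your combinatorial interpretation of the series is wrong. You treat $q^s - q^{L+s-1}$ as if it were $q^s(1-q^{L+s-1})$, but these differ. The generating function for $C_{L,s}=I_{L,s,L+s-1}$ is $\tfrac{q^s(1-q^{L+s-1})}{(q^s;q)_{L+1}}=\tfrac{q^s-q^{L+2s-1}}{(q^s;q)_{L+1}}$, not the series in the lemma. The coefficient of $q^N$ in $\tfrac{q^s-q^{L+s-1}}{(q^s;q)_{L+1}}$ is $|A_N|-|B_N|$, where $A_N$ is the set of partitions of $N$ with parts in $\{s,\dots,L+s\}$ containing at least one part $s$, and $B_N$ is the set of those containing at least one part $L+s-1$; the bijection you invoke (delete a copy of $L+s-1$) identifies $\tfrac{q^{s+(L+s-1)}}{(q^s;q)_{L+1}}$, not $\tfrac{q^{L+s-1}}{(q^s;q)_{L+1}}$, with the partitions containing both $s$ and $L+s-1$. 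A concrete counterexample: for $s=2$, $L=3$, $N=4$ the coefficient is $0$ (it counts partitions of $2$ into parts from $\{3,4,5\}$), while $|\{\pi\in C_{3,2}:|\pi|=4\}|=1$ because of $(2,2)$. Since the coefficient is a genuine difference of two cardinalities rather than the size of a single set, your steps (2) and (3) — exhibiting one explicit partition in $C_{L,s}$ — prove nothing about its sign, and the entire reduction collapses.

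The correct argument is forced to produce an injection from $B_N$ into $A_N$. When $L\ge s+1$ this is easy for every $N\ge 1$: replace a part $L+s-1$ by the two parts $s$ and $L-1$ (both admissible since $L-1\ge s$). The hard case is $L\le s$, which is exactly the opposite of the case you flag as problematic; there one writes $A_N=F_N\setminus D_N$ and $B_N=F_N\setminus E_N$, where $F_N$ is all partitions of $N$ with parts in $\{s,\dots,L+s\}$, $D_N$ those with no part $s$, and $E_N$ those with no part $L+s-1$, so that $|A_N|-|B_N|=|E_N|-|D_N|\ge |C_N|-|D_N|$, and the last difference is positive by Theorem \ref{General} with $k=L+s-1$. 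Your worry that the bound $\gamma(L,s)$ "grows with $L$" is resolved precisely because this invocation only occurs for $L\le s$, where $\gamma(L,s)\le\gamma(s,s)$. Note also that the full strength of Theorem \ref{General} (an injection, not mere nonemptiness) is indispensable here — no single exhibited partition can certify that a difference of cardinalities is positive.
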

\begin{proof}
	Given a natural number $N$, 
\begin{itemize}
	\item $A_N$ is the set of partitions of $N$ with parts in $\{s,  \ldots,
			L+s\}$,  and $s$ appears as a part at least once;
	\item $B_N$ is the set of partitions of $N$ with parts in $\{s,  \ldots,
			L+s\}$,  and $L+s-1$ appears as a part at least once.
\end{itemize}
Proving the lemma is equivalent to showing that  $|A_N| \geq |B_N|$ for
all $N \geq \gamma(s,s)$. When $L \geq s+1$, this is easy to show for all $N
\geq 1$;  if $B_N$ is nonempty and $\pi \in B_N$, remove a
part of size $L+s-1$ from $\pi$ and insert  parts $s$ and $t$, where $t = L-1 \geq
s$.  This process clearly defines an injective function from $B_N$ to $A_N$.  We must therefore deal with the
case when $L \leq s$.  For that, given a natural number $N$, 
\begin{itemize}
	\item $C_N$ is the set of partitions of $N$ where the smallest
			part is $s$, all parts are $\leq L+s$, and $L+s-1$ does not
			appear as a part; 
	\item $D_N$ is the set of partitions of $N$
			with parts in the set $\{s+1,  \ldots,
			L+s\}$; 
	\item $E_N$ is the set of partitions of $N$ with parts in the set $\{s,  \ldots,
			L+s\}$, and $L+s-1$ does not
			appear as a part; 
	\item $F_N$ is the set of partitions of $N$ with parts in the set $\{s,  \ldots,
			L+s\}$.
\end{itemize}
Notice that $C_N = \{\pi \in C_{L, s} : |\pi| = N\}$ and $D_N = \{\pi \in D_{L,s}
	: |\pi| = N\}$. 
    We could use Corollary \ref{thm:proofconj3} here, but since $L \leq s$, we
    can use Theorem \ref{General} to obtain a stronger bound by setting $k =
    L+s-1$ there;  the inequality $|C_N| > |D_N|$ then holds for all $N \geq
    \gamma(s,s)$.   Since $C_N \subset E_N$, we also
have $|E_N| \geq |D_N|$ for all $N \geq \gamma(s,s)$. Notice that $B_N = F_N \setminus E_N$ and
$A_N = F_N \setminus D_N$. Hence $|A_N|\geq |B_N|$ for all $N \geq \gamma(s,s)$. 
\end{proof}
As noted above, the following result completes the proof of Theorem \ref{Genk}.

\begin{lemma}\label{thm:lemfinal}
	For positive integers $L$, $s$ and $k$ such that $3 \leq L < s$ and $L+s \leq k
	\leq 2s$, the coefficient of $q^N$ in $H_{L,s,k}(q)$ is positive whenever $N
    \geq \Gamma(s)$.
\end{lemma}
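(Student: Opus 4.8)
The plan is to reduce Lemma \ref{thm:lemfinal} to the range $s+1 \le k \le L+s$ already handled by Theorem \ref{MostGen}, by peeling off a ``correction series'' that is eventually nonnegative. First note that for $k=L+s$ the assertion is immediate from Theorem \ref{MostGen}, so we may assume $L+s<k\le 2s$ and set $j=k-(L+s)+1$, so that $2\le j\le s-L+1$. Straight from the definition of $H_{L,s,k}(q)$ one has, for any $k'<k$, the elementary identity
\begin{equation*}
	H_{L,s,k}(q)-H_{L,s,k'}(q)=\frac{q^{s+k'}\left(1-q^{k-k'}\right)}{(q^s;q)_{L+1}}.
\end{equation*}
Choosing $k'=L+s-1$ (which satisfies $s+1\le k'\le L+s$ since $L\ge 3$) yields the decomposition
\begin{equation*}
	H_{L,s,k}(q)=H_{L,s,L+s-1}(q)+\frac{q^{2s+L-1}\left(1-q^{j}\right)}{(q^s;q)_{L+1}}.
\end{equation*}

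By Theorem \ref{MostGen} applied with impermissible part $L+s-1$, the coefficient of $q^N$ in $H_{L,s,L+s-1}(q)$ is positive for all $N\ge \Gamma(s)$. Hence it suffices to show that the coefficient of $q^N$ in the correction series $E(q):=q^{2s+L-1}(1-q^{j})/(q^s;q)_{L+1}$ is nonnegative once $N\ge\Gamma(s)$; then the coefficient of $q^N$ in $H_{L,s,k}(q)$, being a sum of a positive and a nonnegative integer, is positive. Writing $P(m)$ for the number of partitions of $m$ into parts from $\{s,s+1,\ldots,s+L\}$, so that $1/(q^s;q)_{L+1}=\sum_m P(m)q^m$, the coefficient of $q^N$ in $E(q)$ equals $P(m)-P(m-j)$ with $m=N-2s-L+1$. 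Thus the lemma reduces to the inequality $P(m)\ge P(m-j)$ for all large $m$.

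The main work is to prove $P(m)\ge P(m-j)$ for $m$ beyond an explicit threshold, and — crucially — a threshold comfortably below $\Gamma(s)$, which leaves ample room since $\Gamma(s)$ is astronomically large in $s$. When $j=L-1$ this is exactly Lemma \ref{Conjecture3} reindexed, since $q^s(1-q^{L-1})/(q^s;q)_{L+1}=(q^s-q^{L+s-1})/(q^s;q)_{L+1}$. For general $j$ I would argue by an injection of the type used repeatedly in this paper: given a partition $\mu$ of $m-j$ with parts in $\{s,\ldots,s+L\}$, for $m$ large $\mu$ has far more than $s$ parts, so one can remove a block of parts of total size $\Sigma\ge s^2$; since $\Sigma+j\ge s^2>(s-1)s$ exceeds the Frobenius number of $s$ and $s+1$, Lemma \ref{Frobenius} allows us to write $\Sigma+j=s\,x+(s+1)\,y$ with $x,y\ge 0$, and re-inserting $x$ parts equal to $s$ and $y$ parts equal to $s+1$ produces a partition of $m$ with parts in $\{s,\ldots,s+L\}$. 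Making the choice of block and the re-insertion canonical (for instance, operating only on the frequency of a single sufficiently abundant part size together with the frequencies of $s$ and $s+1$, and recording that abundant size so the operation can be inverted) gives an injection, hence $P(m)\ge P(m-j)$. Alternatively one can bypass the injection entirely and observe that $P(m)-P(m-j)$ is eventually a quasi-polynomial in $m$ of degree $L-1$ whose leading coefficient is the positive constant $j/((L-1)!\,s(s+1)\cdots(s+L))$, so it is positive for $m$ beyond a threshold that is only polynomial in $s$.

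I expect the sole genuine obstacle to be making this last step fully rigorous with an explicit constant: the nonnegativity of the correction series is transparent asymptotically, but the paper's standard of explicit bounds forces one either to verify the reversibility of the injection through its handful of small cases (for example, $\mu$ saturated with parts $s+L$, or the abundant part size coinciding with $s$ or $s+1$), or to control the lower-order periodic terms of the quasi-polynomial $P(m)-P(m-j)$. Either route is routine, especially given the enormous slack between the true threshold and $\Gamma(s)$.
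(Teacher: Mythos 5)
Your algebraic reduction is sound as far as it goes: the identity $H_{L,s,k}(q)-H_{L,s,L+s-1}(q)=q^{2s+L-1}(1-q^{j})/(q^s;q)_{L+1}$ with $j=k-L-s+1$ is correct, and Theorem \ref{MostGen} does handle $H_{L,s,L+s-1}(q)$. But the entire difficulty of the lemma is now concentrated in the step you defer, namely $P(m)\ge P(m-j)$ with an explicit threshold, and here $j$ ranges up to $s-L+1$, which for $L<s$ is much larger than $L$. Neither of your two routes closes this. The injection is not constructed: removing an unspecified ``block'' of total size $\Sigma$ and reinserting $x$ parts $s$ and $y$ parts $s+1$ destroys the information needed to invert the map (which parts of sizes $s$ and $s+1$ in the image are new, and what the removed block was), and unlike every injection in this paper there is no structural asymmetry between domain and codomain (no required part, no impermissible part) on which to hang a case-encoding via the frequency of $s$. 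The quasi-polynomial route is exactly the asymptotic Frobenius--Schur argument the paper is at pains to avoid; making its lower-order periodic terms explicit is a substantial computation, not a routine one. So as written there is a genuine gap.

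The paper's proof shows how to avoid ever needing monotonicity for a general shift $j$: it proceeds by strong induction on $k$ from the base case $k=L+s$, and at each step compares $H_{L,s,i+1}(q)$ with $H_{L,s,i-L+2}(q)$, a downward jump of exactly $L-1$. The difference is then $q^{i-L+2}(q^s-q^{L+s-1})/(q^s;q)_{L+1}$, i.e.\ precisely the shift-by-$(L-1)$ monotonicity statement, which is Lemma \ref{Conjecture3}; and that lemma has a short proof by complementation ($B_N=F_N\setminus E_N$, $A_N=F_N\setminus D_N$) precisely because both $s$ and $L+s-1$ are admissible part sizes --- a trick unavailable for your shift $s+j$, which lies outside $\{s,\dots,L+s\}$ when $j>L$. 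The term $H_{L,s,i-L+2}(q)$ is then positive by the induction hypothesis together with Theorem \ref{MostGen}, since $s+1\le i-L+2\le i$. If you want to salvage your approach, replace the single jump from $L+s-1$ to $k$ by a chain of jumps of size $L-1$; that is the paper's argument.
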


\begin{proof}
	Our proof is by strong induction. The base case ($k=L+s$) has
	already been proven in Theorem \ref{MostGen}.  Next, assume that for some $i$ such that
	$L+s \leq i < 2s$,  the coefficient of $q^N$ in $H_{L,s,j}(q)$ is positive whenever $N \geq \Gamma(s)$ for all $L+s \leq j \leq i$. We
	shall prove that the coefficient of $q^N$ in $H_{L,s,i+1}(q)$ is also positive whenever $N \geq \Gamma(s)$. Consider the difference
    \begin{align*} 
        H_{L,s,i+1}(q) - H_{L,s,i-L+2}(q) &= \frac{q^s(q^{i-L+2}-q^{i+1})}{(q^s;q)_{L+1}} \\ 
            &= \frac{q^{i-L+2}(q^s-q^{L+s-1})}{(q^s;q)_{L+1}}. 
    \end{align*} 
    Thus,  from Lemma \ref{Conjecture3}, the coefficient of $q^N$ in the difference $H_{L,s,i+1}(q) -
	H_{L,s,i-L+2}(q)$ is positive whenever $N \geq \gamma(s,s)+2s$ (because $i < 2s$).
	It is easy to verify, for any positive integer $s$, that $\Gamma(s) \geq \gamma(s,s)+2s$. Thus, 
	the coefficient of $q^N$ in $H_{L,s,i+1}(q) - H_{L,s,i-L+2}(q)$ is positive whenever $N \geq \Gamma(s)$.
	
	  The induction hypothesis states that the coefficient of $q^N$ in $H_{L,s,j}(q)$ is positive whenever $N \geq \Gamma(s)$ for all $L+s \leq j \leq
	i$. Combining this with Theorem \ref{MostGen}, the coefficient of $q^N$ in
    $H_{L,s,j}(q)$ is positive whenever $N \geq \Gamma(s)$  for all $s+1 \leq j
    \leq i$. Since $L \geq 3$ and $i \geq L+s$, we have $s+1 \leq
	i-L+2 \leq i$, and thus the coefficient of $q^N$ in $H_{L,s,i-L+2}(q)$ is positive whenever $N \geq \Gamma(s)$.
	
	Thus, we have shown whenever $N \geq \Gamma(s)$ that the coefficient of $q^N$ in both the series $H_{L,s,i+1}(q) - H_{L,s,i-L+2}(q)$ and $H_{L,s,i-L+2}(q)$ is positive. This shows that 
	the coefficient of $q^N$ in $H_{L,s,i+1}(q)$ is positive whenever $N \geq
	\Gamma(s)$, completing the induction argument. 
	\end{proof}

We collect all of these results together to complete the proof of Theorem \ref{Genk},
which, as noted in Section \ref{Intro}, generalizes Conjecture \ref{Analytic} and Theorem \ref{MostGen}.

\begin{proof}[Proof of Theorem \ref{Genk}]
	Suppose $N \geq \Gamma(s)$.  If $L \geq s$, then Theorem \ref{MostGen} and Lemma \ref{Helpful} immediately prove Theorem \ref{Genk}. 

If $L < s$ and $s+1 \leq k \leq L+s$, then Theorem \ref{General} completes the proof of
Theorem \ref{Genk}. If $L < s$ and $L+s < k \leq 2s$, then Lemma \ref{thm:lemfinal}
completes the proof of Theorem \ref{Genk}. Thus, combining these two cases, the theorem
holds for $L < s$ and $s+1 \leq k \leq 2s$. Since the result holds for $L < s$
and $s+1 \leq k \leq 2s$, an application of Lemma \ref{Helpful} covers the cases
$L < s$ and $k > 2s$.  This covers all cases and completes the proof.
\end{proof}

\section{Proof of Conjecture \ref{s=2}}
\label{GL2}

In this section, we prove Conjecture \ref{s=2}, which pertains to the series
$G_{L,2}(q)$.  In \cite{BerkovichAlexander2017SEPI},
Berkovich and Uncu found an alternative expression for $G_{L,2}(q)$.
\begin{theorem}[Berkovich and Uncu (2019)] For $L \geq 3$, 
\label{OtherGL2}
$$G_{L,2}(q) = \frac{H_{L,2,L}(q) }{1-q^L}.$$ 
\end{theorem}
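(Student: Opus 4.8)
The plan is to rewrite both sums in the definition of $G_{L,2}(q)$ as explicit $q$-series, reduce the identity to the evaluation of one telescoping sum, and then recombine. First I would observe that the first sum ranges over nonempty partitions whose smallest part equals $2$ (so that $l(\pi)\le L+2$); these are precisely the partitions with all parts in $\{2,3,\dots,L+2\}$ in which the part $2$ occurs, so that sum equals $q^{2}/(q^{2};q)_{L+1}$. For the second sum I would split according to the value $j=s(\pi)\ge 3$: for each fixed $j$ the partitions counted have all parts in $\{j,j+1,\dots,j+L\}$ with $j$ occurring, contributing $q^{j}/(q^{j};q)_{L+1}$, and since this term has order $j$, the series $R:=\sum_{j\ge 3}q^{j}/(q^{j};q)_{L+1}$ is a well-defined element of $\mathbb{Z}[[q]]$. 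Hence $G_{L,2}(q)=q^{2}/(q^{2};q)_{L+1}-R$.

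The core of the proof is to evaluate $R$ in closed form. The key elementary identity, immediate from $(q^{j};q)_{L+1}=(1-q^{j})(q^{j+1};q)_{L}$, is
\begin{equation*}
 \frac{q^{j}}{(q^{j};q)_{L+1}}=\frac{1}{(q^{j};q)_{L+1}}-\frac{1}{(q^{j+1};q)_{L}}.
\end{equation*}
Summing this from $j=3$ to $j=M$ and combining the surviving consecutive terms via $\dfrac{1}{(q^{i};q)_{L+1}}-\dfrac{1}{(q^{i};q)_{L}}=\dfrac{q^{i+L}}{(q^{i};q)_{L+1}}$, I obtain
\begin{equation*}
 \sum_{j=3}^{M}\frac{q^{j}}{(q^{j};q)_{L+1}}=\frac{1}{(q^{3};q)_{L+1}}+\sum_{i=4}^{M}\frac{q^{i+L}}{(q^{i};q)_{L+1}}-\frac{1}{(q^{M+1};q)_{L}}.
\end{equation*}
Now I would let $M\to\infty$ in $\mathbb{Z}[[q]]$: the residual term $1/(q^{M+1};q)_{L}$ tends to $1$ and the middle sum converges to $q^{L}\bigl(R-q^{3}/(q^{3};q)_{L+1}\bigr)$, so that $R=1/(q^{3};q)_{L+1}-1+q^{L}\bigl(R-q^{3}/(q^{3};q)_{L+1}\bigr)$. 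Solving for $R$ and using $\dfrac{1-q^{L+3}}{(q^{3};q)_{L+1}}=\dfrac{1}{(q^{3};q)_{L}}$ gives
\begin{equation*}
 R=\frac{1}{1-q^{L}}\left(\frac{1}{(q^{3};q)_{L}}-1\right).
\end{equation*}

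Substituting this into $G_{L,2}(q)=q^{2}/(q^{2};q)_{L+1}-R$ and multiplying by $1-q^{L}$ gives
\begin{equation*}
 (1-q^{L})\,G_{L,2}(q)=\frac{q^{2}(1-q^{L})}{(q^{2};q)_{L+1}}-\left(\frac{1}{(q^{3};q)_{L}}-1\right)=H_{L,2,L}(q),
\end{equation*}
which is exactly the desired identity $G_{L,2}(q)=H_{L,2,L}(q)/(1-q^{L})$.

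The one point needing care, and the only real obstacle, is the passage to the limit in the telescoping step. One must not write $R=\sum_{j\ge 3}1/(q^{j};q)_{L+1}-\sum_{j\ge 4}1/(q^{j};q)_{L}$, since each of those series diverges in $\mathbb{Z}[[q]]$; it is precisely by keeping the partial sums and tracking the residual $1/(q^{M+1};q)_{L}\to 1$ that one recovers the $-1$ in the formula for $R$ (equivalently, the $+1$ concealed inside $H_{L,2,L}(q)$). Everything else is routine bookkeeping with $q$-Pochhammer symbols.
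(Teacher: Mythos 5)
Your argument is correct. Note that the paper itself offers no proof of this identity --- it is quoted from Berkovich and Uncu --- so there is nothing internal to compare against; your derivation stands as a self-contained verification. Each step checks out: the generating-function identifications $\sum_{s(\pi)=2}q^{|\pi|}=q^2/(q^2;q)_{L+1}$ and $R=\sum_{j\ge 3}q^j/(q^j;q)_{L+1}$ are right, the two elementary Pochhammer identities
\[
\frac{q^{j}}{(q^{j};q)_{L+1}}=\frac{1}{(q^{j};q)_{L+1}}-\frac{1}{(q^{j+1};q)_{L}},\qquad \frac{1}{(q^{i};q)_{L+1}}-\frac{1}{(q^{i};q)_{L}}=\frac{q^{i+L}}{(q^{i};q)_{L+1}}
\]
are correct, and the resulting functional equation $(1-q^L)R=\tfrac{1}{(q^3;q)_L}-1$ follows from the limit $M\to\infty$ exactly as you describe. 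Your care about the divergence of the two separated series and the residual term $1/(q^{M+1};q)_L\to 1$ is exactly the point at which a sloppy telescoping would lose the constant $-1$, and you handle it properly. This telescoping of the sum over smallest parts is the natural route to the identity and is in the same spirit as the $q$-series manipulations in the original Berkovich--Uncu paper.
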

From Theorem \ref{Genk}, we know that $H_{L,2,L}(q)$ is
eventually positive, as $H_{L,2,L}(q)$ is the particular case of $s=2$ and $k=L$ in that
    theorem, and from there it can be shown that  $G_{L,2}(q)$ is also
	eventually positive. However,  to prove Conjecture \ref{s=2}, we need to prove
	that the required series is not merely eventually positive, but that
    all its coefficients, with the exception of a few small terms, are nonnegative.
	We therefore need a method for the particular case $s=2$ in the series
	$H_{L,s,L}(q)$ that analyzes the coefficients of $q^n$ for small $n$.  As for the previous conjectures, our methods highly depend on Lemma \ref{Frobenius} and Lemma \ref{Simple}.

Recall the following notation from Section \ref{Intro}, which we require throughout this
section. For a positive integer $L \geq 3$, 

\begin{itemize}
    \item $C^*_{L,2} = I_{L,2,L}$ denotes the set of partitions such that the smallest part is $2$, all parts are $ \leq L+2$, and $L$ is not a part; 
\item $D_{L,2}$ denotes the set of nonempty partitions with parts
            in the set $\{3,4,\ldots,L+2\}$. 
\end{itemize}

From \eqref{eq:coeffHdef}, for $N \geq 0$, the coefficient of $q^N$ in $H_{L,2,L}(q)$ is
\begin{equation*}
	|\{\pi \in C^*_{L,2} : |\pi| = N\}| - |\{\pi \in D_{L,2} : |\pi| =N\}|.
\end{equation*}
We use this combinatorial interpretation along with Theorem \ref{OtherGL2} to obtain
information about $G_{L,2}(q)$.
We prove Conjecture \ref{s=2} by first proving it for large $L$ (i.e., $L \geq
11$), and then we prove it for smaller values (i.e., for $5 \leq L \leq 10$).
The cases $L=3$ and $L=4$ will be done separately afterwards.

\begin{theorem}\label{thm:glbig}
For $L \geq 11$, 
\begin{equation*}
    G_{L,2}(q)+q^3  \succeq 0.
\end{equation*}
\end{theorem}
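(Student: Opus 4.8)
The plan is to prove Theorem~\ref{thm:glbig} by leveraging Theorem~\ref{OtherGL2}, which gives $G_{L,2}(q) = H_{L,2,L}(q)/(1-q^L)$, together with the combinatorial interpretation of the coefficients of $H_{L,2,L}(q)$ as $|\{\pi \in C^*_{L,2} : |\pi| = N\}| - |\{\pi \in D_{L,2} : |\pi| = N\}|$. Writing $1/(1-q^L) = \sum_{j \geq 0} q^{jL}$, the coefficient of $q^N$ in $G_{L,2}(q)$ is $\sum_{j \geq 0} c_{N-jL}$, where $c_m$ is the coefficient of $q^m$ in $H_{L,2,L}(q)$. So I need to show this partial-sum-along-an-arithmetic-progression is $\geq -[N=3]$ for every $N$. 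Combinatorially, the coefficient of $q^N$ in $G_{L,2}(q)$ counts $|\{\pi : s(\pi)=2,\ l(\pi)-s(\pi)\leq L,\ |\pi|=N\}| - |\{\pi : s(\pi)\geq 3,\ l(\pi)-s(\pi)\leq L,\ |\pi|=N\}|$ directly, so the cleanest route is to construct an injection from the second set of partitions into the first, valid for all $N \neq 3$ (and to check $N=3$ by hand, where the smallest-part-2 side is empty and the $q^3$ compensates for the partition $(3)$).

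First I would handle the small values of $N$ directly: since $L \geq 11$, for $N$ below roughly $2L$ the relevant partitions are short and can be enumerated, and one checks the inequality holds with the single exception at $N=3$. For large $N$, I would build the injection. Given $\pi$ with $s(\pi) \geq 3$ and $l(\pi) - s(\pi) \leq L$, I want to produce $\pi'$ with $s(\pi')=2$, the same size, and $l(\pi') - 2 \leq L$, i.e.\ all parts of $\pi'$ are $\leq L+2$. The natural idea, echoing the injections of Section~\ref{Proofs}, is to peel off a controlled amount from $\pi$, replace it with a block of $2$'s plus some parts in the range $\{3,\ldots,L+2\}$ chosen via Lemma~\ref{Simple} or Lemma~\ref{Frobenius}, and track a bookkeeping statistic (here, the frequency of $2$, or of $2$ modulo some period) so that the map can be inverted. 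Since $s(\pi) \geq 3$, no part of $\pi$ equals $2$, so inserting $2$'s is unambiguous; the constraint to respect is that $l(\pi)$ may be as large as $L + s(\pi)$, which can exceed $L+2$, so any large part must be broken down into parts $\leq L+2$ while conserving the sum and using Lemma~\ref{Simple} to guarantee feasibility.

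The main obstacle I anticipate is controlling the largest part: a partition counted on the right can have $s(\pi)$ large and $l(\pi) = L + s(\pi)$ arbitrarily large, whereas every partition on the left must have all parts $\leq L+2$. So the injection cannot simply insert $2$'s and leave the rest alone — it must dismantle oversized parts, and doing this injectively requires a careful encoding of ``how much was removed and from where'' into the frequency of $2$ (or some residue of it), exactly in the spirit of Tables~\ref{tab:tab1} and~\ref{tab:tab2}. A secondary subtlety is that $G_{L,2}$ is not literally of the form handled by Theorem~\ref{MostGen}: the condition is $l(\pi) - s(\pi) \leq L$ with $s(\pi)$ free (on the right, $s(\pi) \geq 3$), not a fixed smallest part, so I would likely stratify by the value of $s(\pi) =: t \geq 3$, apply a translated version of the earlier injective constructions within each stratum (mapping partitions with smallest part $t$, largest part $\leq L+t$, to partitions with smallest part $2$, largest part $\leq L+2$), and then verify these strata-wise injections have disjoint images — again tracked via the frequency of $2$ — so that they assemble into a single injection. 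A final bit of care is needed at $N=3$, and possibly a few other tiny $N$ where a stratum is empty or nearly so, which is why the statement carries the corrective $+q^3$; these are dispatched by direct inspection.
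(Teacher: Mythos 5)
There is a genuine gap: you never actually construct the injection, and the route you commit to is the one the paper deliberately avoids because it is the harder one. You correctly note that the coefficient of $q^N$ in $G_{L,2}(q)$ equals $\sum_{j\geq 0} c_{N-jL}$ where $c_m$ is the coefficient of $q^m$ in $H_{L,2,L}(q)$, but you then set that aside and declare the ``cleanest route'' to be a direct injection from $\{\pi : s(\pi)\geq 3,\ l(\pi)-s(\pi)\leq L\}$ into $\{\pi : s(\pi)=2,\ l(\pi)\leq L+2\}$. That domain is stratified by $t=s(\pi)\geq 3$ with $t$ unbounded as $N$ grows, each stratum allowing parts up to $L+t$; the injections of Section \ref{Proofs} that you propose to ``translate'' (i) land in sets with smallest part $t$, not $2$, (ii) are only valid for $N\geq \kappa(t)$ or $\gamma(L,t)$, bounds that blow up in $t$ and so fail for strata with $t$ comparable to $N$, and (iii) use fixed small frequencies of the smallest part as their bookkeeping statistic, which would collide across the infinitely many strata you need to keep disjoint. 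You name these obstacles yourself but offer no construction that overcomes them, and for a proof of this kind the explicit map \emph{is} the proof.

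The paper instead stays with the $H_{L,2,L}$ coefficients: it builds one injection $\phi:\{\pi\in D_{L,2}:|\pi|=N\}\to\{\pi\in C^*_{L,2}:|\pi|=N\}$ for $N>3$ (all parts bounded by $L+2$ on both sides, so no dismantling of large parts is ever needed), giving $a_n\geq 0$ for $n\neq 3$ and $a_3=-1$. Crucially, nonnegativity of the $a_n$ alone does not finish the argument along the progression $N\equiv 3\pmod L$: one must also exhibit a partition in $C^*_{L,2}$ outside the image of $\phi$ to get the \emph{strict} inequality $a_n\geq 1$ for $n\geq 14$, so that $a_{L+3}\geq 1$ cancels $a_3=-1$ in $b_n=\sum_i a_{Li+3}$ (here $L\geq 11$ is exactly what makes $L+3\geq 14$). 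Your proposal contains neither the construction of $\phi$ nor any mechanism for producing this surplus term, so as it stands it does not prove the theorem.
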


\begin{proof}
For $N > 3$, we construct an injective map $$\phi : \{\pi \in D_{L,2} : |\pi| =
N\} \rightarrow \{\pi \in C^*_{L,2} : |\pi| = N \}. $$  For a partition $\pi$ 
in the domain, we let $f$ be the frequency of $L$, so $\pi$ has the form $(3^{f_3}, \ldots, L^{f}, \ldots, (L+2)^{f_{L+2}})$. 
Our definition of the image of $\pi$ under $\phi$ is given in
two cases, when $f > 0$ and $f = 0$, with
the latter case containing several subcases.   We describe all the cases first,
and show that $\phi$ is injective later.  For the reader wishing to look ahead,
Table \ref{tab:tab3} contains a summary of the cases.

Case $1$: Suppose $f > 0$. Since $L \geq 11$, for any $i  \geq 1$, we have $(L-8)i \geq
3$, and thus by Lemma \ref{Simple} (applied with $s=2$ there), there are nonnegative
integers $x_i$, $y_i$ and $z_i$ such that $$ Li = 8i+3x_i+4y_i+5z_i. $$ For each $i \geq
1$, fix some values of $x_i$, $y_i$ and $z_i$ and keep them fixed throughout the proof.
Define $$ \phi(\pi) = \left(2^{4f}, 3^{x_f+f_3}, 4^{y_f+f_4}, 5^{z_f+f_5},
6^{f_6}, \ldots, L^0, \ldots (L+2)^{f_{L+2}}\right).$$

Case 2: When $f=0$, to obtain $\phi(\pi)$ from $\pi$, there are no parts of $L$ to remove.
To obtain $\phi(\pi)$, we must insert parts of size 2 into $\pi$ and compensate in
some way.   For this, we must consider several subcases.  We denote the smallest part of $\pi$ by $s(\pi)$.

Case 2(A): When $s(\pi) \geq 5$, we define $$ \phi(\pi) = \left(2^1, (s(\pi)-2)^1,
(s(\pi))^{f_{s(\pi)}-1}, \ldots \right).$$

Case $2$(B): Suppose $s(\pi) \leq 4$, so $s(\pi)$ is either $3$ or $4$.

Case $2$(B)(i): If $f_4 \geq 1$, we define $$ \phi(\pi) = \left(2^2, 3^{f_3}, 4^{f_4-1},
\ldots \right).$$

Case $2$(B)(ii): Suppose $f_4 = 0$, so $s(\pi) = 3$. We have further subcases. 

Case $2$(B)(ii)(a): If $f_3 \geq 2$, we define $$ \phi(\pi) = \left(2^3, 3^{f_3-2},
5^{f_5}, \ldots \right).$$

Case $2$(B)(ii)(b): Suppose $f_3 = 1$. Then $\pi = \left(3, 5^{f_5}, \ldots \right)$.
Since $N > 3$, there exists an $m \geq 5$ such that $f_m \geq 1$. Let $m_0$ be the least
such number. We have further subcases depending on whether $m_0$ is odd or even.

Case $2$(B)(ii)(b)$(\alpha)$: If $m_0$ is odd, we define 
\begin{align*}
    \phi(\pi) &= \left(2^1, 3^0,
\ldots, \left(\frac{m_0+1}{2}\right)^2, \ldots, m_0^{f_{m_0}-1}, \ldots
\right) & \textnormal{ if } m_0 > 5,\\
\textnormal{ and }\; \phi(\pi) &= \left(2^1, 3^2, 4^0, 
5^{f_{5}-1}, \ldots \right) & \textnormal{ if } m_0 = 5.
\end{align*}
In both cases, the part 3 and a part $m_0$ were removed from $\pi$,
and a part 2 and two parts $\tfrac{m_0 + 1}{2}$ were inserted into
$\pi$ to obtain $\phi(\pi)$.  This ensures $|\phi(\pi)| = |\pi|$ regardless of
the value of $m_0$.

Case $2$(B)(ii)(b)$(\beta)$: If $m_0$ is even, we define 
\begin{align*}
    \phi(\pi) &= \left(2^1, 3^0, \ldots, \left(\frac{m_0}{2}\right)^1,
    \left(\frac{m_0}{2}+1\right)^1, \ldots, m_0^{f_{m_0}-1}, \ldots
    \right)& \textnormal{ if } m_0 > 6,\\
	\textnormal{ and }\; \phi(\pi) &= \left(2^1, 3, 4, 5^0, 6^{f_6-1}, \ldots, \right)& \textnormal{
	if } m_0 = 6.
\end{align*}
In both cases, the part 3 and a part $m_0$ were removed from $\pi$,
and a part 2, a part $\tfrac{m_0}{2}$ and a part
$\tfrac{m_0}{2}+1$ were inserted into
$\pi$ to obtain $\phi(\pi)$.  This ensures $|\phi(\pi)| = |\pi|$ regardless of
the value of $m_0$.

\begin{table}[htpb]
    \begin{threeparttable}
    \centering
    \begin{tabular}{|c|c|c|c|c|c|c|c|}
	 \hline
	 Case      & \multicolumn{5}{c|}{Description of case} &  $f_2$ in $\phi(\pi)$ &
	 Next parts\\
	\hline
	 1        & {\cred $f>0$} & & & & &   mult. of 4  & $*$\\
	\hline
	 2(A)     & $f=0$ & $s(\pi) \geq 5$ &  & & &  1 & $s(\pi) -2,
     s(\pi)$\tnote{$\dag$}\\
	\hline
	 2(B)(i)     & $f=0$ & $s(\pi) \leq 4$ & $f_4 \geq 1$ & & &  2 & $*$\\
	\hline
	 2(B)(ii)(a) & $f=0$ & $s(\pi) \leq 4$ & $f_4 = 0$ & $f_3 \geq 2$ & &  3 & $*$\\
	\hline
	 2(B)(ii)(b)($\alpha$) & $f=0$ & $s(\pi) \leq 4$ & $f_4 = 0$ & $f_3 = 1$ & $m_0$
	 odd & 1 & $\left(\tfrac{m_0 + 1}{2}\right)^2$\\
	\hline
	 2(B)(ii)(b)($\beta$) & $f=0$ & $s(\pi) \leq 4$ & $f_4 = 0$ & $f_3 = 1$ &  $m_0$
	 even & 1 & $\tfrac{m_0}{2}, \tfrac{m_0}{2}+1$\\
	\hline
 \end{tabular}
 \begin{tablenotes}
     \scriptsize
    \item[$\dag$]  $s(\pi)$ may appear with frequency 0. 
\end{tablenotes}
    \end{threeparttable}
	\caption{The frequency of $2$ in the image of a partition under the function $\phi$ in the
different cases for Theorem \ref{thm:glbig}.  The quantity $m_0$ is defined in Case
2(B)(ii)(b).  The column {\cmagenta ``Next parts"} indicate the second and third smallest
parts, which are equal in the Case 2(B)(ii)(b)($\alpha$).}
        \label{tab:tab3}
\end{table}

The map $\phi$ is easily seen to be injective in each case separately.  To see
it  is injective overall, observe that images under $\phi$ in most of the cases are
separated by the frequency of $2$;  the only cases in which images have the same
frequency of $2$ are Cases 2(A), 2(B)(ii)(b)($\alpha$) and 2(B)(ii)(b)($\beta$).  The Case
2(B)(ii)(b)($\alpha$) is separated from the other two cases by the frequency,  2, of the
second smallest part.  Finally, the Cases 2(A) and
2(B)(ii)(b)($\beta$) are separated by the difference between the second smallest
and the third smallest parts; this is $1$ for Case 2(B)(ii)(b)($\beta$) and at
least $2$ for Case 2(A).   These differences are listed in Table
\ref{tab:tab3}.

Thus we have shown that for $N > 3$,  
\begin{equation*}
	|\{\pi \in C^*_{L,2} : |\pi| = N\}| - |\{\pi \in D_{L,2} : |\pi| = N\}| \geq 0.
\end{equation*}
However, if $N=3$, this is not true;  it is easy to
see that  $|\{\pi \in D_{L,2} : |\pi| = 3\}| = 1$ and $|\{\pi \in C^*_{L,2} : |\pi| =
	3\}| = 0$.  Furthermore, if $N=1$ or $2$, we easily find that $|\{\pi \in
C^*_{L,2} : |\pi| = N\}| \geq |\{\pi \in D_{L,2} : |\pi| = N\}|$.  

Let $H_{L,2,L}(q) = \sum_{n \geq 0} a_n q^n$.   Then the above combinatorial results imply
that $a_n \geq 0$ for all $n \neq 3$, and $a_n = -1$ for $n = 3$, whence $H_{L,2,L}(q) +
q^3 \succeq 0$.  We can in fact make a stronger claim about the coefficients $a_n$
when $n \geq 14$;   we have $a_n \geq 1$ for all $n \geq 14$.  To see this, we
find a partition of $n$ in $C^*_{L,2}$ not in the image of
$\phi$. Recall $L \geq 11$.  For $n=14$, the partition $\pi_{14} = (2^1, 3^4)$ is in $C^*_{L,2}$ but not in
the image of $\phi$.  For $n \geq 15$,  we have $n - 11
\geq 4$, and thus, by Lemma \ref{Simple} (with $s=3$ there), there are nonnegative integers $x_n$, $y_n$, $z_n$ and $u_n$ such that
$$ n = 11 + 4x_n + 5y_n + 6z_n + 7u_n. $$ For each $n \geq 15$, fix some choice of $x_n$,
$y_n$, $z_n$ and $u_n$. Thus $\pi_n = (2^1, 3^3, 4^{x_n}, 5^{y_n}, 6^{z_n}, 7^{u_n})$ is a
partition of $n$ in $C^*_{L,2}$ not in the image of $\phi$.
 
Let $G_{L,2}(q) = \sum_{n \geq 0} b_n q^n$.  To prove the  theorem, we are required to
show $b_n \geq 0$ whenever $n \neq 3$, and $b_3 \geq -1$.  By Theorem \ref{OtherGL2}, for
any $n \geq 0$, 
\begin{equation*}
	b_n = a_n + a_{n-L} + a_{n-2L}+ \cdots.
\end{equation*}
Using the division algorithm, we have $n =Lq+r$ where $0 \leq r < L$, and thus we can rewrite $b_n$ as 
\begin{equation}
	\label{b_n}
  	b_n = \sum_{i=0}^{q} a_{Li+r}. 
\end{equation}
If $r \neq 3$, then none of the terms on the right hand side of \eqref{b_n} are negative and thus $b_n
\geq 0$ as required. If $r = 3$ and $q \geq 1$, then from \eqref{b_n}, we have $b_n \geq
a_{L+3} + a_3$. Since $L \geq 11$, we have $L+3 \geq 14$, and thus $a_{L+3} \geq 1$, which
implies $b_n \geq 0$. Finally, if $r=3$ and $q=0$, then $b_n = b_3 = a_3 = -1$. 
\end{proof}
 
Next, we prove Conjecture \ref{s=2} for $ 5 \leq L \leq 10$.  
 \begin{theorem}
 \label{SmallL}
 For $5 \leq L \leq 10$, $$ G_{L,2}(q)+q^3  \succeq 0.$$
 \end{theorem}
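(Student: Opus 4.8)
The plan is to mimic the structure of the proof of Theorem \ref{thm:glbig}, but now for each fixed value of $L$ in the range $5 \leq L \leq 10$ we must handle the fact that the ``large $L$'' assumption $L \geq 11$ was used in exactly one place: Case~1, where $f>0$ and we needed $(L-8)i \geq 3$ for all $i \geq 1$ in order to apply Lemma \ref{Simple} with $s=2$ to solve $Li = 8i + 3x_i + 4y_i + 5z_i$. For $L \geq 11$ this works because $L - 8 \geq 3$; for $5 \leq L \leq 10$ it fails or is too weak. So first I would isolate this as the only obstruction: Cases~2(A), 2(B)(i), 2(B)(ii)(a), 2(B)(ii)(b)($\alpha$), 2(B)(ii)(b)($\beta$) all concern the case $f = f_L = 0$ and make no use of $L \geq 11$ beyond needing $L \geq 5$ (so that parts $2,3,4$ are legitimate and distinct from $L$, and so that $s(\pi)-2 \geq 3$ when $s(\pi)\geq 5$, etc.), hence they carry over verbatim, together with the same injectivity-by-frequency-of-$2$ bookkeeping summarized in Table~\ref{tab:tab3}.

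The main work, then, is to re-do Case~1 ($f = f_L > 0$) for each $L \in \{5,6,7,8,9,10\}$. The idea is the same: remove all $f$ copies of $L$ (total weight $Lf$), insert a controlled number of $2$'s, and redistribute the remaining weight among the small parts $3,4,5,\dots$ so that the frequency of $2$ in the image is a multiple of $4$ (to keep it disjoint from the frequencies $1,2,3$ used in the $f=0$ subcases) and determines $f$, guaranteeing injectivity. For $L = 10$ we have $Lf = 10f$; writing $10f = 8f + 2f$ and noting $2f \geq 2 > 3$ fails for $f=1$, so a cleaner choice is $\phi(\pi)$ with $f_2 = 4f$ and redistributing $Lf - 8f = (L-8)f$ via Lemma \ref{Simple} whenever $(L-8)f \geq 3$; the finitely many residual small cases (e.g. $L=10$, $f=1$, giving leftover $2$; or $L=9$, $f=1$, leftover $1$; or $L \in \{5,6,7,8\}$ where $(L-8)f$ can be negative) must be handled by hand. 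Concretely, for those exceptional $(L,f)$ pairs one can instead insert fewer $2$'s, say $f_2 = 2f$ or a suitable multiple, and absorb the difference $Lf - 2f = (L-2)f$ — which is always $\geq 3$ since $L \geq 5$ — into parts $3,4,5$ via Lemma \ref{Simple}; the choice $f_2 \equiv 2f$ or some other fixed-mod pattern must still be kept disjoint (mod $4$ or otherwise) from $\{1,2,3\}$ and from the other branch. The bookkeeping here is the crux: one needs a system of congruence classes for $f_2$ so that all cases remain mutually distinguishable, and this may require splitting Case~1 itself into subcases according to $L$ (and perhaps the parity of $f$).

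I expect the main obstacle to be precisely this combinatorial bookkeeping for small $L$: with $L$ as small as $5$, the part $L$ itself lies among $\{5,6,\dots\}$ and collides with parts we would naturally want to insert, and the weight $Lf$ is small relative to the minimum part $3$, so the ``insert and redistribute'' move has little room. One likely clean resolution, rather than a uniform formula, is to treat $L=5,\dots,10$ with six short ad hoc arguments, in each case exhibiting an injection $\phi : \{\pi \in D_{L,2} : |\pi|=N\} \to \{\pi \in C^*_{L,2} : |\pi|=N\}$ for $N > 3$ built from the same primitives (Lemma \ref{Frobenius} and Lemma \ref{Simple}), and then — exactly as at the end of Theorem \ref{thm:glbig} — deducing $G_{L,2}(q) + q^3 \succeq 0$ from Theorem \ref{OtherGL2} via the identity $b_n = \sum_{i=0}^{q} a_{Li+r}$ together with the fact that $a_n \geq 1$ for all sufficiently large $n$ (established again by producing an explicit partition of $n$ in $C^*_{L,2}$ outside the image of $\phi$). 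One subtlety in this last step: for small $L$ the threshold beyond which $a_n \geq 1$, and the verification that $a_{Li+3} \geq 1$ for the relevant $i$, must be checked against small $n$ directly, which is a finite computation but one that must actually be carried out for each $L$.
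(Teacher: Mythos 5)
There is a genuine gap. Your plan rests on the claim that the $f=0$ subcases of the proof of Theorem \ref{thm:glbig} ``carry over verbatim'' for $5 \leq L \leq 10$, so that only Case 1 needs repair and the resulting map is an injection for all $N>3$. Neither half of this is true. First, the $f=0$ subcases insert parts such as $s(\pi)-2$ (Case 2(A)), $\tfrac{m_0+1}{2}$ (Case 2(B)(ii)(b)($\alpha$)), and $\tfrac{m_0}{2}$, $\tfrac{m_0}{2}+1$ (Case 2(B)(ii)(b)($\beta$)); for small $L$ these can equal the impermissible part $L$, so the image lands outside $C^*_{L,2}$. For instance, with $L=5$ and $\pi=(7)$ we are in Case 2(A) and the recipe produces $(2,5)$, which contains the forbidden part $5=L$. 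Second, and more fundamentally, no injection from $\{\pi\in D_{L,2}:|\pi|=N\}$ into $\{\pi\in C^*_{L,2}:|\pi|=N\}$ can exist for all $N>3$ in this range of $L$: one checks directly that $a_{5,7}=-1$ (there are two partitions $(7)$ and $(4,3)$ in $D_{5,2}$ of $7$ but only one, $(2,2,3)$, in $C^*_{5,2}$) and likewise $a_{7,9}=-1$. So the target of your construction is unattainable at those values of $N$, not merely awkward to reach.

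The paper's proof therefore takes a different route: it builds a new injection (with cases governed by the parity of $f_L$ and, when $f_L=0$, by the least part of frequency at least $2$) that is only claimed for $N \geq N_L = \tfrac{L(L+3)}{2}+2$, verifies the coefficients $a_{L,N}$ for $N \leq N_L$ by a finite computation, and then, in the step $b_n=\sum_i a_{L,Li+r}$, absorbs the extra negative coefficients at $(L,N)=(5,7)$ and $(7,9)$ using the positive values $a_{5,2}=a_{7,2}=1$ together with $a_{L,N}\geq 1$ for $N\geq N_L$. Your proposal does correctly anticipate the final bookkeeping via Theorem \ref{OtherGL2} and the need for a finite check at small $n$, but without recognizing that the injection itself must be restricted to large $N$ and redesigned (not merely patched in Case 1), the argument as outlined cannot be completed.
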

 
\begin{proof}
Let $N_L = \frac{L(L+3)}{2}+2$;  we give the values of $N_L$ in Table \ref{tab:tab4}.
For $5 \leq L \leq 10$ and $N \geq N_L$, we construct an injective map $$ \psi : \{\pi \in
D_{L,2} : |\pi| = N\} \rightarrow \{\pi \in C^*_{L,2} : |\pi| = N \}.$$

\begin{table}[htpb]
 \centering
\begin{tabular}{|c|c|c|c|c|c|c|}
\hline
$L$    &   $5$  &  $6$  &  $7$  &  $8$ &  $9$  & $10$ \\
\hline
$N_L$  &  $22$   &   $29$ &   $37$ &   $46$ &   $56$ &   $67$  \\
\hline
\end{tabular}
\caption{The table of values of $N_L$ versus $L$ for $5 \leq L \leq 10$.}
\label{tab:tab4}
\end{table}

We again let $f$ be the frequency of $L$ in $\pi \in D_{L,2}$; so, $\pi = (3^{f_3}, 4^{f_4}, \ldots, L^{f}
\ldots, (L+2)^{f_{L+2}})$. To define the image of $\pi$ under the map
$\psi$, we consider several cases depending on $f$.  We describe $\psi$ first
and explain why it is injective later.

Case $1$: Suppose that $f$ is a positive even number.  Then define $$ \psi(\pi) =
\left(2^{\frac{L f}{2}}, 3^{f_3}, \ldots, L^0, \ldots, (L+2)^{f_{L+2}}\right).$$

Case $2$: Suppose that $f$ is a positive odd number. Then define $$ \psi(\pi) =
\left(2^{L\left(\frac{f-1}{2}\right)+1}, 3^{f_3}, \ldots, (L-2)^{f_{L-2}+1}, \ldots, L^0,
\ldots, (L+2)^{f_{L+2}}\right).$$

Case $3$: Suppose $f = 0$. Since $N \geq N_L$ is large enough, there exists an $i$ such
that $3 \leq i \leq L+2$ and $f_i \geq 2$.   Let $i_0$ be the least such number. Note that
$i_0 \neq L$ since $f = 0$. We have further subcases depending on whether $i_0 = L+1$
or not. 

Case $3$(i): Suppose $i_0 \neq L+1$. Then define $$ \psi(\pi) = \left(2^{i_0},
3^{f_3}, \ldots, i_0^{f_{i_0}-2}, \ldots L^0, \ldots, (L+2)^{f_{L+2}}\right).$$

Case $3$(ii): Suppose $i_0 = L+1$. Then define $$  \psi(\pi) = \left(2^2, 3^{f_3},
\ldots, (L-1)^{f_{L-1}+2}, L^0, (L+1)^{f_{L+1}-2}, \ldots, (L+2)^{f_{L+2}}\right).$$

It is easy to see that $\psi$ is injective in each case.  To see that $\psi$ is injective overall, 
note that the frequency of $2$ modulo $L$ in the image distinguishes the cases with one
exception:  when $i_0 = L+1$ and $i_0 = L+2$.  The frequencies of 2 in the image in those
cases are $2$ and $L+2$, respectively.  However, while those frequencies are the same
modulo $L$, they are different numbers and so distinguish the cases. Hence the map $\psi$ is injective. 

Let $H_{L,2,L}(q) = \sum_{n \geq 0} a_{L,n}q^n$ and $G_{L,2}(q) = \sum_{n \geq 0}
b_{L,n}q^n$. Then, from Theorem \ref{OtherGL2}, for all $n \geq 0$, 
\begin{equation}\label{eq:relbnan}
	b_{L,n} = a_{L,n} + a_{L,n-L} + a_{L,n-2L}+\cdots.
\end{equation}
From the injectivity of $\psi$, we have $a_{L,N} \geq 0$ for all $5 \leq L \leq 10$ and $N
\geq N_L$. In fact, we can show $a_{L,N} \geq 1$ for all $5 \leq L \leq 10$ and $N \geq N_L$. To see
this, we find a partition in $C^*_{L,2}$ which is not in the image of
$\psi$.  For $L \geq 5$, note that $N_L \geq 2L+12$.  But from $N \geq N_L$, we conclude
$N - 2(L+3) \geq 6$.  Hence, by Lemma \ref{Frobenius}, there are nonnegative integers $x_L$ and $y_L$ such that $$ N = 2(L+3) +
3x_L + 4y_L. $$ For each $5 \leq L \leq 10$, fix some values of $x_L$ and $y_L$. Thus,
there is a partition $\lambda_L$ of $N$ given by $$ \lambda_L = (2^{L+3}, 3^{x_L},
4^{y_L}).$$ Note that $\lambda_L$ is not in the image of $\psi$ since the frequency of $2$
is $L+3$, which is not possible for any partition in the image of $\psi$.

We are therefore left with determining the nonnegativity of $a_{L,N}$ when $N \leq N_L$.  Since
$5 \leq L \leq 10$, the numbers
$a_{L,N}$ for $N \leq N_L$ can easily be calculated by, for example, a short Magma program.  The negative values of $a_{L,N}$ for $5 \leq L \leq 10$ and $N \leq
N_L$ are given in Table \ref{tab:tab5}.  In that table, we have also given the value of
$a_{L,N+L}$ when $a_{L,N}$ is negative.

\begin{table}
	\centering
\begin{tabular}{|c|c|c|c|}
\hline
$L$     &   $N$ & $a_{L,N}$ & $a_{L,N+L}$  \\
\hline
$5$      &  $3$ & $-1$ & $2$   \\
\hline
$5$     &   $7$ & $-1$ & $2$    \\
\hline
$6$    &   $3$ & $-1$ & $1$  \\
\hline
$7$    &   $3$ & $-1$ & $3$   \\
\hline
$7$    &   $9$ & $-1$ & $10$  \\
\hline
$8$  &   $3$  & $-1$  & $3$ \\
\hline
$9$  &   $3$  & $-1$ & $4$    \\
\hline
$10$  &   $3$  & $-1$ & $5$   \\
\hline
\end{tabular}
\caption{The values $5 \leq L \leq 10$ and $0 \leq N \leq N_L$ where $a_{L, N}$ is negative.
Also given is $a_{L,N+L}$ in those cases.}
\label{tab:tab5}
\end{table}

Using \eqref{eq:relbnan} and Table \ref{tab:tab5}, along with the facts that $a_{5,2} =1$ and
$a_{7,2} =1$, we conclude that $b_{L,N} \geq 0$ for $5 \leq L \leq 10$ if $N \neq 3$, and $b_{L,3} = -1$.
Hence, for all $5 \leq L \leq 10$, we see that $G_{L,2}(q)+q^3  \succeq 0$.
\end{proof}

Finally, we prove Conjecture \ref{s=2} for the cases $L=3$ and $L=4$ in the next
two theorems.

\begin{theorem}\label{thm:lequal3}
For $L=3$, $$G_{L,2}(q)+q^3 + q^9 + q^{15} \succeq 0.$$
\end{theorem}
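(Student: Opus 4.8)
The plan is to follow the template of Theorems~\ref{thm:glbig} and \ref{SmallL}. By Theorem~\ref{OtherGL2}, $G_{3,2}(q)=H_{3,2,3}(q)/(1-q^3)$, so writing $H_{3,2,3}(q)=\sum_{n\ge 0}a_nq^n$ and $G_{3,2}(q)=\sum_{n\ge 0}b_nq^n$ we have $b_n=\sum_{i=0}^{q}a_{3i+r}$ whenever $n=3q+r$ with $0\le r<3$. By \eqref{eq:coeffHdef} with $s=2$ and $k=L=3$ (note $I_{3,2,3}=C^*_{3,2}$), $a_n=|\{\pi\in C^*_{3,2}:|\pi|=n\}|-|\{\pi\in D_{3,2}:|\pi|=n\}|$, where $C^*_{3,2}$ consists of partitions into parts from $\{2,4,5\}$ with at least one part equal to $2$, and $D_{3,2}$ of nonempty partitions into parts from $\{3,4,5\}$. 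Thus it suffices to pin down the sign pattern of $(a_n)$ and then control, for each residue $r\in\{0,1,2\}$, the partial sums $\sum_{i=0}^{q}a_{3i+r}$.

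The heart of the argument is to show that $a_n\ge 0$ for all $n$ at least some explicit moderate bound $N_3$, by constructing for such $n$ an injection $\phi:\{\pi\in D_{3,2}:|\pi|=n\}\to\{\pi\in C^*_{3,2}:|\pi|=n\}$. As in the earlier proofs, the cases are indexed by $f:=f_3$, the frequency of the part $L=3$ in $\pi$. If $f$ is even and positive, delete the $f$ parts equal to $3$ and insert $3f/2$ parts equal to $2$; if $f$ is odd with $f\ge 3$, delete the $f$ threes and insert $(3f-5)/2$ twos together with one $5$ (legitimate since $3f-5\ge 4$). When $f\le 1$ the deleted weight $3f$ cannot be rebuilt locally from $\{2,4,5\}$, so one borrows from the other parts of $\pi$: if $f=1$, combine the single $3$ with a $4$ (via $7=2+5$), or, if $\pi$ has no $4$, with a $5$ (via $8=2+2+2+2$); if $f=0$, the smallest part of $\pi$ is $4$ or $5$, so split off a $4$ as $2+2$, or, if only parts equal to $5$ occur, replace a pair of $5$'s suitably. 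Injectivity is arranged, as in Tables~\ref{tab:tab1}--\ref{tab:tab3}, by making the frequency of $2$ in $\phi(\pi)$ (supplemented, where two cases would otherwise collide, by the second-smallest part of $\phi(\pi)$) identify the case, the arithmetic decompositions being supplied by Lemmas~\ref{Frobenius} and \ref{Simple}. I expect this bookkeeping to be the main obstacle: because $L=3$ is so small, the injection genuinely fails for some $n<N_3$ (for instance $a_{15}=-1$), so $N_3$ has to be chosen correctly, and the image-frequencies of $2$ coming from the $f\le 1$ cases must be kept disjoint from those produced by $f\ge 2$, which occupy the residue classes $0$ and $2$ modulo $3$.

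With $a_n\ge 0$ for $n\ge N_3$ in hand, I would compute $a_n$ for the finitely many $n<N_3$ directly (by hand or a short program); the only negative values are $a_3=a_5=a_9=a_{13}=a_{15}=-1$, all others being nonnegative, and we will also use the value $a_{18}=2$. (Theorem~\ref{General} with $s=2$, $L=3$, $k=3$ gives $a_n>0$ for all sufficiently large $n$, but its bound is far too large to help, and it is not needed here.) Finally, assemble the conclusion from $b_n=\sum_{i=0}^{q}a_{3i+r}$. For $r=1$, the terms $a_1,a_4,a_7,a_{10},a_{13},a_{16},\dots$ are $0,0,0,1,-1,2,\dots$ with every later term nonnegative, so all partial sums are $\ge 0$ and $b_n\ge 0$ for $n\equiv 1\pmod 3$. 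For $r=2$, the terms $a_2,a_5,a_8,a_{11},a_{14},\dots$ are $1,-1,0,0,2,\dots$, again with later terms nonnegative, so $b_n\ge 0$ for $n\equiv 2\pmod 3$. For $r=0$, the terms $a_0,a_3,a_6,a_9,a_{12},a_{15},a_{18},\dots$ are $0,-1,1,-1,1,-1,2,\dots$, with all terms from $a_{18}$ onward nonnegative; the partial sums thus equal $0$ at $n=0,6,12$, equal $-1$ exactly at $n=3,9,15$, and are positive for $n\ge 18$. Hence $b_n\ge 0$ for $n\notin\{3,9,15\}$ and $b_n\ge -1$ for $n\in\{3,9,15\}$, that is, $G_{3,2}(q)+q^3+q^9+q^{15}\succeq 0$.
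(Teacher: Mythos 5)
Your overall architecture is exactly the paper's: an injection from $D_{3,2}$ into $C^*_{3,2}$ indexed by $f_3$ for all $n$ past an explicit threshold, a finite computation below it (the negative coefficients are indeed only $a_3=a_5=a_9=a_{13}=a_{15}=-1$), and the partial-sum analysis of $b_n=\sum_{i\le q}a_{3i+r}$ along each residue class, which you carry out correctly and which matches Table \ref{tab:tab6}. Your $f_3\ge 1$ branches coincide with the paper's (even $f_3$: $3f_3/2$ twos; odd $f_3\ge 3$: $(3f_3-5)/2$ twos and a $5$; $f_3=1$: $3+4=2+5$ or $3+5=2^4$).

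The one place your construction as written fails is the $f_3=0$ branch, and it fails for precisely the reason you flag as the danger. Splitting a single $4$ into $2+2$ sends $(4^{a+1},5^{b})$ to $(2^2,4^{a},5^{b})$, which is \emph{the same partition} as the image $(2^2,4^{a},5^{b})$ of $(3^3,4^{a},5^{b-1})$ under your odd-$f_3$ rule (both preimages have size $4a+5b+4$); likewise replacing two $5$'s by $2^5$ collides with the image of $f_3=5$. Since the two images are identical partitions, no tiebreaker read off from the image (second-smallest part or otherwise) can separate them; the map must be changed, not just relabelled. Your own disjointness requirement --- that the $f_3\le 1$ cases avoid the residues $0,2\pmod 3$ occupied by the $f_3\ge 2$ cases --- rules out the frequencies $2$ and $5$ you produce. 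The paper's fix is to remove \emph{eight} $4$'s (giving $2^{16}$) or, failing that, \emph{four} $5$'s (giving $2^{10}$), so that all images from $f_3\le 1$ have $f_2\in\{1,4,10,16\}$, pairwise distinct and all $\equiv 1\pmod 3$; the price is that one needs $f_4\ge 8$ or $f_5\ge 4$ whenever $f_3=0$, which (together with the $f_3=1$ case needing some part besides the $3$) is what forces the threshold $N>43$ before the injection exists. With that repair your argument goes through and is the paper's proof.
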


\begin{proof}
For $N > 43 $, we construct an injective map $$ \phi : \{\pi \in D_{3,2} : |\pi| =
N\} \rightarrow \{\pi \in C^*_{3,2} : |\pi| = N \}.$$
Recall that each $\pi \in D_{3,2}$ has the form $\pi =
(3^{f_3},4^{f_4},5^{f_5})$, and each partition in $C^*_{3,2}$ has 2 as a part,
but cannot have 3 as a part.  	We have cases depending on the frequency of $3$ in $\pi$.
We fully describe $\phi$ and later show it is injective.

Case $1$: Suppose that $f_3$ is a positive even number.  Then define $$ \phi(\pi) =
\left(2^{\frac{3f_3}{2}}, 3^0, 4^{f_4}, 5^{f_5}\right).$$

Case $2$: Suppose that $f_3 \geq 3$ is an odd number. Then define $$ \phi(\pi) =
\left(2^{3\left(\frac{f_3-3}{2}\right)+2}, 3^0,  4^{f_4}, 5^{f_5+1}\right).$$

Case $3$: Suppose $f_3 = 1$.  Then $\pi = (3^1, 4^{f_4}, 5^{f_5})$. Since $N > 43
$, either $f_4 \geq 1$ or $f_5 \geq 1$. We have further subcases based on these conditions.

Case $3$(i): Suppose $f_4 \geq 1$. Then define $$ \phi(\pi) = \left(2^1, 3^0, 4^{f_4-1}, 5^{f_5+1}\right).$$

Case $3$(ii): Suppose $f_4 = 0$ and $f_5 \geq 1$. Then define $$ \phi(\pi) =
\left(2^4, 3^0,  5^{f_4-1}\right).$$

Case $4$: Suppose $f_3 = 0$. Then $\pi = (4^{f_4}, 5^{f_5})$. Since $N > 43 $,
either $f_4 \geq 8$ or $f_5 \geq 4$. We have further subcases based on these conditions.

Case $4$(i): If $f_4 \geq 8$, then define $$ \phi(\pi) = \left(2^{16}, 3^0,  4^{f_4-8}, 5^{f_5}\right).$$

Case $4$(ii): If $f_4 \leq 7$ and $f_5 \geq 4$, then define $$ \phi(\pi) =
\left(2^{10}, 3^0, 4^{f_4}, 5^{f_5-4}\right).$$

In each case it is clear that we can recover $\pi$ from $\phi(\pi)$.  To see $\phi$ is
injective overall, the frequency of $2$ in partitions in the image distinguishes the different cases;  in Cases
1 and 2, the frequency of 2 is congruent to 0 or 2 modulo 3, while in the other cases the
frequency of 2 is either 1, 4, 16 or 10.
Thus, the map $\phi$ is injective overall whenever $N > 43$. 

Let $H_{3,2,3}(q) = \sum_{n \geq 0} a_n q^n$ and $G_{3,2}(q) = \sum_{n \geq 0}
b_n q^n$.  From Theorem \ref{OtherGL2}, $$b_n = a_n + a_{n-3} + a_{n-6} +
\cdots.$$ The injectivity of $\phi$ shows that $a_n \geq 0$ for all $n > 43$.
For $n \leq 43$, $a_n$ can be calculated easily using a computer.  It can be verified that $a_n$ is negative only when $n$ is
either $3$, $5$, $9$, $13$ or $15$ and in all of these cases, $a_n = -1$.  Table
\ref{tab:tab6} contains the values of $a_n$ for $n \leq 18$. 
\begin{table}
    \centering
    \begin{tabular}{|c|c|c|c|c|c|c|c|c|c|c|c|c|c|c|c|c|c|c|c|}
        \hline
        $n$  &  $0$  &  $1$ &  $2$  &  $3$ &  $4$ &  $5$ &  $6$  &  $7$ &  $8$
             &  $9$ &  $10$ &  $11$ &  $12$ &  $13$ &  $14$ &  $15$ &  $16$ &
        $17$ &  $18$ \\
        \hline
        $a_n$  &  $0$  &  $0$ &  $1$  &  $-1$ &  $0$ &  $-1$ &  $1$  &  $0$ &
        $0$  &  $-1$ &  $1$ &  $0$ &  $1$ &  $-1$ &  $2$ &  $-1$ &  $2$ &  $0$ &
        $2$ \\
        \hline
    \end{tabular}
    \caption{The values $a_n$ for $0 \leq n \leq 18$ in Theorem
    \ref{thm:lequal3}.}
    \label{tab:tab6}
\end{table}
From Table \ref{tab:tab6} and the fact that $a_n$ is negative only when $n$ is either
$3$, $5$, $9$, $13$ or $15$ (and in all of these cases, $a_n = -1$), we obtain
$b_n \geq 0$ whenever $n \neq 3$, $n \neq 9$ and $n \neq 15$. Also apparent is
that $b_3 = -1$, $b_9 =-1$ and $b_{15} =-1$.  This proves that $G_{3,2}(q)+q^3 + q^9 + q^{15} \succeq 0$.
\end{proof}

\begin{theorem}
    \label{thm:lequal4}
 For $L=4$, $$G_{L,2}(q)+q^3 + q^9 \succeq 0.$$
\end{theorem}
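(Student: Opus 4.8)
The plan is to mimic the template already used for Theorems \ref{thm:glbig}, \ref{SmallL} and \ref{thm:lequal3}. First I would construct, for every $N$ beyond an explicit bound, an injection
\begin{equation*}
	\phi : \{\pi \in D_{4,2} : |\pi| = N\} \longrightarrow \{\pi \in C^*_{4,2} : |\pi| = N\},
\end{equation*}
where $C^*_{4,2} = I_{4,2,4}$ consists of partitions with smallest part $2$, all parts $\leq 6$, and no part equal to $4$. Writing $\pi = (3^{f_3}, 4^{f}, 5^{f_5}, 6^{f_6})$ with $f = f_4$, I would case on $f$. When $f \geq 1$, note $4f$ is even, so one may simply delete the $f$ parts equal to $4$ and insert $2f$ parts equal to $2$, setting $\phi(\pi) = (2^{2f}, 3^{f_3}, 5^{f_5}, 6^{f_6})$; the frequency of $2$ in the image is $2f$, an even number $\geq 2$. (Because the impermissible part $4$ is $2\cdot 2$, there is no parity obstruction, so the even/odd split of $f$ used for odd $L$ in Theorem \ref{SmallL} is unnecessary; one caveat is that the quantity $L-2$ appearing in Case $2$ of Theorem \ref{SmallL} now coincides with the inserted part $2$, which is precisely why that construction is replaced by the simpler one here.) When $f = 0$ we have $\pi = (3^{f_3}, 5^{f_5}, 6^{f_6})$, and for $N$ large there is a least $i_0 \in \{3,5,6\}$ with $f_{i_0} \geq 2$; I would insert parts of $2$ and rebalance the weight so that the image has an \emph{odd} frequency of $2$, choosing a different odd value for each $i_0$ — for instance $2^3$ when $i_0 = 3$, $2^5$ when $i_0 = 5$, and $2^1$ when $i_0 = 6$ (increasing the frequency of $5$ by two in the last case). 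Injectivity then follows exactly as before: the frequency of $2$ in $\phi(\pi)$ identifies the case, and within each case $\pi$ is obviously recoverable.

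Next I would upgrade this to strict inequality, in fact to $a_N \geq 1$ for $N$ beyond the bound (writing $H_{4,2,4}(q) = \sum_{n\geq 0} a_n q^n$), by exhibiting a partition of $N$ in $C^*_{4,2}$ outside the image of $\phi$: since the image frequencies of $2$ are the even numbers $\geq 2$ together with $\{1,3,5\}$, a partition such as $(2^7, 3^{x_N}, 5^{y_N})$ with $3x_N + 5y_N = N - 14$ (which exists by Lemma \ref{Frobenius} once $N - 14 \geq 8$) is not in the image. This disposes of all sufficiently large $N$.

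The remaining, genuinely case-specific, work is the small-$N$ analysis, done by a short machine (or by-hand) computation of $a_n$ below the injection bound; one finds $a_n \geq 0$ except for $a_3 = a_6 = a_9 = -1$, and the relevant small positive values $a_2 = 1$, $a_7 = 1$, $a_{13} = 2$. Finally I would pass to $G_{4,2}(q) = \sum_{n\geq 0} b_n q^n$ via Theorem \ref{OtherGL2}, which gives $b_n = \sum_{i\geq 0} a_{n-4i}$, and examine the four residue classes modulo $4$. In the class of $6$ the defect is absorbed, since $b_6 = a_6 + a_2 = 0$ and $a_2 = 1$ likewise cancels the contribution of $a_6$ for all larger $n \equiv 2 \pmod 4$; in the classes of $0$ and $3$ (away from $n = 3$) and $1$ (away from $n = 9$) the partial sums are already nonnegative, the tails being swallowed by $a_7 = 1$, $a_{13} = 2$, etc.; and the two exceptional sums are $b_3 = a_3 = -1$ and $b_9 = a_9 + a_5 + a_1 = -1$. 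Hence $b_n \geq 0$ for all $n \notin \{3,9\}$ and $b_3 = b_9 = -1$, i.e.\ $G_{4,2}(q) + q^3 + q^9 \succeq 0$. The main obstacle is purely bookkeeping: arranging the $f = 0$ subcases of $\phi$ to produce pairwise-distinct frequencies of $2$ under the tight weight-$4$ constraint, and then verifying that the finitely many negative $a_n$ sit in the right places within their residue classes so that only $b_3$ and $b_9$ remain negative.
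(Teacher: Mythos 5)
Your proposal is correct and follows essentially the same route as the paper: an injection from $D_{4,2}$ to $C^*_{4,2}$ for large $N$ distinguished by the frequency of $2$ (even for $f_4\geq 1$, distinct odd values in the $f_4=0$ subcases), a finite machine check of the small coefficients, and the residue-class analysis of $b_n=\sum_i a_{n-4i}$. Your minor variations (using $f_6\geq 2$ with image frequency $2^1$ rather than the paper's $f_6\geq 3$ with $2^9$, and the extra out-of-image partition $(2^7,3^{x_N},5^{y_N})$, which is not actually needed here) are harmless.
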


\begin{proof}
For $N > 20 $, we construct an injective map $$ \psi : \{\pi \in D_{4,2} : |\pi| =
N\} \rightarrow \{\pi \in C^*_{4,2} : |\pi| = N \}.$$
Recall that partitions $\pi \in D_{4,2}$ have the form $\pi = (3^{f_3},
4^{f_4}, 5^{f_5}, 6^{f_6})$, whereas partitions in $C^*_{4,2}$ must have a
part of size 2 and cannot have a part of size 4.  We have cases depending on the frequency
of $4$ in $\pi$.  We describe the map $\psi$ fully and then show it is injective later.

Case $1$: Suppose $f_4 \geq 1$. Then define $$ \psi(\pi) = (2^{2f_4},
3^{f_3}, 4^0, 5^{f_5}, 6^{f_6}).$$

Case $2$: Suppose $f_4 = 0$. Then $\pi = (3^{f_3}, 5^{f_5}, 6^{f_6})$.  Since $N > 20 $, either $f_3 \geq 2$, $f_5 \geq 2$ or $f_6 \geq 3$. We consider subcases based on these conditions. 

Case $2$(i): Suppose $f_3 \geq 2$. Then define $$ \psi(\pi) = (2^3,
3^{f_3-2}, 4^0, 5^{f_5}, 6^{f_6}).$$

Case $2$(ii): Suppose $f_3 \leq 1$ and $f_5 \geq 2$. Then define $$ \psi(\pi)
= (2^5, 3^{f_3}, 4^0, 5^{f_5-2}, 6^{f_6}).$$

Case $2$(iii): Suppose $f_3 \leq 1$, $f_5 \leq 1$ and $f_6 \geq 3$. Then 
define $$ \psi(\pi) = (2^9, 3^{f_3}, 4^0, 5^{f_5}, 6^{f_6-3}).$$

It is easy to see that $\psi$ is injective in each case.  To see that $\psi$ is injective
overall, notice that the frequency of $2$ in partitions in the image distinguishes cases.  In
Case 1, the frequency of 2 is even, and in the other cases the frequency of 2 is 3, 5 or
9.  Thus, $\psi$ is injective.

Let $H_{4,2,4}(q) = \sum_{n \geq 0} a_n q^n$ and $G_{4,2}(q) = \sum_{n \geq 0}
b_n q^n$. Then $$b_n = a_n + a_{n-4} + a_{n-8} + \cdots.$$ The injectivity of
$\psi$ shows that $a_n \geq 0$ for all $n > 20$. For $n \leq 20$, $a_n$ can be
calculated easily using a computer (or by hand).  It can be 
verified that $a_n$ is negative only when $n$ is either $3$, $6$ or $9$, and in
all of these cases, $a_n = -1$. Table \ref{tab:tab7} contains the values of $a_n$ for $n \leq
13$. 
\begin{table}
    \centering
    \begin{tabular}{|c|c|c|c|c|c|c|c|c|c|c|c|c|c|c|c|c|c|c|c|}
        \hline
        $n$  &  $0$  &  $1$ &  $2$  &  $3$ &  $4$ &  $5$ &  $6$  &  $7$ &  $8$
             &  $9$ &  $10$ &  $11$ &  $12$ &  $13$  \\
        \hline
        $a_n$  &  $0$  &  $0$ &  $1$  &  $-1$ &  $0$ &  $0$ &  $-1$  &  $1$ &
        $1$  &  $-1$ &  $1$ &  $1$ &  $0$ &  $2$  \\
        \hline
    \end{tabular}
    \caption{The values $a_n$ in Theorem \ref{thm:lequal4} for $0 \leq n \leq
    13$.}
    \label{tab:tab7}
\end{table}

From Table \ref{tab:tab7} and the fact that $a_n$ is negative only when $n$ is either $3$, $6$ or $9$ (and in all of these cases, $a_n = -1$), we get $b_n \geq 0$ whenever $n \neq 3$ and $n \neq 9$. Also, $b_3 = -1$ and $b_9 =-1$. This proves that $G_{4,2}(q)+q^3 + q^9 \succeq 0$.

\end{proof}

\section*{Acknowledgements}

We thank the reviewers for their encouraging comments and their many helpful
suggestions.

\bibliographystyle{halpha}
\bibliography{Integer}

\end{document}